\long\def\@savemarbox#1#2{\global\setbox#1\vtop{\hsize\marginparwidth 
  \@parboxrestore\tiny\raggedright #2}}
\newcommand\lref[1]{\ref{#1}%
\@ifundefined{r@DisplaY #1}{}{ (#1)}}
\newcommand\fakelabel[2]{\@bsphack\if@filesw {\let\thepage\relax
   \newcommand\protect{\noexpand\noexpand\noexpand}%
\xdef\@gtempa{\write\@auxout{\string
      \newlabel{#1}{{#2}{\thepage}}}}}\@gtempa
   \if@nobreak \ifvmode\nobreak\fi\fi\fi\@esphack}
\def\SL@margintext#1{{\showlabelsetlabel{\tiny\{\SL@prlabelname{#1}\}}}}
\def\Empty{}
\newcommand\oplabel[1]{
  \def\OpArg{#1} \ifx \OpArg\Empty {} \else
        \label{#1}
  \fi}
\newtheorem{theoremSt}{Theorem}[section]
\newtheorem{exampleSt}[theoremSt]{Example}
\newtheorem{exerciseSt}[theoremSt]{Exercise}
\newcommand\MakeStEnv[1]{
  \newenvironment{#1}[1]{
  \begin{#1St} \oplabel{##1}%
  \global\def\CrntSt{\thetheoremSt}%
}{ 
  \end{#1St} }
  \newenvironment{#1+}[1]{
  \begin{#1St} \label{##1}%
  \label{DisplaY ##1}%
  \global\def\CrntSt{\thetheoremSt}%
  \def\Labl{##1}\ifx\Labl\Empty{} \else {\em (\Labl)\,}\fi%
}{ 
  \end{#1St} }
}
\newcommand\restate[3]{
\medskip\par\noindent
{\bf #1 \ref{#2}} 
{\it #3}
\par\medskip
}
\newlength{\saveu}
\newenvironment{pf*}[1]{%
 \begin{proof}[#1]%
}{ 
 \end{proof}
}
\newcommand{\finishproof}[1]{ 
  \def\FPArg{#1}
  \ifx\FPArg\Empty
        \newcommand\FPArg{\CrntSt}  \fi
  \smallbreak\noindent\makebox[\textwidth]{\hfill\fbox{\FPArg}}
  \medbreak\noindent
}
\newcommand\AAA{{\mathcal A}}
\newcommand\CC{{\mathcal C}}
\newcommand\EE{{\mathcal E}}
\newcommand\FF{{\mathcal F}}
\newcommand\HH{{\mathcal H}}
\newcommand\LL{{\mathcal L}}
\newcommand\MM{{\mathcal M}}
\newcommand\NN{{\mathcal N}}
\newcommand\PP{{\mathcal P}}
\newcommand\UU{{\mathcal U}}
\newcommand\CH{{\CC\HH}}
\newcommand\PMF{{\PP\kern-2pt\MM\FF}}
\newcommand\PML{{\PP\kern-2pt\MM\LL}}
\newcommand\Area{\operatorname{Area}}
\newcommand\ep{\epsilon}
\newcommand\hhat{\widehat}
\newcommand\union{\cup}
\newcommand\intersect{\cap}
\newcommand\bbR{{\mathord{\text{I\kern-2pt R}}}}        
\newcommand\bbH{{\mathord{\text{I\kern-2pt H}}}}        
\newcommand\C{{\mathbb C}}
\newcommand\Z{{\mathbb Z}}
\newcommand\R{{\mathbb R}}
\newcommand\Hyp{{\mathbb H}}
\newcommand\PSL[1]{\text{PSL}_{#1}}
\newcommand\bigrightarrow[1]{\hbox to #1{\rightarrowfill}}
\newcommand\bigleftarrow[1]{\hbox to #1{\leftarrowfill}}
\newcommand\boundary{\partial}
\newcommand\semidir{\mathrel{\hbox{\vrule depth-.03ex height1.1ex\kern-0.15em$\times$}}}
\newcommand\del{\nabla}
\newcommand\til{\widetilde}
\newcommand{\ssm}{\setminus}
\newcommand{\diam}{\operatorname{diam}}
\renewcommand{\Im}{\operatorname{Im}}
\numberwithin{equation}{section}
\def\subsection{\@startsection{subsection}{2}%
  \z@{.5\linespacing\@plus.7\linespacing}{.5em}%
  {\normalfont\bfseries\centering}}
\def\section{\@startsection{section}{1}%
  \z@{.7\linespacing\@plus\linespacing}{.5\linespacing}%
  {\normalfont\large\bfseries\centering}}
\def\subsubsection{\@startsection{subsubsection}{3}%
  \z@{.5\linespacing\@plus.7\linespacing}{-.5em}%
  {\normalfont\bfseries}}
\newcommand{\inj}{\operatorname{inj}}
\newcommand{\collar}{\operatorname{\mathbf{collar}}}
\newcommand{\topprec}{\prec_{\rm top}}
\newcommand{\fsubd}{\mathrel{{\scriptstyle\searrow}\kern-1ex^d\kern0.5ex}}
\newcommand{\bsubd}{\mathrel{{\scriptstyle\swarrow}\kern-1.6ex^d\kern0.8ex}}
\newcommand{\fsubeq}{\mathrel{\raise-.7ex\hbox{$\overset{\searrow}{=}$}}}
\newcommand{\bsubeq}{\mathrel{\raise-.7ex\hbox{$\overset{\swarrow}{=}$}}}
\newcommand{\base}{\operatorname{base}}
\newcommand{\bbar}{\overline}
\newcommand{\EL}{\mathcal{EL}}
\newcommand{\tsh}[1]{\left\{\kern-.9ex\left\{#1\right\}\kern-.9ex\right\}}
\newcommand\MT{{\mathbb T}}
\newcommand\Teich{{\mathcal T}}
\newcommand{\id}{{\operatorname{id}}}
\newcommand{\cE}{{\mathcal E}}
\begin{document}

\title{Convergence properties of end invariants}

\author[J. Brock]{Jeffrey F. Brock}
\address{Brown University}
\author[K. Bromberg]{Kenneth W. Bromberg}
\address{University of Utah}
\author[R. Canary]{Richard D. Canary}
\address{University of Michigan}
\author[Y. Minsky]{Yair N. Minsky}
\address{Yale University}
\date{\today}
\thanks{The authors gratefully acknowledge support from the National
  Science Foundation}

\begin{abstract}
We prove a continuity property for ending invariants of convergent
sequences of Kleinian surface groups. We also analyze the bounded curve
sets of such groups and show that their projections to non-annular subsurfaces
lie a bounded Hausdorff distance from geodesics joining the projections
of the ending invariants.
\end{abstract}

\maketitle



\newcommand\epzero{\ep_0}
\newcommand\epone{\ep_1}
\newcommand\epotal{\ep_{\rm u}}
\newcommand\kotal{k_{\rm u}}
\newcommand\Kmodel{K_0}
\newcommand\Kone{K_1}
\newcommand\Ktwo{K_2}
\newcommand\bdry{\partial} 
\newcommand\stab{\operatorname{stab}}
\newcommand\nslices[2]{#2|_{#1}}
\newcommand\ME{M\kern-4pt E}
\newcommand\bME{\overline{M\kern-4pt E}}
\renewcommand\del{\partial}
\newcommand\s{{\mathbf s}}
\newcommand\pp{{\mathbf p}}
\newcommand\qq{{\mathbf q}}
\newcommand\uu{{\mathbf u}}
\newcommand\vv{{\mathbf v}}
\newcommand\zero{{\mathbf 0}}
\newcommand{\cB}{{\mathcal B}}
\newcommand\mm{\operatorname{\mathbf m}}
\newcommand{\dehntw}{\theta}  
\newcommand{\hull}{\operatorname{hull}}
\renewcommand{\CH}{{\mathcal C}\kern-2pt\mathit{H}}
\newcommand{\thd}{{\mathbf d}|}
\newcommand{\thdiam}{\mathbf{diam}|}
\newcommand{\core}{{\mathcal K}}

\section{Introduction}

The solution \cite{ELC1,ELC2} of Thurston's Ending Lamination
Conjecture (together with that of Marden's Tameness Conjecture
\cite{agol,calegari-gabai}) gives a complete classification of
finitely-generated Kleinian groups in terms of their topological type
and their {\em end invariants}. This classification leaves an
incomplete picture, however, because it does not describe the {\em
  topology} of the deformation space of hyperbolic structures
associated to a given group (with the natural topology induced from
representation spaces). In particular, the end invariant data does not
vary continuously with deformations in any of the usual topologies
that have arisen historically \cite{brock-invariants,AC-pages}. 
Moreover, such deformation spaces can fail to be locally connected \cite{bromberg-PT,magid}.
In this article, we describe how end invariants do converge in limiting
families of hyperbolic structures. In the process, we produce a number of important
structural refinements to the geometric picture developed in
\cite{ELC1,ELC2}.

We restrict ourselves to {\em Kleinian surface groups}, which are
discrete, faithful representations $\rho:\pi_1(S)\to \PSL 2(\C)$ where
$S$ is an oriented compact surface (a parabolicity condition is
imposed on $\boundary S$ if it is nonempty).  Let $AH(S)$ denote the
space of conjugacy classes of such representations, viewed as a subset
of the $\PSL 2(\C)$ character variety of $\pi_1(S)$.  The end
invariants of $[\rho]\in AH(S)$ are a pair of data $\nu^\pm(\rho)$,
each a union of marked Riemann surface structures and geodesic laminations
supported on essential subsurfaces of $S$ (see \S\ref{back} for
details). The orientation of $S$ and of the quotient manifold $N_\rho
= \Hyp^3/\Im(\rho)$ give $N_\rho$ a ``top'' and ``bottom'' side or
{\em end}, with asymptotic geometry encoded by $\nu^+$ and $\nu^-$,
respectively.

\subsubsection*{Limits of projections of end invariants}
The primary objective of \cite{ELC1,ELC2}, as well as their precursors
\cite{minskyPT,minsky:kgcc}, is to obtain coarse information about
$N_\rho$ using the projections of $\nu^+$ and $\nu^-$ to the
 {\em curve complexes} $\CC(W)$ where \hbox{$W\subseteq S$} denotes an
essential subsurface of $S$. Let $\pi_W(\nu^\pm)$ denote these
projections. (We emphasize that we allow the possibility that $W=S$.) 
We recall that $\CC(W)$ is a $\delta$-hyperbolic metric
space \cite{masur-minsky}, and that (for $W$ nonannular) its Gromov
boundary can be identified with $\EL(W)$, the space of {\em unmeasured
filling laminations} in $W$ \cite{klarreich}. Moreover $\EL(W)$ is also
the set of laminations that can occur as components of the end
invariants $\nu^\pm$ supported on non-annular $W$. Our first theorem
describes a sense in which the end invariants in a convergent
sequence of representations can be said to converge, establishing a
continuity property for the projections of end invariants to
subsurfaces.

\begin{theorem}{limits}
Let $\rho_n\to\rho$ in $AH(S)$.
If $W \subseteq S$ is an essential subsurface of $S$, other than an annulus or a pair of pants,
and $\lambda\in\EL(W)$ is a lamination supported on $W$, the following statements are
equivalent:
\begin{enumerate}
\item
$\lambda$ is a component of $\nu^+(\rho)$.
\item
$\{\pi_W(\nu^+(\rho_n))\}$ converges to $\lambda$
\end{enumerate}
Furthermore we have,
\begin{enumerate}[\indent (a)]
\item
if $\{\pi_W(\nu^+(\rho_n))\}$ accumulates on $\lambda \in \EL(W)$ then it
  converges to $\lambda$,
\item
the sequences $\{\nu^+(\rho_n)\}$ and $\{\nu^-(\rho_n)\}$ do not converge to a common
$\lambda \in \EL(S)$, and
\item
if $W \subsetneq S$ is a proper subsurface then convergence of
  $\{\pi_W(\nu^+(\rho_n))\}$ to $\lambda \in \EL(W)$ implies
  $\{\pi_W(\nu^-(\rho_n))\}$ does not accumulate on $\EL(W)$.
\end{enumerate} 
The same statements hold with ``$+$'' replaced by  ``$-$''.

\end{theorem}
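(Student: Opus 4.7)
The plan is to invoke the bilipschitz model manifold theorem of \cite{ELC1,ELC2}, which builds a model $M_\rho$ for $N_\rho$ from the end invariants and whose geometry is captured combinatorially by hierarchies of curve-complex arcs. The Masur--Minsky subsurface projection machinery records $\pi_W(\nu^\pm(\rho))$ as the combinatorial position of the $W$-geodesic in the hierarchy, and in particular these projections escape every bounded set in $\CC(W)$ precisely when $W$ supports an ending lamination component. Combined with the fact that algebraic convergence $\rho_n \to \rho$ yields, after extraction, a geometric limit that covers $N_\rho$ and bilipschitz convergence on compact cores, the theorem should reduce to a persistence statement: combinatorial features deep in the end of the model survive in any algebraic limit, and can only vanish by being absorbed into the end invariants of the limit.

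For $(1) \Rightarrow (2)$, I would choose a sequence $\gamma_k \in \CC(W)$ with $\gamma_k \to \lambda$ realized as short curves on level surfaces escaping the $W$-end of $N_\rho$. Geometric convergence then makes $\gamma_k$ short and deep in the top-end region of $N_{\rho_n}$ for $n$ large (depending on $k$), and the hierarchy consequences of \cite{ELC2} force $\pi_W(\nu^+(\rho_n))$ to lie within uniform $\CC(W)$-distance of $\gamma_k$; a diagonal argument then gives $\pi_W(\nu^+(\rho_n)) \to \lambda$. For $(2) \Rightarrow (1)$, divergence of $\pi_W(\nu^+(\rho_n))$ produces in each $M_{\rho_n}$ a stack of short curves escaping the top of the $W$-region, and these survive geometrically to $N_\rho$. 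The only place to house them is as an ending lamination component on $W$ in $\nu^+(\rho)$, and uniqueness of limits in the Gromov boundary of $\CC(W)$ identifies this component with $\lambda$.

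Statements (a)--(c) then follow from the equivalence together with uniqueness facts: (a) says that any $\EL(W)$-accumulation of $\pi_W(\nu^+(\rho_n))$ is a component of $\nu^+(\rho)$ on $W$, which is unique, so accumulation implies convergence; (b) uses that $\nu^+(\rho)$ and $\nu^-(\rho)$ cannot share an $\EL(S)$-component, since the top and bottom ends of $N_\rho$ carry distinct asymptotic data; and (c) is the proper-subsurface analog, reflecting that a single $W \subsetneq S$ cannot simultaneously support top and bottom ending laminations in $N_\rho$. The principal technical obstacle I foresee is the bookkeeping required when $W$ changes combinatorial role along the sequence --- for example, when $\partial W$ becomes parabolic only in the limit, or when hierarchy subintervals associated to $W$ in $M_{\rho_n}$ are absorbed into neighbouring subsurfaces in $M_\rho$. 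Handling this cleanly will require exploiting the uniform bilipschitz constants in the model theorem together with the structure of the bounded curve set analyzed elsewhere in the paper.
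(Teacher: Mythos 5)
Your outline captures the general framework (geometric limits, the bilipschitz model, hierarchies, and the bounded curve set), but it passes over the single point that carries essentially all of the difficulty: deciding \emph{which} end invariant of the approximates the lamination is attached to. In your $(1)\Rightarrow(2)$ step you say the curves $\gamma_k$ become ``short and deep in the top-end region of $N_{\rho_n}$'' and that hierarchy machinery then forces $\pi_W(\nu^+(\rho_n))$ to be close to $\gamma_k$; but shortness and depth only tell you (via Theorem~\ref{bounded} and Lemma~\ref{curves near geodesic}) that the projections of these curves lie near $\hull_W(\nu^\pm(\rho_n))$ --- they do not tell you they lie near the $\nu^+_n$ end of that geodesic rather than the $\nu^-_n$ end. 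That is exactly the ``flipped ends'' phenomenon, and it is where the paper does its real work: one pulls back a long $W$-product region via the Covering Theorem and comparison maps, uses Lemma~\ref{pants hierarchy} to produce a fixed hierarchy curve $\alpha$ anchored low in the region and hierarchy curves $\beta_n\to\lambda$ high in it, verifies $U(\alpha)\topprec U(\beta_n)$ using unknottedness (Lemma~\ref{unknotted subsurface}) and orientation-preservation of $\phi_n$ and the model map, and only then converts topological order into order along the hierarchy geodesic $k_n$ via Lemma~\ref{top order and geodesic}, so that Lemma~\ref{large link} pins the terminal vertex, hence $\pi_W(\nu_n^+)$, near $\lambda$. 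Without some argument of this kind your diagonal argument proves only that $\lambda$ is an accumulation point of $\pi_W(\nu_n^+)\cup\pi_W(\nu_n^-)$.

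The same gap recurs in your $(2)\Rightarrow(1)$ step: ``the only place to house them is as an ending lamination component on $W$ in $\nu^+(\rho)$'' assumes precisely that the end so produced is upward-pointing. (Also, short curves in $N_{\rho_n}$ survive into the \emph{geometric} limit; transferring the conclusion to the algebraic limit $N_\rho$ needs an argument --- the paper instead uses continuity of length to show $\lambda$ is unrealizable in $\rho$, hence an ending lamination of some end based on $W$.) The paper rules out the downward-pointing alternative by first proving Lemma~\ref{moreover} (via Theorem~\ref{bounded}: a fixed curve on $W$ has bounded $\rho_n$-length, so $\pi_W(\nu_n^+)$ and $\pi_W(\nu_n^-)$ cannot both converge to $\lambda$), and then applying the already-established implication $(1)\Rightarrow(2)$ for the bottom invariant to get a contradiction. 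Your statements (a)--(c) are then handled in essentially the paper's way, except that in (b) you appeal to distinctness of the two ending laminations of $N_\rho$ and in (c) to the impossibility of $W$ supporting both a top and a bottom degenerate end; the latter needs the relative compact core argument (no two annuli of $P^+\cup P^-$ are isotopic in $S\times I$), and both still depend on the equivalence whose proof, as it stands in your proposal, has the unresolved top-versus-bottom identification at its core.
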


We remark that Ohshika has obtained a similar result in \cite[Theorem
2]{ohshika-divergence}, phrased in the equivalent language of
Hausdorff limits. One can make the hybrid objects $\nu^\pm(\rho_n)$ into laminations by replacing each Riemann surface component of $\nu^\pm(\rho_n)$ with bounded length pants decompositions on the associated hyperbolic metric. We then let $\lambda^\pm$ denote the Hausdorff limit of these sequences.
The statement that 
$\pi_W(\nu^+(\rho_n))$ converges to $\lambda\in\EL(W)$ is then equivalent
to the condition that
$\lambda^+$ contains $\lambda$ as a component. 

This convergence behavior was presaged in the examples of
\cite{brock-invariants} for representations in a {\em Bers slice}, and
those examples also indicate 
how the Hausdorff topology on end invariants {\em
  necessarily} fails to predict the full end-invariant of the limit.
In particular, one might hope to find that the parabolic components of
$\nu^\pm(\rho)$  always arise either as components of the Hausdorff
limits $\lambda^+$ or $\lambda^-$, or as
boundary components of 
subsurfaces $W$ filled by components $\lambda$ of these Hausdorff
limits. However, in
\cite{brock-invariants} examples are given in which parabolic curves
in the limiting invariants are not related to the Hausdorff limit in
either of these ways.

In the other direction, the phenomenon of {\em wrapping} explored in
detail in \cite{AC-pages} gives examples in which {\em both} Hausdorff limits $\lambda^+$ and $\lambda^-$
contain the same curve as a component, but the curve can only appear
in the end invariant of one side or the other in the limit.

We do not address these subtleties here, but note that all parabolics
and ending laminations are predicted in full by recording the full
collection of subsurfaces for which such projections diverge, or
equivalently, by studying the sequence of {\em hierarchies} associated
to the end invariants. The precise behavior and its connection to end
invariants will be described in \cite{BBCL}.

Theorem~\ref{limits}, together with Theorem~\ref{bounded} below, is
used in our related paper \cite{nobumping} to analyze (and rule out)
``bumping'' phenomena on the boundary of $AH(S)$, and in particular to
identify boundary points where $AH(S)$ is locally connected.
Theorem~\ref{limits} will also be applied, together with Theorem
\ref{topistop}, in \cite{BBCL} which
gives a complete characterization, in terms of end invariants, of
convergence and divergence of sequences of Kleinian surface groups.

\medskip

\subsubsection*{Controlling the bounded curve sets}
The second theme of this paper involves improving our understanding of
the {\em bounded curve sets} associated to a Kleinian surface group.
In \cite{ELC1,ELC2} we applied the notion of a {\em hierarchy of
  geodesics} as developed in Masur-Minsky \cite{masur-minsky2}. This
combinatorial device connects the two end invariants with a family of
{\em markings}, curve systems on $S$, in a combinatorially efficient
way.  A crucial step in \cite{ELC1,ELC2} is to establish
a-priori bounds on the geodesic lengths of all simple closed curves that
appear in such a hierarchy.  On the other hand one can simply ask to
understand the full set
$$\CC(\rho,L) = \{ \alpha \in \CC(S) : \ell_\rho(\alpha) \le L \}$$
of simple closed curves in $S$ whose $\rho$-length is bounded by $L$
(for a given $L$). Our second theorem gives a description of this set
in terms of its {\em subsurface projections}.  We denote by
$\hull_W(\nu^+,\nu^-)$ the union of geodesics in $\CC(W)$ connecting
$\pi_W(\nu^+)$ to $\pi_W(\nu^-)$. (Hyperbolicity of $\CC(W)$ implies
that this union lies in a uniform neighborhood of any one of its
members). The set of curves appearing in the hierarchy has the
property that its projections into each $\CC(W)$ lie in
$\hull_W(\nu^\pm)$. The next theorem shows that the same holds for the
bounded curve set. Let $d_{\rm Haus}$ denote Hausdorff distance for
subsets of a metric space, applied below to $\CC(W)$.
We also use $d_W(x,y)$ as an abbreviation for $d_{\CC(W)}(\pi_W(x),\pi_W(y))$.

\begin{theorem}{bounded}{}
Given $S$, there exists $L_0$ such that for all $L\ge L_0$ there exists
$D=D(S,L)$, such that given $\rho\in AH(S)$ with end invariants $\nu^\pm$ and an essential
subsurface $W\subset S$  which is not an annulus or a pair of pants, 
$$
d_{\rm Haus}\left(\pi_W(\CC(\rho,L)) , \hull_W(\nu^\pm(\rho))\right) \le D.
$$
Moreover, if $d_W(\nu^+(\rho),\nu^-(\rho)) > D$ then 
$\CC(\rho,L) \intersect\CC(W)$ is nonempty and 
$$
d_{\rm Haus}\left(\CC(\rho,L)\intersect\CC(W) , \hull_W(\nu^\pm(\rho))\right) \le D.
$$
\end{theorem}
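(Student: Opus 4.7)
The plan is to deduce Theorem \ref{bounded} by combining the combinatorial machinery of hierarchies from Masur-Minsky \cite{masur-minsky2} with the Bounded Geometry Theorem of \cite{ELC1, ELC2}, which provides a universal constant $L_0 = L_0(S)$ such that every vertex curve appearing in a hierarchy $H$ between $\nu^-(\rho)$ and $\nu^+(\rho)$ has $\rho$-length at most $L_0$. I will take this constant as the $L_0$ in the theorem statement and argue for all $L \ge L_0$.

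For the direction that $\hull_W(\nu^\pm(\rho))$ lies in a $D$-neighborhood of $\pi_W(\CC(\rho,L))$: by the subsurface projection results of \cite{masur-minsky2}, the $\pi_W$-images of the vertex curves of $H$ form a set within bounded Hausdorff distance of $\hull_W(\nu^\pm(\rho))$ in $\CC(W)$. Since every such vertex has $\rho$-length at most $L_0 \le L$, these curves belong to $\CC(\rho,L)$, so their $W$-projections sit inside $\pi_W(\CC(\rho,L))$. This gives one side of the Hausdorff bound.

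For the reverse direction, given $\alpha \in \CC(\rho,L)$, I need to show $\pi_W(\alpha)$ is within bounded distance of $\hull_W(\nu^\pm(\rho))$. The strategy is to show every bounded-length curve is coarsely close in $\CC(W)$ to some vertex of $H$. Using the Model Manifold theorem of \cite{ELC1, ELC2}, a curve $\alpha$ with $\ell_\rho(\alpha)\le L$ pulls back to a curve of bounded length in the model manifold $M_\nu$; such a curve lies in (or close to) a specific block or tube of the model, and its combinatorial position determines a hierarchy vertex $\beta$ with $d_W(\alpha,\beta)$ uniformly bounded. Since hierarchy vertices project near $\hull_W(\nu^\pm(\rho))$ by the previous paragraph, the claim follows. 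For the ``moreover'' part, if $d_W(\nu^+(\rho),\nu^-(\rho)) > D$ with $D$ large, then by the structure of hierarchies there is a geodesic of $H$ supported in $W$, contributing actual curves in $\CC(W)$ (not merely projections) of bounded $\rho$-length, so $\CC(\rho,L) \cap \CC(W)\neq\emptyset$; the Hausdorff estimate inside $\CC(W)$ then follows by applying the argument above to curves in $W$, using that on $W$ the projection $\pi_W$ is essentially the identity.

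The main obstacle is the reverse inclusion: while the Bounded Geometry Theorem of \cite{ELC1,ELC2} tells us that hierarchy curves have bounded length, the statement here requires the converse-type control for \emph{all} bounded-length curves, not just those appearing in the hierarchy. Establishing this requires exploiting the model manifold in both directions to translate the metric condition $\ell_\rho(\alpha)\le L$ into the combinatorial statement that $\alpha$ is uniformly close in the relevant curve complexes to a vertex of $H$, rather than merely constructing hierarchy curves with controlled length.
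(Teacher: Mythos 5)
Your overall architecture matches the paper's: take $L_0$ at least the a-priori length bound on hierarchy curves (the paper also needs $L_0\ge L_B$, the Bers constant, so that $\pi_W(\nu^\pm)$ meets the closure of $\pi_W(\CC(\rho,L))$), get the inclusion $\hull_W(\nu^\pm)\subset \NN_D(\pi_W(\CC(\rho,L)))$ from Lemma \ref{curves near geodesic} together with Theorem \ref{bilipschitz model}, and get the ``moreover'' statement from the large link lemma \ref{large link} plus the length bound on the vertices of $k_W$. Those parts are fine.

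The genuine gap is in the reverse inclusion, and you have in fact pointed at it yourself without filling it. The step ``a curve $\alpha$ with $\ell_\rho(\alpha)\le L$ pulls back to a bounded-length curve in the model; its combinatorial position determines a hierarchy vertex $\beta$ with $d_W(\alpha,\beta)$ uniformly bounded'' is an assertion, not an argument, and it is exactly the new content of the theorem. Geometric proximity in the model (lying in or near a given block, tube, or split-level surface) does not by itself control a subsurface projection distance: $d_W(\alpha,\beta)$ is bounded when, say, $\alpha$ and $\beta$ can be realized with bounded length on a common bounded-geometry surface (so that their intersection number, and hence $d_W$, is bounded), and nothing in your sketch produces such a common surface or any substitute for it. Moreover, the hierarchy vertices located near $\alpha$ in the model need not overlap $W$ at all, so one must also handle the case in which no hierarchy curve near $\alpha$ (indeed, possibly no hierarchy curve in $\CC(W)$) is available to compare with.

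The paper's mechanism for this is Lemma \ref{hierarchy system}: realize $\alpha$ by a pleated surface $f:X\to N_\rho$, and find on $X$ a maximal system $\Gamma$ of hierarchy curves of uniformly bounded $X$-length, with the extra property that any complementary component $W'$ containing no hierarchy curves carries a bounded-length curve $\beta$ with $\diam\,\pi_{W'}(\beta\cup\CH)$ bounded. Then either some $\gamma\in\Gamma$ overlaps $W$, and the bounded lengths of $\alpha$ and $\gamma$ on the common hyperbolic surface $X$ bound $i(\alpha,\gamma)$ and hence $d_W(\alpha,\gamma)$, after which Lemma \ref{curves near geodesic} places $\pi_W(\alpha)$ near $\hull_W(\nu^\pm)$; or $W$ is disjoint from $\Gamma$, in which case (using $\partial W\subset\CH$, which follows from Theorem \ref{kgccfact} when $\diam\pi_W(\CC(\rho,L))$ is large) $W$ is itself such a complementary component and the curve $\beta$ from the lemma plays the role of the comparison curve. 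Lemma \ref{hierarchy system} is proved by a geometric-limit compactness argument and is where the real work lies; without it, or an equivalent statement bounding $d_W$ between arbitrary bounded-length curves and the hierarchy, your reverse inclusion does not go through.
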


Our third theorem relates the projections of bounded-length curves to
their topological ordering in the manifold (in the sense described in
\S \ref{ordering}). 
It states that when the geodesic representative $\alpha^*$ of a
curve $\alpha\in\CC(\rho,L)$ lies above the geodesic representative $\beta^*$ of some component  $\beta$ of
the boundary of a subsurface $W$ which it overlaps, then its projection to $\CC(W)$ is
uniformly close to $\pi_W(\nu^+)$. (We recall that $\alpha^*$ lies above $\beta^*$ if $\alpha^*$ can be pushed arbitrarily
far upward, in the complement of $\beta^*$,  in the product structure on $N_\rho\cong S\times \R$.)
This property follows directly from
the machinery of \cite{ELC2} in the case of curves that arise in the
hierarchy of $N_\rho$ (see Lemma \ref{top order and geodesic}).

\begin{theorem}{topistop}
Given $S$ and $L>0$ there exists $c$ such that, given
$\rho\in AH(S)$, an essential subsurface
$W\subset S$ which is not  a pair of pants, 
and a curve $\alpha\in\CC(\rho,L)$ such that
$\alpha^*$ lies above the geodesic representative of some
component of $\partial W$ that it overlaps, then
$$d_W(\alpha,\nu^+(\rho))\le c.$$
Furthermore, if $W$ is not an annulus
or a pair of pants, $\alpha\in\CC(\rho,L)$ overlaps $\partial W$, and
$$ d_W(\alpha,\nu^-) > c$$
then $\alpha^*$ lies above the geodesic representative of every component of $\partial W$
that it overlaps.

The same holds when replacing ``above'' with ``below'' and $\nu^+$
with $\nu^-$.
\end{theorem}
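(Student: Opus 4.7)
The plan is to reduce to the hierarchy-curve case, where Lemma~\ref{top order and geodesic} already supplies the desired correspondence, using Theorem~\ref{bounded} as the bridge to general bounded-length curves. Given $\alpha \in \CC(\rho,L)$, first I would apply Theorem~\ref{bounded} to place $\pi_W(\alpha)$ within uniform Hausdorff distance $D$ of $\hull_W(\nu^\pm(\rho))$, so that in the $\delta$-hyperbolic curve complex $\CC(W)$ the projection sits essentially on the geodesic from $\pi_W(\nu^-(\rho))$ to $\pi_W(\nu^+(\rho))$. Assuming for contradiction that $d_W(\alpha,\nu^+(\rho)) > c$ for a constant $c$ to be fixed, thin-triangle geometry forces $\pi_W(\alpha)$ to lie deep on the $\nu^-$-side of the hull. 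The case where $W$ is an annulus requires a parallel argument using the integer structure on $\CC(W)$ and the control on twisting coming from the geometry of the Margulis tube around the core of $W$.

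Next, I would produce a hierarchy curve $\beta$ with $d_W(\beta,\alpha)$ uniformly bounded. The hierarchy from $\nu^-$ to $\nu^+$ of \cite{ELC1,ELC2} contains a tight geodesic in $\CC(W)$ whose vertices have uniformly bounded $\rho$-length by the a priori length bounds of \cite{ELC1}, and $\delta$-hyperbolicity lets me choose a vertex $\beta$ close to $\pi_W(\alpha)$. Applying Lemma~\ref{top order and geodesic} to this hierarchy curve, the large value of $d_W(\beta,\nu^+(\rho))$ forces $\beta^*$ not to lie above any component of $\partial W$ it overlaps, in particular not above the component $\beta_0$ that $\alpha^*$ is assumed to lie above.

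The main obstacle, and the final step of the first statement, is promoting this topological conclusion from $\beta$ back to $\alpha$. The issue is that two curves with nearby $W$-projections could a priori occupy different strata of the product structure $N_\rho \cong S \times \R$ relative to $\beta_0^*$. I would handle this using the model manifold geometry of \cite{ELC2}: if $d_W(\alpha,\beta)$ is small and both curves have $\rho$-length at most $L$, then they are realized in nearby blocks of the model slab supported on $W$, and must therefore sit on the same side of the Margulis tube around $\beta_0^*$ in the product structure. This forces $\beta^*$ also to lie above $\beta_0^*$, contradicting the previous paragraph and closing the argument.

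For the second statement I would proceed by contrapositive. Suppose $d_W(\alpha,\nu^-(\rho)) > c$ and $\alpha$ overlaps a component $\beta_0$ of $\partial W$. If $\alpha^*$ is not above $\beta_0^*$, it is either below---in which case the ``below'' version of the first statement (with $\nu^-$ in place of $\nu^+$) forces $d_W(\alpha,\nu^-(\rho)) \le c$, contradicting the hypothesis---or $\alpha^*$ and $\beta_0^*$ are linked in $N_\rho$. The linked case I would rule out by noting that a bounded-length curve linked with a short geodesic has its $W$-projection controlled by $\rho$-geometry alone, hence within bounded $\CC(W)$-distance of both $\pi_W(\nu^\pm(\rho))$; choosing $c$ larger than this bound excludes it and completes the proof.
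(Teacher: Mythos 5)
There is a genuine gap at the step you yourself flag as the main obstacle: ``promoting the conclusion from $\beta$ back to $\alpha$.'' Your assertion that two bounded-length curves with nearby $\CC(W)$-projections ``are realized in nearby blocks of the model slab supported on $W$, and must therefore sit on the same side of the Margulis tube'' is not a consequence of anything in \cite{ELC1,ELC2}, and it is essentially the whole content of the theorem: proximity of projections to a \emph{single} subsurface does not localize geodesic representatives in $N_\rho$, and $\alpha$ need not be a hierarchy curve, so the model gives no tube, no block, and no a priori position for $\alpha^*$. The paper has to earn exactly this transfer, and does so by a different mechanism: realize $\alpha$ by a pleated surface, extract bounded-length hierarchy curves on that surface (Lemma \ref{hierarchy system}), use the long geodesic $k_W$ to build a wide $W$-product region (Lemmas \ref{W cross sections} and \ref{pants hierarchy}), show the pleated surface cannot cross the middle split-level surface because its thick part has bounded diameter, and only then invoke Lemmas \ref{above and below}, \ref{above or below} and \ref{top order and geodesic}. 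Note also that the vertex $\beta$ you choose on the tight geodesic in $\CC(W)$ is a curve \emph{inside} $W$, hence disjoint from $\partial W$, so part (2) of Lemma \ref{top order and geodesic} (which requires overlap with a component of $\partial W$) does not apply to it; the paper instead compares tubes of curves overlapping $W$ via part (1). For the same reason your treatment of the second statement is incomplete: the trichotomy ``above / below / linked'' is not established, and the claim that a ``linked'' bounded-length curve has $W$-projection close to both $\pi_W(\nu^+)$ and $\pi_W(\nu^-)$ is unsupported; the paper derives the above-or-below dichotomy (Proposition \ref{curve above or below}) from the disjointness of the pleated surface from the middle split-level surface.

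The annular case is essentially missing from your proposal. Theorem \ref{bounded} is stated only for non-annular $W$, and the paper's introduction explains (via the wrapping construction) that its conclusion genuinely fails for annuli, so your first step has no bridge in that case; and ``integer structure plus control on twisting from the Margulis tube'' does not engage with wrapping, which is precisely the obstruction. The paper's annular argument is of a different nature: it produces a bounded-length curve $\beta$ with $d_\gamma(\beta,\nu^+)\le 1$ lying above $\gamma^*$ (Lemma \ref{nearby curve}), builds a special Lipschitz model whose ending data are $\alpha$ and $\beta$ themselves (Lemma \ref{special model}), shows the induced map on the tube boundary has degree zero because both boundary surfaces lie above $\gamma^*$ (Lemma \ref{both above}), and converts degree zero into an upper bound on the meridian length, hence on $d_\gamma(\alpha,\beta)$, via the Lipschitz torus estimate (Lemma \ref{lipschitz_torus}); none of these ingredients, or substitutes for them, appear in your sketch.
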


We note that the conclusion of Theorem \ref{bounded} need not hold
in the case that $W$ is an annulus. It is possible that $N_\rho$ contains a 
bounded geometry pleated surface which is ``wrapped'' several, say $n$, times about
the Margulis tube $\MT(\beta)$ associated to the core curve of $W$. If $\alpha$ is a
curve on the pleated surface of bounded length, say $L$, which overlaps $\beta$, then $\alpha$ may be 
concatenated with $n$ copies of the meridian of the Margulis tube of $\beta$ 
to obtain a curve $\alpha'$ of length roughly $L+nC$ whose geodesic representative 
lies above or below $\beta^*$. Moreover, $d_W(\alpha,\alpha')$ is roughly $n d_W(\nu^+,\nu^-)$.
For any given value of $n$, one may construct families of examples where $d_W(\nu^+,\nu^-)$ is arbitrarily large, but
one may make uniform choices of $L$ and $C$. 
(This wrapping construction was introduced in \cite {AC-pages}, see also \cite[Lemma A.4]{mcmullen-ce} or
\cite{canary-bump}.)

\subsubsection*{Outline of the paper}
In Section \ref{back} we review background on curve complexes,
hierarchies, Kleinian surface groups and their end invariants. We also
review some material from our previous work in \cite{ELC1,ELC2},
particularly the structure of model manifolds associated to
hierarchies, and some consequences: In \ref{ordering}, and
particularly Lemma \ref{top order and geodesic}, we discuss the
relationship between combinatorial order relations in a hierarchy, and
its connection to a topological ordering in the corresponding
3-manifold.  In subsection \ref{thick product} we discuss $W$-product
regions, which are submanifolds of either the model manifold or the
hyperbolic manifold which are homeomorphic to $W\times [0,1]$ (for
some subsurface $W$) and so that $\partial W\times [0,1]$ is
identified with a submanifold of the boundaries of the tubes
associated to $\partial W$.  Lemma \ref{W cross sections} provides
criteria on a hierarchy that imply the existence of ``large''
$W$-product regions in the associated 3-manifolds.

In Section \ref{curve systems} we study the question of which
curves from a hierarchy are ``visible'' in a 
pleated surface (or any Lipschitz surface)
in a Kleinian surface group. Lemma \ref{hierarchy system} provides a
bounded-length system of hierarchy curves in every such surface,
satisfying some additional bounded-projection properties. This lemma
plays a central role in each of the main theorems. 

In Section \ref{bounded curve set} we prove Theorem \ref{bounded}. The
main new ingredient here is provided by Lemma \ref{hierarchy system}. 

In Section \ref{endlams} we prove Theorem \ref{limits}. We remark that
the principal difficulty in the proof involves showing that a
component of the limiting lamination corresponding to the top
invariants of a sequence is in fact a top invariant for the limit, and
not a bottom invariant in the limit.  The issue of such possible
``flipped ends'' in the limit has a long history in this subject,
arising first in the work of Thurston on strong limits of
quasifuchsian groups.  It arises in our proof in \cite{ELC2} of the
bilipschitz model theorem as well, and the relevant arguments there
contain echos of Thurston's original interpolation argument. In the
present paper, we rely primarily on properties of the bilipschitz
model, with Lemma \ref{top order and geodesic} on topological ordering
and Lemma \ref{W cross sections} on the existence of thick product
regions playing a central role.

In Section \ref{overlap} we give the proof of Theorem \ref{topistop}. 
We remark that the conclusion of the theorem is already known
from the properties of hierarchies and models, when the curves in
question are hierarchy curves. Thus we must answer the
question of how close the given curves of bounded length are to being
hierarchy curves, and Lemma \ref{hierarchy system} provides the needed
connection via pleated surfaces. The product region lemma \ref{W cross
  sections} then gives the necessary control of these pleated
surfaces. This argument, for the case of a non-annular surface, is
detailed in \S\ref{overlap nonannular}, whereas for the case of
annuli a fairly different argument is needed, which appears in
\S\ref{overlap annular}.

\section{Background}
\label{back}

\subsection{Curve complexes and laminations}
\label{complexes}

We briefly recall definitions and terminology from \cite{ELC2},
\cite{masur-minsky2} and related papers. We will denote by $\CC(S)$
the curve complex of a surface $S$ of finite type, recalling that it
is a locally infinite complex which is $\delta$-hyperbolic with
respect to a natural path metric \cite{masur-minsky}. Vertices of
$\CC(S)$ are isotopy classes of essential closed curves in $S$, and
simplices correspond to systems of disjoint curves (with a few
standard exceptions). The {\em curve and arc complex} $\AAA(S)$ is
formed similarly, with vertices corresponding to essential properly embedded
arcs (up to isotopy rel boundary)  as well as curves.

Klarreich's theorem \cite{klarreich} states that the Gromov boundary
$\boundary\CC(S)$ is naturally identified with $\EL(S)$, the set of
{\em filling geodesic laminations} in $S$, with topology inherited
from the space of measured laminations. (See \cite{} for background on
Thurston's measured lamination space). 

\subsubsection*{Markings}
A {\em marking} on $S$, in the sense of \cite{masur-minsky2}, is a
system of curves (i.e. a simplex of $\CC(S)$) together with a selection
of {\em transversal curves}, at most one for each curve in the
system. Each transversal  intersects the curve it is associated with
at most two times, and is disjoint from the others. 
The simplex of a marking $\mu$ is denoted $\base(\mu)$. If the
base is a pants decomposition of $S$ and every curve has a transversal
we call the marking {\em complete}. 

A {\em generalized marking} on $S$ is a similar object, except that
$\base(\mu)$ is allowed to have components that are minimal geodesic
laminations, not just simple closed curves. 

\subsubsection*{Subsurface projections}
Given an essential non-annular subsurface $W\subset S$,
there is a natural map $\pi_{\AAA(W)}:
\CC(S)\to \AAA(W)\union\{\emptyset\}$, which assigns to a curve system
in $S$ the barycenter of the span of the components of its essential intersection with $W$ (or
$\emptyset$ if there are none). 

A natural construction takes vertices of $\AAA(W)$ to points in
$\CC(W)$: Given a proper arc (or curve) $a\subset W$ take the
essential components of a 
regular neighborhood of $a\union\boundary W$. Composing this with
$\pi_{\AAA(W)}$ we obtain a map $\pi_W$ which takes vertices of
$\CC(S)$ to (finite sets of) vertices in $\CC(W)$.

For a marking $\mu$ in $S$ we can define $\pi_W(\mu)\subset\CC(W)$ as
the union of $\pi_W(\beta)$ over the curves $\beta$ in $\mu$. The union
has uniformly bounded diameter. For a generalized marking we need to allow $\pi_W$ to take values in $\CC(W) \cup \EL(W)$. If $\mu$ contains a minimal component $\lambda \in \EL(W)$ then $\pi_W(\mu) = \lambda$. If not than as above $\pi_W(\mu)$ is the union of $\pi_W(\beta)$ over the closed curves $\beta$ in $\mu$.

Complexes and projections can be defined for annuli also, with some
care: If $A$ is an annulus and $\gamma$ its core curve, we consider
the annular lift of $S$ associated to $A$, which has a natural
compactification coming from the circle at infinity of $\til
S$. Vertices of $\AAA(S)$ are essential arcs in this annulus, up to
homotopy fixing endpoints.  Given a
curve $\alpha$ in $S$ that crosses an annulus $A$ essentially, lift
$\alpha$ to the annular cover and keep only those components that
cross the annulus (or select one arbitrarily) to obtain
$\pi_A(\alpha)$.

Given generalized markings (or curves) $\alpha$ and $\beta$ which intersect $W$
essentially,  we regularly use the shorthand
$d_W(\alpha,\beta)$ to denote $d_{\CC(W)}(\pi_W(\alpha),\pi_W(\beta))$.
If $\gamma$ is the core of an annulus $A$, we write $d_A$ and
$d_\gamma$ interchangeably.

\subsection{Kleinian surface groups and end invariants}
\label{surface groups}

Let $AH(S)$ denote the space of Kleinian surface groups, i.e. discrete
faithful representations $\rho:\pi_1(S)\to \PSL 2(\C)$ taking
peripheral elements to parabolics, and considered up to conjugacy in
the image. The {\em end invariants} of $\rho\in AH(S)$ are two
hybrid objects $\nu^\pm(\rho)$, each a combination of laminations and
conformal structures on subsurfaces of $S$. We sketch a description
here, referring to \cite{ELC1,ELC2} and the references therein for more details. 

Let $N=N_\rho = \Hyp^3/\rho(\pi_1(S))$ be the quotient 3-manifold, and
let $N^0$ denote $N$ minus the (open) cusp neighborhoods associated to
the parabolic subgroups of $\rho(\pi_1(S))$ (which we note include one
cusp for each component of $\boundary S$). This manifold with boundary
has a {\em relative compact core} $\core\subset N^0$ which meets each cusp
boundary in one core annulus. Thus $\core$ can be identifed with
$S\times[-1,1]$, and $\core\intersect\boundary N^0$ is a union $P$ of
annuli in $\boundary \core$ that includes $\boundary S \times
[-1,1]$. 
We further decompose $P$ into the union $P^+$ of components of $P$
in $\boundary S\times \{1\}$ and the remaining components $P^-$.  This
decomposition has the property that no two annuli in $P$ are pairwise
isotopic in $S \times I$.

The closure of each component $W$ of $\boundary \core \setminus P$
bounds a component $U_W$ of $N^0\setminus \core$, which is a {\em
  neighborhood of an end of $N^0$.} We say that $W$ {\em faces} this
end and vice versa, and there are two possibilities for its geometry:
\begin{itemize}
\item {\em   geometrically finite}: it corresponds to a component
of the boundary at infinity of $N_\rho$, and $W$ inherits a finite-type
conformal structure, i.e. a point in $Teich(W)$. The convex core of
$N$ intersects $U_W$ in a bounded set. 

\item
{\em simply degenerate}: it is described by an {\em ending
  lamination}, which is a filling geodesic lamination in $W$, i.e. an
element of $\EL(W)$. This lamination is the support of the limit (in
Thurston's projective lamination space) of any sequence of curves in
$W$ whose geodesic representatives exit every bounded subset of
$U_W$. 
\end{itemize}
The end invariant $\nu^+(\rho)$ is a list of the following data: The
core curves of the annuli of $P^+$ that lie in $S\times\{+1\}$; the
conformal structures associated to geometrically finite ends facing 
subsurfaces in $S\times \{+1\}\setminus P^+$, and the laminations
associated to simply degenerate 
ends facing subsurfaces in $S\times \{+1\}\setminus P^+$.  The
invariant $\nu^-(\rho)$ is defined similarly; the ends associated to
$\nu^+$ and to $\nu^-$ are called upward-pointing and
downward-pointing, respectively.

\medskip

We recall here Thurston's notion of 
a {\em pleated surface} (or map), which 
is a map $f:X\to N$ where $X$ is a hyperbolic surface and $N$ a
hyperbolic 3-manifold, such that $f$ is length-preserving and totally
geodesic on the strata of a geodesic lamination on $X$. In the setting
of a Kleinian surface group $\rho:\pi_1(S)\to \PSL 2(\C)$, we
typically consider pleated maps with underlying surface $S$, in the
homotopy class determined by $\rho$. We say that such a map {\em
  realizes} a lamination $\lambda$ if it maps the leaves of $\lambda$
geodesically. 

The laminations and parabolic components of the end invariants are
exactly those laminations which are {\em unrealizable} in $\rho$.
So for example if $\nu^+(\rho)$ is a single lamination that fills $S$,
there is no pleated map that carries $\nu^+$ geodesically, and
moreover if $\gamma_n$ is a sequence of closed curves converging to
$\nu^+$ then a sequence of pleated surfaces realizing $\gamma_n$ will
necessarily escape every compact subset of $N_\rho$ and converge to
the end associated to $\nu^+$.

\subsubsection*{End markings}
In order to have a more topological object to work with, in
\cite[Section 7.1]{ELC1} we convert
the end invariants $\nu^\pm$ to a pair of generalized markings
$\mu^\pm$, as follows: for each conformal structure on a subsurface
$W$ we select a minimal-length complete marking on $W$. The union of
these with core curves of the annuli $P^+$ and the
lamination components of $\nu^+$ will be the generalized marking $\mu^+$; define
$\mu^-$ similarly.  Note that the total length of $\base(\mu^\pm)$ is
bounded by the {\em Bers Constant,} $L_B$, which bounds the length of the
minimal curve system in any hyperbolic structure on $S$ \cite{bers-constant}.

With this in mind we can define the projections $\pi_Y(\nu^\pm)=\pi_Y(\mu^\pm)$ for
any essential non-annular subsurface $Y$ which is not  a core curve of a component of the parabolic locus
$P^\pm$.

There is a bit of flexibility in this definition, as the choice of
markings in the geometrically finite subsurfaces may not be
unique. For our purposes this will not matter, as the different
choices have $\pi_Y$ images differing by a uniform amount, 
and moreover convergence conditions of the type $\pi_Y(\nu^+_n)\to
\lambda\in \EL(Y)$ are unaffected by the choices in the definition.

We also record a consequence of these definitions and the basic
properties of pleated surfaces: for an essential subsurface $W$, we have
\begin{equation}\label{mu contained 1}
\pi_W(\mu^\pm) \cap \overline{\pi_W(\CC(\rho,L))}\ne\emptyset
\end{equation}
provided $L$ is at least the Bers constant $L_B$. 
(Recall from the introduction that $\CC(\rho,L)$ is the set of
essential simple closed curves in $S$ whose $\rho$-length is bounded
by $L$).  If $W$ intersects a closed curve component $\beta$ of $\base(\mu)$, then
$l_\rho(\beta)\le L_B$ and so $\pi_W(\beta)\in\pi_W(\CC(\rho,L))$.
If $\lambda$ is a lamination component of $\base(\mu^\pm)$ and $Z={\rm supp}(\mu)$,
then there exists a family of pleated surfaces $\{f_n:X_n\to N_\rho\}$ with base surface
$Z$ which exit the end associated to $\lambda$ (see Bonahon \cite{bonahon}). If we choose
shortest curves $\beta_n$ on $Z_n$, then $l_\rho(\beta_n)\le L_B$, so
$\{\beta_n\}\subset \CC(\rho,L)$ and $\beta_n\to\lambda$.
If $W$ overlaps $Z$, then $\pi_W(\beta_n)\to \pi_W(\lambda)$, so
$$\pi_W(\lambda)\in \pi_W(\mu^\pm) \cap \overline{\pi_W(\CC(\rho,L))}.$$

\subsubsection*{Margulis tubes}

We fix throughout a Margulis constant $\ep_1$ for $\Hyp^3$, which it will be convenient
to take to be the same choice of Margulis constant as in \cite{ELC1} (see page 19) and \cite{ELC2}. 
In particular, this number is sufficiently small that the $\ep_1$-thin part of a
hyperbolic 3-manifold is a disjoint union of cusps and solid-torus
neighborhoods of geodesics.

If $\alpha$ is a curve in $S$ and $\rho$ is a given Kleinian surface group
we let $\MT(\alpha)$ denote the component of the $\ep_1$-thin part $(N_\rho)_{thin(\ep_1)}$
whose core is in the homotopy class of $\alpha$.  If $\ep<\ep_1$, then we define
$$\MT_\ep(\alpha)=\MT(\alpha)\cap (N_\rho)_{thin(\ep)}.$$

\subsection{Hierarchies}
\label{hierarchies}

Given two generalized markings $\mu^+$ and $\mu^-$, we let
$H(\mu^+,\mu^-)$ or $H(\mu^\pm)$ denote the hierarchy connecting them,
in the sense of \cite{masur-minsky2}, \cite{ELC1}  and \cite{ELC2}. 
We also denote this by $H(\nu^\pm)$, if $\mu^\pm$ are obtained from a
pair of end invariants $\nu^\pm$.
We give an
impressionist discussion here, referring the reader to
those three articles for the details. 
A hierarchy is a collection of {\em tight geodesics} supported on
subsurfaces of $S$, and interlocked in a structure that encodes certain
nesting and ordering properties. Each tight geodesic is essentially a
directed geodesic in the curve complex of the subsurface it is supported on. 
We typically denote such a geodesic $k_W$ if $W$ is the supporting
surface, and we let $\iota_W$  and $\tau_W$ denote the initial and
terminal vertices. 

We will use $\CH(\mu^\pm)$, or sometimes $\CH$, to denote
the set of all vertices of $\CC(S)$
which occur in the (non-annular) geodesics in a hierarchy $H(\mu^\pm)$.

A {\em resolution} of $H(\mu^\pm)$ is a (possibly infinite)
sequence of markings $(\mu_n)$, separated by elementary moves, and
connecting $\mu^-$ to $\mu^+$ (in the sense that $\mu_n$ is either
equal to $\mu^+$ for the last $n$, or converges to it as $n\to\infty$
if $\mu^+$ has a lamination component, and similarly for $\mu^-$).
Each marking is composed of curves that occur as vertices in a nested collection
of geodesics of $H$, which is known as a ``slice'' of $H$, and
successive markings are separated by elementary moves, which
correspond in a specific way to forward motion along the geodesics of
$H$.

\subsubsection*{Hierarchies and projections}

We will make crucial use of Lemma 6.2 in \cite{masur-minsky2},
sometimes called the ``large link'' lemma. Given markings $\mu^\pm$
and a subsurface $W\subset S$ we let $\hull_W(\mu^\pm)$ denote a
geodesic in $\CC(W)$ joining $\pi_W(\mu^+)$ to $\pi_W(\mu^-)$
(there may be more than one such geodesic but hyperbolicity implies
that all such choices are within uniform Hausdorff distance of each other).

\begin{lemma}{large link}{}
There exists $A=A(S)$ such that if
$H(\mu^\pm)$ is a hierarchy, 
$W\subset S$ is an essential subsurface, and \hbox{$d_W(\mu^+,\mu^-)>A$},
then $H(\mu^\pm)$ contains a geodesic $k_W$ with
domain $W$ and 
$$
d_{\rm Haus}(k_W,\hull_W(\mu^\pm)) \le A.
$$
Moreover
$$d_W(\tau_W,\mu^+)\le A\ \ \ \ {\rm and}\ \ \ \ \ d_W(\iota_W,\mu^-)\le A$$
where $\tau_W$ and $\iota_W$ are the terminal and initial vertices of $k_W$.
\end{lemma}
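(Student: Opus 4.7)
\textbf{Proof proposal for Lemma \ref{large link}.} The plan is to derive this essentially from the Large Link Lemma of Masur--Minsky (Lemma 6.2 of \cite{masur-minsky2}) together with the $\delta$-hyperbolicity of $\CC(W)$, by choosing the constant $A$ large enough to absorb three independent bounds.

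First I would invoke the existence half of the Large Link Lemma in its Masur--Minsky formulation: there is a constant $A_0=A_0(S)$ with the property that if $d_W(\mu^+,\mu^-)>A_0$ for an essential subsurface $W\subset S$, then $W$ arises as a component domain within the subordinacy structure of $H(\mu^\pm)$, and hence $H(\mu^\pm)$ contains a tight geodesic $k_W$ supported on $W$. This is exactly the content for which the lemma is named, and I would take it as a black box from \cite{masur-minsky2}.

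Next I would establish the endpoint bounds, i.e.\ $d_W(\tau_W,\mu^+)\le A_1$ and $d_W(\iota_W,\mu^-)\le A_1$ for some $A_1=A_1(S)$. By construction of $H(\mu^\pm)$, the terminal vertex $\tau_W$ of $k_W$ is determined by descending through the chain of forward-subordinate geodesics from the top geodesic $g_H$ (whose endpoint is $\mu^+$) down to $k_W$; at each step the transition vertex projects into $W$ to within a uniformly bounded distance from the next, by the tightness condition and a routine induction along the subordinacy chain. Summing the uniform bound over the (bounded-depth) chain gives $d_W(\tau_W,\mu^+)\le A_1$. The argument for $\iota_W$ and $\mu^-$ is symmetric.

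Finally I would combine these with hyperbolicity. The set $\hull_W(\mu^\pm)$ is by definition a $\CC(W)$-geodesic from $\pi_W(\mu^-)$ to $\pi_W(\mu^+)$, and $k_W$ is a $\CC(W)$-geodesic from $\iota_W$ to $\tau_W$. Since the two pairs of endpoints lie within distance $A_1$ of each other, and $\CC(W)$ is $\delta$-hyperbolic with $\delta$ depending only on $S$ (in fact not at all), stability of geodesics produces a constant $A_2=A_2(S)$ with
\[
d_{\rm Haus}(k_W,\hull_W(\mu^\pm))\le A_2.
\]
Setting $A:=\max(A_0,A_1,A_2)$ gives all three conclusions simultaneously.

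The main obstacle, such as it is, is the bookkeeping in the second step: one must trace the footprint of $\mu^+$ down the subordinacy chain from $g_H$ to $k_W$ and verify that the projection error accumulates only a uniform amount, independent of the length of the chain. This is where the tightness hypothesis on the geodesics of $H$ plays its essential role, since it is what prevents drift of the projections across successive subsurfaces. Once this is set up, the rest is formal use of hyperbolicity, and no new ideas beyond \cite{masur-minsky2} are needed.
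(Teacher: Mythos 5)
Your proposal is correct and takes essentially the same route as the paper: the paper offers no independent argument, simply citing Masur--Minsky's Lemma 6.2, which already contains both the existence of $k_W$ and the endpoint bounds $d_W(\tau_W,\mu^+),d_W(\iota_W,\mu^-)\le A$ that you re-derive by tracing the subordinacy chain (so your second step could just be a citation), after which the Hausdorff bound follows from hyperbolicity of $\CC(W)$ exactly as you say. The only caveat worth recording is that $\mu^\pm$ are generalized markings, so $\pi_W(\mu^\pm)$ may lie in $\EL(W)=\partial\CC(W)$ and $k_W$ may be infinite, in which case the stability-of-geodesics step must be applied to rays or bi-infinite geodesics with endpoints at infinity (still standard, via Klarreich's identification of $\partial\CC(W)$ with $\EL(W)$).
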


In fact the first inequality of Lemma \ref{large link} can be strengthened to something
that holds in the setting where a geodesic $k_W$ may not necessarily
exist: 
\begin{lemma}{curves near geodesic}
Given $S$ there exists $M$, such that for any pair of generalized
markings and any essential $W\subset S$, 
$$
d_{\rm Haus}\left(  \pi_W(\CH(\mu^\pm)), \  \hull_W(\mu^\pm) \right)
\le M.
$$
\end{lemma}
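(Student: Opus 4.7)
The plan is to verify both inclusions defining the Hausdorff bound.

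First, we check that $\hull_W(\mu^\pm)$ lies in a uniform neighborhood of $\pi_W(\CH(\mu^\pm))$. If $d_W(\mu^+,\mu^-) > A$, where $A$ is the constant from Lemma \ref{large link}, the geodesic $k_W$ produced there has vertices in $\CH(\mu^\pm)$ and Hausdorff distance at most $A$ from $\hull_W(\mu^\pm)$. If instead $d_W(\mu^+,\mu^-) \le A$, then $\hull_W(\mu^\pm)$ has diameter at most $A$, and it suffices to observe that $\pi_W(\mu^\pm)$ lies in the closure of $\pi_W(\CH(\mu^\pm))$. Indeed, closed-curve components of $\base(\mu^\pm)$ appear as initial or terminal vertices of hierarchy geodesics, while lamination components of $\mu^\pm$ are accumulation points of vertex sequences along infinite geodesics of $H(\mu^\pm)$ supported on the appropriate subsurfaces (using the Klarreich identification of $\partial\CC(W)$ with $\EL(W)$).

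For the reverse inclusion, fix $v \in \CH(\mu^\pm)$ with $\pi_W(v)$ defined, and let $k_V$ be a tight geodesic of $H(\mu^\pm)$ carrying $v$. When $V = W$, Lemma \ref{large link} places the endpoints $\iota_V, \tau_V$ of $k_V$ within $A$ of $\pi_W(\mu^-)$ and $\pi_W(\mu^+)$ respectively, so $\delta$-hyperbolicity of $\CC(W)$ keeps every vertex of $k_V$, in particular $v$, within a uniform constant of $\hull_W(\mu^\pm)$. When $V$ essentially overlaps $W$ but $V \ne W$, we invoke the Bounded Geodesic Image Theorem of Masur--Minsky \cite{masur-minsky2}: if every vertex of $k_V$ intersects $W$ essentially, then $\diam_{\CC(W)}\pi_W(k_V) \le M_0$ for a uniform $M_0$, reducing the problem to bounding $d_W(\iota_V, \hull_W(\mu^\pm))$ and $d_W(\tau_V, \hull_W(\mu^\pm))$. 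This is carried out inductively by walking up the subordinate chain of $H(\mu^\pm)$: each step either re-applies BGI on the parent geodesic or, when that parent has a vertex disjoint from $W$, supplies a hierarchy geodesic supported on a subsurface containing $W$ to which Lemma \ref{large link} applies and transfers the bound.

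The main obstacle is the combinatorial bookkeeping when a geodesic $k_V$ itself contains a vertex disjoint from $W$. BGI then applies only to the two segments of $k_V$ cut off by this disjoint vertex, and the jump in projection between the segments must be absorbed by a hierarchy geodesic $k_U$ with $W \subseteq U$; the disjointness forces $d_U(\mu^+,\mu^-)$ to be large enough to produce such $k_U$ via Lemma \ref{large link}. Combining the large-$d_W$ regime handled directly by Lemma \ref{large link} with the small-$d_W$ regime in which $\hull_W(\mu^\pm)$ is itself bounded, and tracking constants through at most finitely many levels of the hierarchy tree, yields a single uniform constant $M = M(S)$.
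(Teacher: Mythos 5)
You should know at the outset that the paper does not actually prove this lemma: it is quoted as established in the proof of Lemma 5.14 of \cite{ELC1}, as a consequence of Lemmas 6.1 and 6.9 of \cite{masur-minsky2}, the same machinery that underlies Lemma \ref{large link}. Your sketch tries to reconstruct that machinery. The first inclusion is fine: when $d_W(\mu^+,\mu^-)$ is large, Lemma \ref{large link} puts a hierarchy geodesic $k_W$ within bounded Hausdorff distance of $\hull_W(\mu^\pm)$, and when it is small one only needs that $\pi_W(\mu^\pm)$ is coarsely contained in $\pi_W(\CH(\mu^\pm))$, which holds because base curves of $\mu^\pm$ occur in slices of the hierarchy (note, though, that your large-distance argument tacitly assumes $W$ is non-annular, since $\CH$ records only vertices of non-annular geodesics).

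The genuine gap is in the reverse inclusion, at the decisive inductive step. When a geodesic in the subordinacy chain has a simplex $v$ disjoint from $W$, you claim that ``the disjointness forces $d_U(\mu^+,\mu^-)$ to be large enough to produce such $k_U$ via Lemma \ref{large link}'' for a domain $U\supseteq W$, and that this ``transfers the bound.'' Neither half is justified. The jump of $\pi_W$ across $v$ is a priori measured against the initial and terminal data of the geodesic supported on the component domain $U$ of $(D(k_V),v)$; that geodesic exists by the structure theory of complete hierarchies, not via Lemma \ref{large link}, and the implication ``large jump across $v$ $\Rightarrow$ $d_U(\mu^+,\mu^-)$ large'' is itself an instance of the projection control you are trying to prove, since the hierarchy defines $I(k_U),T(k_U)$ combinatorially from $k_V$ and not from $\mu^\pm$. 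More importantly, even granted a geodesic $k_U$ lying within bounded Hausdorff distance of $\hull_U(\mu^\pm)$ \emph{in} $\CC(U)$, that gives no control of subsurface projections to $W\subsetneq U$: what the induction actually needs is that $\pi_W$ of the terminal (resp.\ initial) data of every geodesic in the forward (resp.\ backward) subordinacy chain stays uniformly close to $\pi_W(\mu^+)$ (resp.\ $\pi_W(\mu^-)$). That statement is precisely Lemma 6.1 of \cite{masur-minsky2} (with Lemma 6.9 supplying the link between arbitrary hierarchy vertices, via slices of a resolution, and the controlled sequence of segments through $k_W$); as written, your step assumes it rather than proves it. The BGI-plus-subordinacy strategy is the right one---it is exactly what the cited lemmas carry out---but the proposal is circular at this point, and the efficient correct write-up is the paper's: cite the proof of Lemma 5.14 of \cite{ELC1} and Lemmas 6.1 and 6.9 of \cite{masur-minsky2}.
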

This result, which is established in the proof of Lemma 5.14 of
\cite{ELC1}, follows from Lemmas 6.1 and 6.9 in \cite{masur-minsky2},
which are part of the same machinery used in the proof of the large
link lemma.

\subsection{Model Manifolds}
\label{models}

To each hierarchy $H=H(\nu^\pm)$  we associate (in \cite{ELC1}) a {\em
  model manifold} $M=M(\nu^\pm)$, which is
  equipped with an orientation-preserving embedding into $S\times\R$ (which
  we treat as inclusion), 
  a path metric and a disjoint collection of
  {\em tubes}, one for each vertex of $H$.  The tube associated to
  $v\in\CH$ is an open solid torus of the form
  $U(v) \equiv \collar(v)\times I$ where $\collar(v)\subset S$ is an annulus whose core is $v$,
and $I$ is an interval (sometimes infinite). 
Each tube $U(v)$ is isometric to a standard Margulis tube (possibly
parabolic, for finitely many of the $v$).  Let $\UU\subset M$ denote
the union of all the tubes. The complement, $M\setminus \UU$,
decomposes into a union of {\em blocks}, which (with the exception of
a bounded number of {\em boundary blocks}) are submanifolds that
fall into a fixed finite number of isometry classes. The boundary of
each block is a union of annuli on tube boundaries and {\em level 3-holed spheres},
where the latter have the form $Y\times\{t\}$,
for a three-holed sphere $Y\subset S$ obtained as a complementary
component of $S\setminus\collar(\Gamma)$ for a curve system
$\Gamma$. (The boundary blocks, whose structure is slightly more
complicated, are all adjacent to the boundary of $M$, if any, and will
not affect the rest of our arguments).

The model contains a collection of {\em split-level surfaces}, each
associated to markings 
or partial markings that occur in resolutions.
Suppose $\mu$ is such a marking, restricted to a subsurface $W$ (so
that $\boundary W\subset \base(\mu)$ and $\base(\mu)$ determines a
pants decomposition of $W$).
The split-level surface $F_\mu\subset M\setminus\UU$ is
a disjoint union of level three-holed spheres $Y\times\{t_Y\}$,
where $Y$ runs over the components of $W\setminus
\collar(\base(\mu))$. Each three-holed sphere is properly embedded in
$(M\setminus\UU,\boundary\UU)$, and in the induced metric they are all
isometric to a single standard 3-holed sphere. Moreover, if $F_Y$ intersects
a tube $U(v)$, then $F_Y\cap U(v)$ is a geodesic in the metric on
$\partial U(V)$.

An {\em extended} split-level surface $\hhat F_\mu$ is obtained from
$F_\mu$ by adding, for every $v$ in $\base(\mu)\intersect int(W)$,  an annulus in
the corresponding tube $U(v)$. These annuli are identified with the
corresponding collars in a way that extends the identification of
$W\setminus\collar(\base(\mu))$ with $F_\mu$ to an identification of
$W$ with $\hhat F_\mu$. In particular
$\hhat F_\mu$ is an isotope of $W\times\{0\}$.

The annulus in each $U(v)$ is chosen so that it has a $CAT(-1)$
metric: If $U(v)$ is the Margulis tube with geodesic core then this
can be done by extending the boundaries of the annuli radially to the
core, and if $U(v)$ is parabolic we can simply rule the annulus by
geodesics connecting the boundaries. 

If the domain surface $W$ of an (extended) split-level surface is all
of $S$ we call it maximal.

\medskip

The maximal extended split-level surfaces $\hhat F_{\mu_n}$ associated to a
resolution are isotopes of $S\times\{0\}$ and are {\em monotonically
arranged} in the sense that the transition from $\hhat F_{\mu_n}$ to
$\hhat F_{\mu_{n+1}}$ always involves isotoping a subsurface upward in the
  $\R$ direction of $S\times\R$. This provides a connection between 
topological ordering in $M$ and the directionality of the hierarchy,
aspects of which we will state more precisely below.

\subsubsection*{Bilipschitz model map}

The main theorem of \cite{ELC2} provides a bilipschitz homeomorphism
between the model manifold associated to the end invariants of a
hyperbolic 3-manifold $N$, and the {\em augmented convex core} of $N$,
denoted $\hhat C_N$. This is the union of a 1-neighborhood of the
convex hull of $N$ with the thin part of $N$. (An extension of this
theorem gives a model that covers all of $N$, but we will not need
it). We give here a statement that combines this bilipschitz map with
other structural facts derived in that and related papers:

\begin{theorem}{bilipschitz model}{}
Given $S$, there exists $K_h>1,$ $\epsilon_h>0$  and $L_h>0$ such that,  if 
$\rho\in AH(S)$ has end
invariants $\nu^\pm$, then 
\begin{enumerate}
\item \label{model map}
There exists a $K_h$-bilipschitz orientation-preserving
homeomorphism
$h:M(\nu^+,\nu^-)\to \hhat C_{N_\rho}$,
\item \label{length bound}
$$\CH(\nu^\pm) \subset \CC(\rho,L_h).
$$
\item \label{short curves} 
$$\CC(\rho,\ep_h) \subset\CH(\nu^\pm)$$
\item \label{tubes to tubes} If  $l(\alpha)<\epsilon_h$, then 
$$h(U(\alpha))=\MT(\alpha).$$
\end{enumerate}
\end{theorem}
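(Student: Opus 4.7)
The statement assembles the outcome of the Ending Lamination Conjecture, so the plan is to extract each clause from the machinery of \cite{ELC1,ELC2} and closely related work, rather than attempting a fresh proof. First I would invoke \cite{ELC1} to construct the model manifold $M(\nu^\pm)$ from the hierarchy $H(\nu^\pm)$: the block decomposition, the tubes $U(v)$ for $v \in \CH(\nu^\pm)$, the split-level surfaces $F_\mu$, and the orientation-preserving embedding $M \hookrightarrow S \times \R$ all come directly from that construction. The monotone arrangement of maximal extended split-level surfaces gives the orientation convention needed for $h$.

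Next I would establish the length bound in clause (\ref{length bound}): every curve appearing as a vertex of a geodesic of $H(\nu^\pm)$ has $\rho$-length at most $L_h$. The plan is to build, for each such vertex $v$, a pleated (or simplicial hyperbolic) surface realizing a short marking that contains $v$, and use the bounded combinatorial structure of successive slices together with the Efficiency-of-Pleated-Surfaces estimates from \cite{minskyPT,minsky:kgcc} to control the geodesic length. The delicate point — and historically the main obstacle — is controlling length near the thin parts; this is handled by the drilling/filling machinery that allows one to compare geometry across tubes, giving uniform bounds regardless of how deep the tubes penetrate.

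For clause (\ref{model map}), the bilipschitz map $h$, the plan is to first define a Lipschitz map $M \to \hat C_{N_\rho}$ block-by-block using the finitely many block isometry types together with the length bounds from (\ref{length bound}) to map the tubes $U(v)$ into the corresponding Margulis tubes $\MT(v)$ via standard-tube-to-standard-tube identifications. One then upgrades to bilipschitz by the gluing theorem from \cite{ELC2}: away from the tubes the argument uses the fact that the thick part of each block has bounded geometry and the pleated surfaces bounding blocks are uniformly controlled, while across the tubes one uses the uniformization of Margulis tubes and a matching of boundary twisting. This is the main obstacle, since it requires simultaneous control of geometry at every scale and involves the interpolation arguments reminiscent of Thurston's original work on strong limits.

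Finally, clauses (\ref{short curves}) and (\ref{tubes to tubes}) should be derived as corollaries. For (\ref{short curves}), given $\alpha$ with $\ell_\rho(\alpha) < \epsilon_h$, one chooses $\epsilon_h$ small enough that the bilipschitz $h$ forces $\alpha$ to lie inside a tube $h(U(v)) = \MT(v)$; comparison of core curves then identifies $\alpha$ with some $v \in \CH(\nu^\pm)$. This uses the short curve classification from \cite{minsky:kgcc} and its extension in \cite{ELC1}, together with the fact that the only components of the $\ep_h$-thin part are the standard tubes of the model. Clause (\ref{tubes to tubes}) follows by choosing $\epsilon_h$ so that, for $\ell(\alpha) < \ep_h$, the model tube $U(\alpha)$ is so long and thin that its $K_h$-bilipschitz image must coincide with the Margulis tube $\MT(\alpha)$ on the nose, a standard consequence of the Margulis lemma applied on both sides of $h$.
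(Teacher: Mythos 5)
Your proposal matches the paper's treatment: the paper gives no independent proof of this theorem, presenting it precisely as an assembly of the main results of \cite{ELC1} and \cite{ELC2} (with the remark that part (2) is Lemma 7.9 of \cite{ELC1}, established within the proof of part (1)), which is exactly the source structure and order of argument you describe. One small caution: the exact equality $h(U(\alpha))=\MT(\alpha)$ in clause (4) is part of the statement and construction of the model map in \cite{ELC2} (tubes of short curves are mapped onto the corresponding Margulis tubes by construction), rather than something obtainable ``on the nose'' from $K_h$-bilipschitzness and the Margulis lemma alone, which would only give coincidence up to bounded distance; since you invoke that construction anyway, this is a matter of phrasing rather than a gap.
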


\medskip\noindent
{\bf Remark:} Part (\ref{length bound}) is a formal consequence of
(\ref{model map}), but is established (Lemma 7.9 in \cite{ELC1}) as
part of the proof of (\ref{model map}).

\medskip

An important additional feature of the model manifold is that, for any resolution $(\mu_n)$, 
every point of $M\setminus\UU$ lies within uniformly bounded
distance of at least one  split level surface $\hhat F_{\mu_n}$, since
every block intersects some $\hhat F_{\mu_n}$. 

\begin{lemma}{split level nearby}
There exists $c_0>0$ such that if $S$ is a compact surface,
$\rho\in AH(S)$ has end invariants $\nu^\pm$ with associated model
manifold $M=M(\nu^\pm)$ and $(\mu_n)$ is a resolution sequence of the associated
hierarchy $H=H(\nu^\pm)$, then if
$x\in M\setminus\UU$, there exists $n$ such that 
$$d(x,F_{\mu_n})<c_0.$$
\end{lemma}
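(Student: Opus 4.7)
The plan is to exploit the structural correspondence between the blocks of $M\setminus\UU$ and the elementary moves in the resolution $(\mu_n)$, together with the uniform geometric control on blocks.

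First, I recall from the model construction in \cite{ELC1} that $M\setminus\UU$ decomposes into a disjoint union of blocks, with the (uniformly bounded number of) boundary blocks adjacent to $\partial M$ being exceptional and the interior blocks falling into a fixed finite collection of isometry classes. In particular there is a uniform constant $D=D(S)$ such that every block $B$ has $\diam(B)\le D$. (For the boundary blocks this uses that they too have bounded complexity, with geometry controlled by the data of the end invariants in a uniformly bounded way.) So for any $x\in M\setminus\UU$, writing $B$ for the block containing $x$, it suffices to produce one resolution marking $\mu_n$ such that $F_{\mu_n}\cap B\neq\emptyset$, since then $d(x,F_{\mu_n})\le D$.

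The main step is to produce such an $n$ for every block $B$. The relevant fact is the one-to-one correspondence between elementary moves in a resolution and blocks of $M\setminus\UU$: for each $n$, the transition from $\mu_n$ to $\mu_{n+1}$ is a single elementary move on some geodesic of the hierarchy, and the corresponding geometric transition from $\hat F_{\mu_n}$ to $\hat F_{\mu_{n+1}}$ sweeps across exactly one block $B_n$ in the $\R$-direction of $S\times\R$. Moreover, the two split-level surfaces $F_{\mu_n}$ and $F_{\mu_{n+1}}$ share level three-holed spheres forming the complement of the move in $\partial B_n$, so both meet $\partial B_n$. Since every interior block arises as some such $B_n$, the claim holds for interior blocks; the finitely many boundary blocks are treated using the initial or terminal markings in the resolution, which are adjacent to them.

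Taking $c_0=D$ (or the maximum of $D$ and the diameters of the boundary blocks) completes the argument.

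The main obstacle is verifying the block-to-move correspondence in the needed form: one needs to know that the resolution, viewed geometrically as a sequence of split-level surfaces monotonically sweeping from the bottom to the top of $M$, actually touches every block, with no block left ``between'' consecutive resolution steps. This is precisely the content of the resolution construction and the monotonicity property recalled in subsection~\ref{models} following Lemma~\ref{large link}, and is carried out in detail in Section~8 of \cite{ELC1}; once one has it, the diameter bound finishes the proof immediately.
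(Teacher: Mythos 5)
Your argument is essentially the paper's own justification: the paper states the lemma with exactly this reasoning, namely that every block of $M\setminus\UU$ intersects some split-level surface $\hhat F_{\mu_n}$ of the resolution (via the block--elementary-move correspondence from the construction in \cite{ELC1}), and that blocks have uniformly controlled geometry, so the bounded-diameter observation finishes the proof. The only point treated more casually than it deserves, both by you and by the paper, is the boundary blocks; your parenthetical remark that their intersection with $M\setminus\UU$ is uniformly bounded is the right thing to say, and it holds because all sufficiently short curves of the boundary conformal structures appear in the end markings and hence have tubes in $\UU$.
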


\medskip

Let us also record the following useful fact, relating the appearance
of short curves in $N_\rho$ with high subsurface projections.
\begin{theorem}{kgccfact}{(Theorem B in \cite{minsky:kgcc})} 
Given a surface $S$,  $\epsilon>0$
and $L>0$, there exists $K=K(S,\epsilon,L)$ such that if $\rho\in AH(S)$ and $W$ is  an
essential subsurface of $S$, then $l_\rho(\partial W)<\epsilon$ if
${\rm diam}(\pi_W(C(\rho,L)))\ge K$.
\end{theorem}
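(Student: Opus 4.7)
My plan is to prove the contrapositive: assuming $l_\rho(\partial W) \ge \epsilon$, I will bound $\diam(\pi_W(\CC(\rho,L)))$ by some $K = K(S,\epsilon,L)$. The key geometric idea is that when $\partial W$ is not short in $N_\rho$, any pleated surface realizing $\partial W$ has bounded geometry in a definite neighborhood of $\partial W$, and this control translates into control of curve-complex projections.

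First, fix a pleated surface $f_0 \colon X_0 \to N_\rho$ realizing $\partial W$ (which exists since the hypothesis forces $\partial W$ to be non-parabolic and hence realizable) and a Bers marking $\mu_0$ on $X_0|_W$ of total length at most $B = B(S,\epsilon)$. It then suffices to prove $d_W(\alpha,\mu_0) \le K/2$ for every $\alpha \in \CC(\rho,L)$ that overlaps $W$. For such an $\alpha$, extend $\alpha \cup \partial W$ to a maximal geodesic lamination on $S$ and realize it by a pleated surface $f_\alpha \colon X_\alpha \to N_\rho$, so that $l_{X_\alpha}(\alpha) \le L$ and $l_{X_\alpha}(\partial W) = l_\rho(\partial W) \ge \epsilon$.

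On $X_\alpha$, the subsurface $W$ has boundary length at least $\epsilon$, so it carries a Bers marking $\mu_\alpha$ of total length at most $B$. The collar lemma bounds the $\epsilon$-collar of $\partial W$ in $X_\alpha$ uniformly, so $X_\alpha|_W$ has injectivity radius bounded below outside a definite collar, and a direct intersection-number estimate for curves of length $\le L$ against curves of length $\le B$ in the thick part produces $i(\alpha,\mu_\alpha) \le I(S,\epsilon,L)$. Standard curve-complex estimates then yield $d_W(\alpha,\mu_\alpha) \le C_1(S,\epsilon,L)$.

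The remaining step, which I expect to be the main obstacle, is to bound $d_W(\mu_\alpha,\mu_0)$ independently of $\alpha$. Both $\mu_\alpha$ and $\mu_0$ are curves in $N_\rho$ of length at most $B$ lying on pleated surfaces that realize the common geodesic $(\partial W)^*$. The idea is a compactness / geometric-limit argument: if a sequence $\alpha_n$ produced $d_W(\mu_{\alpha_n},\mu_0) \to \infty$, then extracting a geometric limit of the pleated surfaces $f_{\alpha_n}$ based at $(\partial W)^*$ — where the uniform lower bound $l_\rho(\partial W) \ge \epsilon$ provides a uniform injectivity-radius lower bound on a definite tubular neighborhood — would force the markings $\mu_{\alpha_n}$ to subconverge geometrically to a bounded-length marking in the limit, contradicting the divergence in $\CC(W)$. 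The subtlety is that different pleating laminations extending $\partial W$ can bend very differently, so one must carefully use the thick structure near $(\partial W)^*$ guaranteed by the hypothesis to confine the possible shortest markings on $W$ to a compact subset of $\CC(W)$ up to bounded ambiguity. Combining the three bounds via the triangle inequality gives the desired $K$.
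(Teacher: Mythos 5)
First, note what you are being measured against: the paper gives no proof of this statement at all---it is quoted verbatim as Theorem B of \cite{minsky:kgcc}---so the relevant comparison is with Minsky's proof there, which is a uniform, quantitative pleated-surface argument rather than a compactness argument. Your proposal has a genuine gap already in its second step. Under the contrapositive hypothesis you only know $\ell_\rho(\partial W)\ge\epsilon$; there is no upper bound, and the typical case is that $\partial W$ is very long. A subsurface with long geodesic boundary need not carry any short non-peripheral closed curve: for the symmetric one-holed torus with boundary length $\ell$, the trace identity $x^2+y^2+z^2-xyz=2-2\cosh(\ell/2)$ forces every non-peripheral simple closed geodesic to have length on the order of $\ell/3$. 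So neither $\mu_0$ nor $\mu_\alpha$ exists with total length bounded by a constant $B(S,\epsilon)$, and the intersection-number estimate built on them collapses. What \emph{is} uniformly bounded on such a pleated surface is the shortest essential arc of $W$, and this is exactly what Minsky uses (Lemma 4.1 of \cite{minsky:kgcc}, also quoted in the proof of Lemma \ref{hierarchy system} in this paper); converting arcs into bounded-length closed curves in $\CC(W)$ requires an \emph{upper} bound on $\ell(\partial W)$, which you do not have.

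The more serious gap is uniformity in $\rho$. Your contradiction argument fixes $\rho$ (and $f_0$, $\mu_0$) and varies only $\alpha$, so at best it yields a constant depending on $\rho$; but the entire content of the theorem is that $K$ depends only on $(S,\epsilon,L)$, uniformly over $AH(S)$ and over $W$---and the paper uses it precisely in that uniform form, applying it along sequences $\rho_n$ (e.g.\ in Lemma \ref{moreover} and Theorem \ref{limits}), where a $\rho$-dependent constant is useless. To argue by contradiction you would have to let $\rho_n$, $W_n$, $\alpha_n$ vary simultaneously, and then the geometric-limit step does not close as sketched: the claimed ``uniform injectivity-radius lower bound near $(\partial W)^*$'' is false when $\partial W$ is long, since the geodesic may dive through Margulis tubes of other short curves; the comparison maps to a geometric limit need not respect the marking by $S$ (this is the wrapping phenomenon discussed in the introduction); and geometric convergence of the images of bounded-length curves does not determine their isotopy classes in $W_n$, so no contradiction with divergence in $\CC(W_n)$ follows. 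Minsky's actual proof avoids limits entirely: it bounds the $\CC(W)$-distance between the arc systems realized on the two pleated surfaces by a direct quantitative interpolation of pleated surfaces during which $\partial W$ is assumed to remain $\epsilon$-thick, and that is where the uniform constant $K(S,\epsilon,L)$ comes from. Salvaging your outline would require either reproducing that quantitative argument or a substantially more delicate limiting scheme than the one you describe.
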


\subsection{Ordering}
\label{ordering}

In a product $S\times \R$ there is a natural notion of topological
ordering induced by the projection $q:S\times\R \to \R$ to the second
factor. The details are however slightly messy so we take some care
with the definitions. 

Given two maps $f:A\to S\times\R$ and $g:B\to S\times\R$, we say that
$f$ lies {\em above} $g$ if $f$ extends to a map
$F:A\times[0,\infty)\to S\times\R$ such that $F(\cdot,0) = f$, the
  image of $F$ is disjoint from $g(B)$, and $q\circ F(\cdot,t)$ goes uniformly
  to $+\infty$ as $t\to+\infty$. We define {\em below} similarly with
  $+\infty$ replaced by $-\infty$.  If $g$ lies above $f$, $f$ lies
  below $g$, and the opposite statements are false, we write
  $f\topprec g$ (in spite of the notation, however, this relation is
  not a partial order). We will also apply this terminology to subsets
  of $S\times\R$ where the map is presumed to be the inclusion map. 

If $A$ and $B$ are subsets of $S$ and $f, g$ are homotopic to the
inclusions $A\to A\times\{0\}$, $B\to B\times\{0\}$, then we say that 
$f$ and $g$ {\em overlap} if $A$ and $B$ intersect essentially
(i.e. cannot be made disjoint by isotopy). Note that 
in this case if $f$ lies above $g$ then $g$ cannot lie above $f$, and
so on. If $f$ and $g$ are overlapping {\em level embeddings}, i.e. of the form
$a\mapsto (a,t)$ and $b\mapsto (b,s)$, then $f \topprec g$ if and only
if $t<s$. 

This notion of ordering can usefully be applied to the tubes in a
model manifold, where it is closely related to the ordering of the
geodesics in the hierarchy. (For an extensive discussion of topological ordering
and its relationship to the hierarchy, see sections 3 and 4 of \cite{ELC2}.)

Given a geodesic $g$ in $\CC(W)$, let $\pi_g$ denote the composition
of the projection $\pi_W$ with a nearest-point projection $\CC(W) \to
g$. 

For a directed geodesic $g$, we can fix an orientation-preserving
identification with an interval of $\Z$, so that addition makes
sense, and $a < b$ means $a$ occurs earlier than $b$.
This lemma describes the relation between
topological order of tubes in a model manifold, and the order of
projections along hierarchy geodesics. 

\begin{lemma}{top order and geodesic}
Let $H=H(\mu^\pm)$ be a hierarchy  and $M=M(\mu^\pm)$ the associated model
manifold. Suppose that $k$ is a geodesic in $H$, supported in a non-annular
$W\subset S$. There is a constant
$r=r(S)$ such that
\begin{enumerate}
\item
For any two vertices $u,v\in \CH(\mu^\pm)$ that overlap $W$ and each other,
$$
U(u) \topprec U(v) \implies    \pi_k(u) \le \pi_k(v) + r.
$$
\item
If $\gamma\in\CH(\mu^\pm)$ overlaps
a component $\beta$ of $\boundary W$, then $$
U(\beta) \topprec U(\gamma)  
\implies
d_W(\gamma,\mu^+) \le r
$$
and similarly
$$
U(\gamma) \topprec U(\beta)  
\implies
d_W(\gamma,\mu^-) \le r
$$
\end{enumerate}
\end{lemma}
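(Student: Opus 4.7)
The plan is to use a resolution $(\mu_n)$ of $H=H(\mu^\pm)$ and exploit the monotonic arrangement of the associated maximal extended split-level surfaces $\hhat F_{\mu_n}$ in $S\times\R$ to translate the topological order of tubes into the combinatorial order of their appearances in the resolution. For each vertex $v\in\CH(\mu^\pm)$ let $I_v$ denote the set of indices $n$ for which $v$ is a curve of the slice underlying $\mu_n$; by construction of the model in \cite{ELC1} the tube $U(v)$ has height-range in $S\times\R$ matching, up to bounded error, the range of heights of the $\hhat F_{\mu_n}$ with $n\in I_v$.

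The combinatorial-to-topological bridge is the following. If $u,v\in\CH$ overlap each other, they cannot coexist in a single slice, so $I_u$ and $I_v$ are disjoint intervals; combined with the monotonicity of the $\hhat F_{\mu_n}$, this forces $U(u)\topprec U(v)$ to hold if and only if $\max I_u<\min I_v$. This identification is essentially the content of the discussion of topological ordering in \S 3--4 of \cite{ELC2}.

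For part (1), pick $n_u\in I_u$ and $n_v\in I_v$; by the bridge above, $n_u\le n_v$. Since $u$ and $v$ both overlap $W$, we have $\pi_W(u)\subset\pi_W(\mu_{n_u})$ and $\pi_W(v)\subset\pi_W(\mu_{n_v})$. As $n$ traverses the resolution, the projections $\pi_W(\mu_n)$ march along the geodesic $k$ supported on $W$ with bounded backtracking: this is a consequence of Lemmas 6.1 and 6.2 of \cite{masur-minsky2} (the same machinery behind Lemma \ref{curves near geodesic}), which yield a uniform bound $r_0=r_0(S)$ on how far $\pi_k(\mu_n)$ can retreat as $n$ increases. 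Composing with the nearest-point projection $\pi_k$ and absorbing bounded errors, we obtain $\pi_k(u)\le\pi_k(v)+r$.

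For part (2), suppose $\gamma$ overlaps $\beta\subset\partial W$ with $U(\beta)\topprec U(\gamma)$, so $\max I_\beta<\min I_\gamma$. Because the geodesic $k$ is supported in $W$ and its domain is born and dies with the presence of $\partial W$ in the slices, for $n>\max I_\beta$ the subsurface $W$ has dropped out of the nested structure of $\mu_n$, and the same no-backtracking property forces $\pi_W(\mu_n)$ to lie within uniformly bounded distance of the terminal vertex $\tau_W$ of $k$. By Lemma \ref{large link}, $\tau_W$ is within $A$ of $\pi_W(\mu^+)$. Choosing $n_\gamma\in I_\gamma$, since $\gamma$ overlaps $\beta\subset\partial W$ we have $\pi_W(\gamma)\subset\pi_W(\mu_{n_\gamma})$ with $n_\gamma>\max I_\beta$, whence $d_W(\gamma,\mu^+)\le r$. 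The case $U(\gamma)\topprec U(\beta)$ is symmetric, using $\iota_W$ and $\mu^-$ in place of $\tau_W$ and $\mu^+$.

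The main obstacle is the rigorous identification of the topological order of tubes with the order of their resolution-lifetimes, i.e.\ ensuring that the height ranges of $U(u)$ and $U(v)$ in the model really do interleave the split-level surfaces as indicated. This requires careful handling of the shearing near tubes and of the boundary blocks, and is essentially the geometric content of \cite{ELC2}; once it is in hand the remainder of the proof reduces to bookkeeping of uniform additive errors in the curve complex.
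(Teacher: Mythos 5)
Your proposal is correct and takes essentially the same route as the paper: convert topological order of tubes into resolution-time order of the intervals $J(\cdot)$ via the monotone arrangement of split-level surfaces, then use the \cite{masur-minsky2} Section~6 machinery to see that projections to $\CC(W)$ are frozen near $\mu^-$ (resp.\ $\mu^+$) before (resp.\ after) $k_W$ is active and traverse $k_W$ monotonically while it is active, which is exactly your ``bounded backtracking'' claim. The only cosmetic differences are that the paper needs only the one-way implication $U(u)\topprec U(v)\Rightarrow \max J(u)<\min J(v)$ rather than your ``if and only if,'' and the frozen-projection facts are attributed there to Lemmas 6.1 and 6.9 of \cite{masur-minsky2} (not 6.1 and 6.2), together with the interval structure of $J(k_W)$ from Lemma 4.9 of \cite{ELC2}.
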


\begin{proof} 
Fix a resolution $(\mu_n)$ of the hierarchy.
Following the notation in Section 4 of  \cite{ELC2}, for any vertex or simplex
$a$ in $\CH(\mu^\pm)$ define $J(a)\subset \Z$ to be the set of $n$ such
that $\base(\mu_n)$ contains $a$.  There is also a subset $J(k_W)$
which consists of those indices for which the geodesic $k_W$ is
``active'' in the resolution in a certain sense. Rather than give the
full definition we will note that 
$$J(k_W) \subset J([\boundary W]),$$ 
i.e. the geodesic is only active when $\boundary W$ is visible in the
marking, and that for each $n\in J(k_W)$ there must be some vertex $x$ 
of $k_W$ such that $x\in \base(\mu_n)$ -- in other words $n\in J(x)$. 
Lemma 4.9 of \cite{ELC2} states that $J(a)$ and $J(k_W)$ are {\em
  intervals in $ \Z$.}

Note that if $a$ and $b$ overlap then $J(a)$ and $J(b)$ are
disjoint. Because the split-level surfaces $F_{\mu_n}$ move
monotonically upward in $S\times\R$, we have immediately that 
\begin{equation}\label{topprec Jprec}
U(a) \topprec U(b) \implies \max J(a) < \min J(b).
\end{equation}
Another aspect of the monotonicity property of resolutions is that the
vertices of $k_W$ are traversed monotonically. That is, if
$u,v$ are vertices in $k_W$, then 
\begin{equation}\label{hW monotonic}
\max J(u) < \min J(v) \implies u < v.
\end{equation}

We will also need the following: If $n\in J(a)$, then
\begin{equation}\label{below J}
n < \min J(k_W) \implies  d_W(\mu_n,\mu^-) \le r_0
\end{equation}
and similarly
\begin{equation}\label{above J}
\max J(k_W) < n \implies  d_W(\mu_n,\mu^+) \le r_0
\end{equation}
for some uniform choice of $r_0$.
In other words, the projection to $W$ of everything in the hierarchy that
happens ``before'' $k_W$ is frozen, and similarly for everything
afterwards. This is a consequence of Lemmas 6.1 and 6.9 of
\cite{masur-minsky2}, in a way similar to Lemma \ref{curves near
  geodesic}.
(The discussion in \cite{masur-minsky2} identifies a certain sequence of
geodesic segments in $H$ that connect $k_W$ to $\mu^-$ and $\mu^+$ in such a
way that every vertex in the sequence projects nontrivially to
$\CC(W)$, and Lemma 6.1 shows that the parts of this sequence before
and after $k_W$, respectively, have bounded projections to $\CC(W)$.
Lemma 6.9 shows that every slice in the resolution meets some part of
this sequence.)

\medskip

{\em Proof of part (1):} Since $U(u) \topprec U(v)$, we can choose $s_u\in
J(u)$ and $s_v\in J(v)$ such that $s_u < s_v$ (by (\ref{topprec Jprec})).

If $s_u\in J(k_W)$, then  $\base(\mu_{s_u})$ contains a vertex
$u'$ in $k_W$, and in particular $s_u \in J(u')$. Note that $u'$ is within 1 of
$u$ in $\CC(W)$, and hence within 2 of $\pi_{k_W}(u)$.

If $s_v \in J(k_W)$ as well, then we similarly have $v'$ in $k_W$, so
that $s_v\in J(v')$ and $v'$ is within 2 of $\pi_{k_W}(v)$. It therefore suffices to
show that $u'\le v'+1$.

If $d_W(u',v') \le 1$ then we are done, and otherwise  $u'$ and
$v'$ overlap, so that $J(u')$ and $J(v')$ are disjoint. Since $s_u <
s_v$, it must be that $\max J(u') < \min J(v')$, so that $u'$ must
appear before $v'$ in $k_W$ (by (\ref{hW monotonic})). Again we are
finished in this case.  

If one of $s_u$ and $s_v$ is not in $J(k_W)$, suppose without loss of
generality it is $s_u$. 

If $s_u < \min J(k_W)$ then, by (\ref{below J}), $\pi_W(u)$ is within $r_0$ of the
initial point of $k_W$. In this case the conclusion holds trivially no
matter where $\pi_{k_W}(v)$ is. 

If $\max J(k_W)< s_u$, then by (\ref{above J}), $\pi_W(u)$ is within $r_0$ of the
final point of $k_W$. Since $s_u < s_v$ the same holds for $\pi_W(v)$,
and again we are done. 

\medskip

For the proof of part (2), we first note that since $\gamma,\beta\in\CH(\mu^\pm)$
and $\gamma$ and $\beta$ overlap, $U(\gamma)$ and $U(\beta)$ are topologically
ordered (see \cite[Lemma 4.9]{ELC2}). Moreover,
since $\beta\in [\boundary W]$ , $J(\gamma)$ and $J(k_W)$ are
disjoint. If $U(\beta) \topprec U(\gamma)$ then $\max J(k_W) < \min
J(\gamma)$, and as above, (\ref{above J}) implies that $d_W(\gamma,\mu^+)$ is
bounded. The proof of the opposite case is similar.
\end{proof}

\subsection{Topological lemmas}
\label{toplemmas}

In this section, we collect topological lemmas concerning ordering of curves and
surfaces in $S\times \R$, which will be applicable to
split-level and pleated surfaces in our hyperbolic manifolds. 

We begin by showing that a proper homotopy equivalence whose image is disjoint from a level curve
$\gamma_0=\gamma\times \{ 0\}$ lies above $\gamma_0$ if and only if
there is an essential curve on the surface which 
intersects $\gamma$ whose image lies above $\gamma_0$.

\begin{lemma}{above and below}
Let $\alpha$ and $\gamma$ be overlapping curves on $S$ and let
$$f:(S, \boundary S) \to (S  \times \R, \boundary S \times \R)$$
be a homotopy equivalence with image disjoint from $\gamma_0=\gamma
\times \{0\}$. Then $f|_\alpha$ lies above $\gamma_0$ if and only if
$f$ lies above $\gamma_0$. 
\end{lemma}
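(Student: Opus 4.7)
The forward direction ($f$ above implies $f|_\alpha$ above) is immediate: restrict the witnessing extension $F\colon S\times[0,\infty)\to S\times\R\setminus\gamma_0$ to $\alpha\times[0,\infty)$. For the converse, suppose $f|_\alpha$ lies above $\gamma_0$. My plan is to argue by contradiction against the two alternatives: either $f$ lies below $\gamma_0$, or $f$ does neither.

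First I would rule out that $f$ lies below $\gamma_0$. If it did, restricting the downward extension to $\alpha\times[0,\infty)$ would witness $f|_\alpha$ lying below $\gamma_0$. Because $f$ is a homotopy equivalence, $f|_\alpha$ is homotopic in $S\times\R$ to the level inclusion $\alpha\times\{0\}$, while $\gamma_0$ is the level inclusion of $\gamma$; since $\alpha,\gamma$ overlap essentially in $S$, the maps $f|_\alpha$ and $\gamma_0$ overlap in the sense defined just before the lemma. The observation there prohibits overlapping maps from lying simultaneously above and below, contradicting the hypothesis that $f|_\alpha$ lies above.

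To rule out that $f$ is neither above nor below, I would pass to the annular cover $\pi\colon S_\gamma\to S$ corresponding to $\langle\gamma\rangle$. Since $f$ is a homotopy equivalence with $f_*^{-1}(\langle\gamma\rangle)=\langle\gamma\rangle$, it lifts to $\tilde f\colon S_\gamma\to S_\gamma\times\R$ whose image avoids the full preimage of $\gamma_0$, in particular the canonical core lift $\tilde\gamma_0$ at level $0$. Because $\alpha$ overlaps $\gamma$, one can choose a compact sub-arc $\tilde\alpha\subset S_\gamma$ lifting a segment of $\alpha$ and crossing the core circle transversely once, so that the hypothesis on $f|_\alpha$ lifts to an upward extension of $\tilde f|_{\tilde\alpha}$. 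The cover $S_\gamma\times\R$ is topologically an annulus times $\R$, and the side (above, below, or neither) of $\tilde f$ relative to $\tilde\gamma_0$ is detected by a linking/winding invariant computable from the restriction to $\tilde\alpha$; the hypothesis pins this invariant to the ``above'' value, forcing $\tilde f$ globally above $\tilde\gamma_0$, and the resulting upward homotopy descends via $\pi$ to yield the desired extension of $f$.

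The main obstacle I anticipate is the linking argument in the cover. One must justify that the side of $\tilde f$ is globally determined by the single compact transverse arc $\tilde\alpha$; this uses that $\tilde f$ is a homotopy equivalence (so its projection to $S_\gamma$ is close to the identity on the annulus) and that $\tilde\alpha$ is essential in $S_\gamma$. Without these inputs an arc-level invariant could easily fail to detect surface-level wrapping, and the cover serves precisely to reduce the 3-dimensional intersection problem to a manageable 2-dimensional one.
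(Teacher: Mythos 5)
Your forward direction and your exclusion of the case ``$f$ lies below'' are fine (the latter rests on the observation recorded just before the lemma that overlapping maps cannot lie on both sides of each other). But the remaining case --- $f$ lying neither above nor below $\gamma_0$ --- is precisely the content of the lemma, and your treatment of it is not a proof. The assertion that the side of $\tilde f$ relative to the core lift $\tilde\gamma_0$ is detected by a linking/winding invariant computed on the single compact arc $\tilde\alpha$, and that this ``forces $\tilde f$ globally above $\tilde\gamma_0$,'' is exactly the claim that arc-level data controls surface-level wrapping, i.e.\ it is a restatement of the lemma in the annular cover; you flag this yourself as the main obstacle but never supply the argument. Nothing you wrote rules out the image of $\tilde f$ winding around $\tilde\gamma_0$ far from $\tilde\alpha$, which is the only thing that could obstruct pushing the whole surface upward. (Note also that you cannot appeal to an above/below dichotomy here, since the dichotomy lemma in the paper requires a non-annular level subsurface.)

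The descent step is also flawed on its own terms. An upward homotopy of $\tilde f$ in $S_\gamma\times\R$ is a homotopy of the composite $f\circ\pi$ defined on $S_\gamma$; it does not factor through the covering projection unless it is constant on fibers, so it does not directly yield a homotopy of $f$ on $S$. Moreover, even if it projected, avoiding only the core lift $\tilde\gamma_0$ is insufficient: the preimage of $\gamma_0$ in $S_\gamma\times\R$ also contains the noncompact lifts of $\gamma$ at level $0$, and a track meeting one of those projects to a track meeting $\gamma_0$. The paper's proof avoids all of this by working directly in $S\times\R$ with the half-infinite annulus $A=\gamma\times(-\infty,0]$ as a barrier: push $f|_\alpha$ off $A$ (possible since it lies above $\gamma_0$), extend that homotopy to $S$ supported near $\alpha$, make the resulting map $g$ transverse to $A$, and analyze $g^{-1}(A)$: each component is either homotopic to $\gamma$ (impossible, since such a curve must cross $\alpha$ while $g(\alpha)$ misses $A$) or inessential, and the inessential components are removed by an innermost-disk argument using irreducibility of $(S\times\R)\setminus\gamma_0$; once $g(S)$ misses $A$ it can be pushed above $\gamma_0$. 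Some such barrier-and-surgery argument (or a genuinely worked-out equivariant version of your covering-space idea) is needed to close the gap.
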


\begin{proof}
Clearly if $f$ lies above $\gamma_0$ then $f|_\alpha$ lies above $\gamma_0$. 

Suppose that $f|_\alpha$ lies above $\gamma_0$.
Let $A = \gamma \times (-\infty,0]$. We can homotope $f|_\alpha$,
in the complement of $\gamma_0$,
to a map whose image is disjoint from $A$. This homotopy can be
extended to all of $S$ where the homotopy is supported on a
neighborhood of $\alpha$ and the image of the homotopy is disjoint
from $\gamma_0$. Let $g:(S,\boundary S) \to (S \times \R, \boundary S
\times \R)$ be the new map. We can assume $g(S)$ intersects $A$
transversely. Then $\Gamma = g^{-1}(A)$ will be a collection of
disjoint curves on $S$. Since $g$ is a homotopy equivalence every
curve in $\Gamma$ will either be homotopic to  
$\gamma$ or will bound a disk. However, any curve that is homotopic to
$\gamma$ must intersect $\alpha$ and since $g(\alpha)$ is disjoint
from $A$ we must have that all curves in $\Gamma$ bound disks. Using
the standard innermost disk argument and the fact that $(S \times \R)
- \gamma_0$ is irreducible we can then homotope $g$, in the complement
of $\gamma_0$, to a map whose image is disjoint from $A$. Such a map
will lie above $\gamma_0$ so $g$ and therefore $f$ lies above
$\gamma_0$. 
\end{proof}

We next observe that a proper homotopy equivalence whose image is disjoint from an essential non-annular level
subsurface lies either above or below that subsurface

\begin{lemma}{above or below}
Let $W$ be a non-annular subsurface of a compact surface $S$.
If 
$$f:(S, \boundary S) \to (S  \times \R, \boundary S \times \R)$$
is a homotopy equivalence with image disjoint from $W_0=W \times \{0\}$,
then either $f$ lies above or below $W_0$.
\end{lemma}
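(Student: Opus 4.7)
The plan is to first replace $f$ by a proper embedding in $Y := (S\times\R)\setminus W_0$, and then use the fact that such an embedding separates $S\times\R$ into two regions, exactly one of which contains $W_0$. The opposite region provides the room needed to push $f$ toward $+\infty$ or $-\infty$ while avoiding $W_0$.

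First I homotope $f$, through maps into $Y$, to an embedding $f':(S,\partial S)\hookrightarrow (S\times\R,\partial S\times\R)$ whose image lies in $Y$. Since $f$ is a homotopy equivalence, $f_*:\pi_1(S)\to\pi_1(S\times\R)$ is an isomorphism, and factoring through $\pi_1(Y)$ (via the inclusion $Y\hookrightarrow S\times\R$) shows that $f_*:\pi_1(S)\to\pi_1(Y)$ is injective. Thus $f$ is a proper incompressible map of a compact surface into the Haken 3-manifold $Y$, and by the classical result that incompressible maps of surfaces into Haken 3-manifolds can be homotoped (rel $\partial$) to embeddings, we obtain $f'$ homotopic to $f$ through maps in $Y$.

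Next, $f'$ remains a homotopy equivalence from $S$ to $S\times\R$, so the 2-sided, properly embedded surface $f'(S)$ represents the generator of $H_2(S\times\R,\partial S\times\R)$. Hence $f'(S)$ separates $S\times\R$ into two components $R_+$ and $R_-$, where $R_+$ contains $S\times\{t\}$ for all sufficiently large $t>0$ and $R_-$ contains $S\times\{t\}$ for all sufficiently negative $t$. Moreover, by Waldhausen's theorem $f'(S)$ is isotopic in $S\times\R$ to a level surface, giving a product structure $\overline{R_-}\cong S\times(-\infty,0]$ in which $f'(S)$ corresponds to $S\times\{0\}$.

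Since $W_0$ is connected and disjoint from $f'(S)$, it lies entirely in exactly one of $R_+$ or $R_-$; by symmetry assume $W_0\subset R_+$. Using the product structure on $\overline{R_-}$, I define an extension $F':S\times[0,\infty)\to\overline{R_-}\subset (S\times\R)\setminus W_0$ with $F'(\cdot,0)=f'$ that slides $f'(S)$ monotonically down to $-\infty$ in the $q$-coordinate. This witnesses that $f'$ lies below $W_0$, and composing with the homotopy from $f$ to $f'$ in $Y$ shows that $f$ does as well. The case $W_0\subset R_-$ is symmetric and shows $f$ lies above $W_0$. The main obstacle is the embedding step: one must ensure the approximating homotopy takes place \emph{within} $Y$, not merely in $S\times\R$, and this relies on the Haken structure of $Y$ together with the $\pi_1$-injectivity of $f$ as a map into $Y$. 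Once the embedding is in hand, the separation and pushing arguments proceed along standard lines.
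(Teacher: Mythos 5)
The separation-and-push part of your argument is fine, but the embedding step on which everything rests is a genuine gap. There is no classical theorem asserting that a $\pi_1$-injective map of a compact surface into a Haken $3$-manifold can be homotoped (rel boundary or otherwise) to an embedding; this is false in general, as incompressible immersed surfaces not homotopic to embeddings are plentiful (horizontal immersed surfaces in Seifert fibered spaces, many immersed incompressible surfaces in hyperbolic Haken manifolds). What is true (Waldhausen) is that a homotopy equivalence $S\to S\times\R$ is homotopic to a level embedding, but that homotopy takes place in all of $S\times\R$ and may cross $W_0$ --- and whether $f$ lies above or below $W_0$ is exactly the information destroyed by such a crossing. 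So the statement you actually need, namely that $f$ can be homotoped to an embedding \emph{through maps missing} $W_0$, does not follow from $\pi_1$-injectivity of $f$ as a map into $Y=(S\times\R)\setminus W_0$; establishing it requires essentially the same work as the lemma itself, so as written the key step is either unsupported or circular.

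For comparison, the paper's proof never makes $f$ embedded. One homotopes $f$ in the complement of $W_0$ so that $f^{-1}(W\times\R)=f^{-1}(W\times(\R\setminus\{0\}))$ is a union of essential subsurfaces of $S$; since $f$ is a homotopy equivalence, these consist of a single isotope of $W$ together with annuli that are homotopic rel boundary into $\partial W\times(0,\infty)$ or $\partial W\times(-\infty,0)$ and can be pushed off. Connectivity of $W$ then forces $f(W)$ to lie in $W\times(0,\infty)$ or in $W\times(-\infty,0)$, after which $f$ can be pushed to $+\infty$ or $-\infty$ in the complement of $W_0$. If you want to salvage your outline, you would have to carry out this kind of transversality and preimage analysis inside $Y$ anyway, at which point the embedding, the homological separation discussion, and the appeal to Waldhausen become unnecessary.
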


\begin{proof}
Much as in the proof above, 
we may homotope $f$ (in the complement of $W_0$) so that
$f^{-1}(W\times\R) = f^{-1}(W\times(\R\setminus\{0\}))$ is a union of
essential subsurfaces of $S$. Since $f$ is a homotopy equivalence,
these subsurfaces must consist of one isotope of $W$ and a (possibly
empty) collection of
disjoint annuli.

Each annulus maps either to $W\times (0,\infty)$ or
$W\times(-\infty,0)$, where it must be homotopic rel boundary to
$\boundary W \times (0,\infty)$ or $\boundary W \times (-\infty,0)$,
respectively. Thus, after homotopy we may assume that
$f^{-1}(W\times\R)$ is just $W$, and $f(W)$ lies either in 
$W\times (0,\infty)$ or in $W\times(-\infty,0)$. It follows that we
can homotope $f$ to $+\infty$ or $-\infty$, respectively, in the
complement of $W_0$. 
\end{proof}

We say a map of a curve system in $S$ to $S\times\R$ is {\em unknotted} if it
is isotopic to a level embedding. 
It will also be useful to recall that knotting of the boundary is the
only obstruction to extending an embedding of an essential subsurface 
to a proper embedding of the entire surface which is isotopic to a level surface. 
This result is a special case of Lemma 3.10 in \cite{ELC2}, although
the proof of just this case is not hard.

\begin{lemma}{unknotted subsurface}
Let $W$ be a compact essential subsurface of $S$. If \hbox{$h: W \to S
  \times \R$} is an embedding homotopic to a level embedding, 
such that 
$h(\partial W)$ is unknotted, then $h$ extends to an embedding of $S$ in $S \times \R$ whose image 
is isotopic to $S \times \{0\}$.
\end{lemma}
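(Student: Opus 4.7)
The plan is to normalize the boundary behavior of $h$ via the unknotting hypothesis, apply Waldhausen-style rigidity to straighten $h|_W$ to a level embedding, and then extend trivially to all of $S$.

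By hypothesis $h(\partial W)$ is unknotted, so there is an ambient isotopy of $S\times\R$ carrying $h(\partial W)$ onto the level curve system $\partial W\times\{0\}$. After applying this isotopy to $h$, we may assume $h(\partial W)=\partial W\times\{0\}$ as a set. Since $h$ is homotopic to a level embedding $j|_W$ (where $j:S\to S\times\R$ is the standard level inclusion), the restriction $h|_{\partial W}$ is homotopic, hence isotopic through embeddings by general position, to $j|_{\partial W}$. A further ambient isotopy supported in a tubular neighborhood of $\partial W\times\{0\}$ therefore arranges that $h|_{\partial W}=j|_{\partial W}$. Using that $\partial W$ is $\pi_1$-injective in $W$, the free homotopy from $h|_W$ to $j|_W$ can then be promoted to a homotopy rel $\partial W$.

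Both $h(W)$ and $W\times\{0\}$ are $\pi_1$-injective in $S\times\R$, hence incompressible properly embedded surfaces in the irreducible $3$-manifold $S\times\R$, sharing boundary $\partial W\times\{0\}$. Waldhausen's rigidity theorem, applied in the relative setting, upgrades the rel-boundary homotopy between $h|_W$ and $j|_W$ to a proper isotopy rel $\partial W$. The isotopy extension theorem then produces an ambient isotopy $\Phi_t$ of $S\times\R$, fixing $\partial W\times\{0\}$, with $\Phi_0=\mathrm{id}$ and $\Phi_1\circ h=j|_W$. Setting $H:=\Phi_1^{-1}\circ j:S\to S\times\R$, we obtain an embedding satisfying $H|_W=\Phi_1^{-1}\circ(\Phi_1\circ h)=h$, and $H(S)=\Phi_1^{-1}(S\times\{0\})$ is isotopic to $S\times\{0\}$ via $\Phi_{1-t}^{-1}$, completing the proof.

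The main obstacle is the Waldhausen-rigidity step, since the boundary $\partial W\times\{0\}$ of our surfaces lies in the interior of $S\times\R$ rather than on $\partial(S\times\R)$, so the standard statement does not apply verbatim. The fix is routine but requires care: cut $S\times\R$ along a regular neighborhood of $\partial W\times\{0\}$ (or equivalently of $W\times\{0\}$), which promotes the shared boundary to actual boundary of the ambient $3$-manifold, reducing the situation to the standard rel-boundary Waldhausen theorem for incompressible, boundary-incompressible surfaces in an irreducible, boundary-irreducible $3$-manifold. The isotopy is then glued back across the cut. The promotion of the free homotopy to one rel $\partial W$ in the previous paragraph is similarly standard, using that $\pi_1(\partial W)\hookrightarrow\pi_1(W)$ and a small obstruction-theory argument.
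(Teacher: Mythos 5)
The paper itself offers no proof here---the lemma is quoted as a special case of Lemma 3.10 of \cite{ELC2}---so your self-contained sketch must stand on its own, and its overall shape is reasonable: use the unknotting hypothesis and isotopy extension to put $h|_{\partial W}$ at level zero, upgrade homotopy to isotopy for the incompressible surface by a relative Waldhausen argument after cutting along a neighborhood of $\partial W \times \{0\}$, and produce the extension by applying the inverse ambient isotopy to a level surface. However, there is a genuine gap at the step asserting that the free homotopy from $h$ to the level embedding $j|_W$ ``can be promoted to a homotopy rel $\partial W$'' using only $\pi_1$-injectivity of $\partial W$ in $W$. That assertion is false in general: the obstruction is the classical spinning/twisting ambiguity about the boundary curves. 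Concretely, let $c$ be a component of $\partial W$ that is non-peripheral in $S$, let $\tau_c$ be a Dehn twist supported in a collar of $c$ inside $W$, and set $h = j\circ\tau_c$. Then $h$ is an embedding with image $W\times\{0\}$, agrees with $j$ on $\partial W$, has unknotted boundary, and is freely homotopic to $j|_W$ (push the twisting annulus to the other side of $c$); but it is not homotopic to $j|_W$ rel $\partial W$, since a rel-$\partial W$ homotopy fixes a basepoint $p\in c$ and forces equality of the induced maps on $\pi_1(W,p)$, whereas $(j\circ\tau_c)_*$ differs from $j_*$ by conjugation by $j_*[c]$, which cannot fix $j_*$ because $j_*\pi_1(W)$ has trivial centralizer in $\pi_1(S)$ when $W$ is not an annulus. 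So the rel-boundary homotopy you feed into the Waldhausen step need not exist, and with it the identity $\Phi_1\circ h = j|_W$; as written, your $H=\Phi_1^{-1}\circ j$ would then extend $h\circ\tau_c^{-1}$ rather than $h$.

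Fortunately the gap is repairable within your framework. The correct statement is that $h$ is homotopic rel $\partial W$ to $j\circ\tau$, where $\tau$ is some composition of Dehn twists about the components of $\partial W$ supported in collars inside $W$: since the centralizer in $\pi_1(S)$ of each boundary curve is the cyclic group it generates, the rel-boundary discrepancy at each component is a power of that curve and is exactly absorbed by a boundary twist. The map $j\circ\tau$ has the same level image as $j|_W$, and $\tau$ extends by the identity to a homeomorphism $\hat\tau$ of $S$; so run the cutting/Waldhausen/isotopy-extension argument with $j\circ\tau$ in place of $j|_W$ and finish with $H=\Phi_1^{-1}\circ j\circ\hat\tau$, which restricts to $h$ on $W$ and has image $\Phi_1^{-1}(S\times\{0\})$, isotopic to a level surface. (When doing the cutting you should also note that the rel-$\partial W$ homotopy must be pushed off the removed solid tori, which is standard for essential surfaces but deserves a sentence.) A smaller blemish: ``homotopic, hence isotopic through embeddings by general position'' is not a valid inference for curves in a $3$-manifold---failure of exactly this is what knotting means---but it is also unnecessary, since the unknotting hypothesis already provides an isotopy of the embedding $h|_{\partial W}$ to a level embedding, which isotopy extension makes ambient.
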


Our final topological lemma is a degree computation which 
will be used to complete the proof of Theorem \ref{topistop} in the case
that the essential subsurface is an annulus. 

\begin{lemma}{both above}
Let $\gamma$ be an essential curve in $S$ and $\mathcal{N}(\gamma)$ be an open regular neighborhood of $\gamma \times \{1/2\}$ in 
$S \times [0,1]$ with boundary the torus $T$. Suppose that 
$$f:((S \times [0,1])\backslash \mathcal{N}(\gamma),\boundary S \times[0,1], T) \to ((S \times [0,1]) \backslash \mathcal{N}(\gamma), \boundary S \times[0,1],T)$$
is a continuous map of triples such that 
$f|_{S\times\{0\}}$ and $f|_{S\times\{1\}}$ are homotopic in the
complement of $\NN(\gamma)$.

Then the restriction of $f$ to $T$ has degree zero.
\end{lemma}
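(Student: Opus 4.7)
The plan is to compute $d = \deg(f|_T)$ homologically. Set $X := (S \times [0,1]) \setminus \mathcal{N}(\gamma)$ and $Y := S \times [0,1]$, and let $\tau := (i_X)_*[T] \in H_2(X;\Z)$ where $i_X\colon T \hookrightarrow X$. Since $(f|_T)_*[T] = d[T]$ in $H_2(T) \cong \Z$, functoriality gives $f_*\tau = d\tau$ in $H_2(X)$, so it will suffice to show that $\tau$ has infinite order in $H_2(X)$ and that $f_*\tau = 0$.

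For $\tau \ne 0$ I would use the long exact sequence of $(Y, X)$ together with the excision isomorphism $H_*(Y,X) \cong H_*(\overline{\mathcal{N}(\gamma)}, T)$; the solid-torus pair has $H_3 \cong \Z$, while $H_3(Y) = 0$, so the connecting map embeds this $\Z$ as a summand of $H_2(X)$ and sends its generator to $\tau$. For $f_*\tau = 0$ I would use that $(i_X)_*[\partial X] = 0$ in $H_2(X)$ together with the decomposition $\partial X = T \sqcup \partial Y$, where $\partial Y$ is the closed surface $S \sqcup S$ if $\partial S = \emptyset$ and the double of $S$ otherwise; with correct boundary orientations this yields $(i_X)_*[\partial Y] = \tau$ up to sign, so applying $f_*$ reduces the problem to showing $f_*[\partial Y] = 0$ in $H_2(X)$. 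This would then follow by extending $f|_{\partial Y}\colon \partial Y \to X$ to a continuous map $\tilde f\colon Y \to X$, since then $\tilde f_*[Y]$ is a 3-chain in $X$ whose boundary $\tilde f_*[\partial Y]$ is homologous to $f_*[\partial Y]$.

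The existence of such an extension would follow from obstruction theory in the aspherical setting: $Y \simeq K(\pi_1(S), 1)$ because $S$ has nonpositive Euler characteristic, and $X$ is aspherical because it is irreducible with infinite $\pi_1$ (irreducibility uses that $\gamma$ is essential, which makes $T$ incompressible in $X$ since its meridian is nontrivial in $\ker(\pi_1(X) \to \pi_1(Y))$). In this setting a map $\partial Y \to X$ extends over $Y$ precisely when its induced $\pi_1$-homomorphism factors through the inclusion-induced map $\pi_1(\partial Y) \to \pi_1(Y) = \pi_1(S)$, and the hypothesis that $f|_{S\times\{0\}}$ and $f|_{S\times\{1\}}$ are homotopic in $X$ says exactly that the two induced homomorphisms $\pi_1(S) \to \pi_1(X)$ coincide up to conjugation, which is the needed factoring condition. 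The main obstacle will be making this $\pi_1$ argument precise: verifying asphericity of $X$, handling basepoint and orientation bookkeeping, and — when $\partial S \ne \emptyset$ — ensuring the hypothesis is strong enough to force agreement of the two copies of $\pi_1(S)$ in $\pi_1(\partial Y) = \pi_1(S) *_{\pi_1(\partial S)} \pi_1(S)$, not merely up to independent conjugations.
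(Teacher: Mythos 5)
Your homological skeleton is sound and, for two of its three steps, essentially matches the paper's: you reduce the claim to the two facts that $[T]$ is non-torsion in $H_2$ (your long-exact-sequence/excision argument is correct; the paper needs the same nontriviality, stated in the relative group $H_2\bigl((S\times[0,1])\setminus\mathcal{N}(\gamma),\partial S\times[0,1]\bigr)$) and that $f_*[T]=0$, the latter via $[\partial X]=0$ in $H_2(X)$. When $\partial S=\emptyset$ your final step is also complete and even simpler than you make it: a map defined on $S\times\{0,1\}$ extends over $S\times[0,1]$ exactly when its two restrictions are homotopic, which is the hypothesis, so no asphericity or obstruction theory is needed there.

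The difficulty you flag in your last sentence is, however, a genuine gap when $\partial S\neq\emptyset$, and it is precisely where your route differs from the paper's. To extend $f|_{\partial Y}$ over $Y$ (even after a homotopy, using asphericity of $X$) you need $(f|_{\partial Y})_*$ to kill the kernel of the folding map $\pi_1(\partial Y)\to\pi_1(S)$. Writing $\delta=\{q\}\times[0,1]$ for a vertical arc of $\partial S\times[0,1]$, this amounts to demanding that $f\circ\delta$ itself be the track of a homotopy from $f|_{S\times\{0\}}$ to $f|_{S\times\{1\}}$; equivalently, that $w\bar v$ centralize $f_*(\pi_1(S))$, where $w=[f\circ\delta]$ and $v$ is the basepoint track of the homotopy supplied by the hypothesis. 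The hypothesis only gives agreement of the two induced homomorphisms up to an uncontrolled conjugation (and, with several boundary components, up to independent conjugations that must be made coherent), so this condition is not implied; what the map $f$ itself forces is only the corresponding compatibility over $\pi_1$ of $S\setminus\collar(\gamma)$, and the discrepancy on loops crossing $\gamma$ is exactly the quantity the lemma is trying to control. Since the relevant centralizers are typically trivial, the extension over $Y$ can simply fail to exist even though the homological conclusion is true, so this step cannot be repaired by bookkeeping alone. The paper avoids the issue by using the homotopy to \emph{glue} rather than to \emph{fill}: after adjusting $f$ so that $f(x,0)=f(x,1)$, the map descends to $F:(S\times S^1)\setminus\mathcal{N}(\gamma)\to (S\times[0,1])\setminus\mathcal{N}(\gamma)$ sending $\partial S\times S^1$ into $\partial S\times[0,1]$, and the image of the relative fundamental class of this manifold is the desired 3-chain, with the side boundary discarded by working in $H_2\bigl(X,\partial S\times[0,1]\bigr)$. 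If you keep your first two steps but replace ``extend over $Y$'' by this gluing (equivalently, use the homotopy $H:S\times[0,1]\to X$ itself as the 3-chain and work rel $\partial S\times[0,1]$ to dispose of $H|_{\partial S\times[0,1]}$), your argument closes, and no asphericity of $X$ is required anywhere.
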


\begin{proof}
Since  $f|_{S\times \{0\} }$ and $f|_{S\times
  \{1\} }$ are homotopic to each other in the complement of
$\NN(\gamma)$, we may assume, possibly adjusting $f$ by homotopy, that $f(x,0) = f(x,1)$ for all $x\in S$.
Thus, $f$ descends to a map
$$F: (S \times S^1)\setminus \mathcal{N}(\gamma) \to (S \times
[0,1])\setminus \mathcal{N}(\gamma)$$ 
with $F(\boundary S \times S^1) \subseteq \boundary S \times [0,1]$.
Since $F$ defines a relative 3-chain in 
$$((S \times [0,1]) \backslash \mathcal{N}(\gamma), \boundary S \times [0,1])$$
whose boundary is $F|_T$, we see that
$$[F|_T]=0\in H_2((S \times [0,1])\backslash \mathcal{N}(\gamma), \boundary S \times [0,1]).$$
However, $[T]$ is a non-trivial homology class in 
\hbox{$H_2((S \times [0,1])\backslash \mathcal{N}(\gamma), \boundary S \times [0,1])$}
and 
$$F_*([T])=[F|_T]=d[T]$$
where $d$ is the degree of the restriction of $F$ to $T$ or, equivalently,
the degree of the restriction of $f$ to $T$. Therefore, this degree is zero.
\end{proof}

\subsection{Thick distance, bounded diameter lemmas, and subsurface
  product regions}
\label{thick product}

A simple but useful feature of hyperbolic geometry is the fact that
the thick part of a surface of bounded area has components of
uniformly bounded diameter. This, together with the observation that a
$\pi_1$-injective Lipschitz map of a hyperbolic surface into a
hyperbolic 3-manifold takes the thick part to the thick part (with
slightly different constants), is a useful tool that recurs, for
example, in the work of Thurston and Bonahon, and others. We develop
some notation in order to discuss and apply these ideas in our
context. 

If $X$ is a path-metric space and $A\subset X$ a subset, we denote by
$\thd_{A,X}(x,y)$ the infimum over paths $\alpha$ in $X$ from $x$ to $y$ of
the length of  $\alpha\intersect A$. This is a pseudometric
which assigns distance 0 to pairs of points in the same component of
$X\setminus A$. We let $\thdiam_{A,X}$ denote diameter with respect
to this pseudometric, and also use the abbreviation $\thdiam_A(X)
\equiv \thdiam_{A,X}(X)$.  It will also be useful, for a subset $Y\subset
X$, to let $\thd_{A,Y}$ denote the same as $\thd_{A\intersect
  Y,Y}$. 

We will use this notation when $X$ is a hyperbolic manifold $N$ and
$A=N_{thick(\ep)}$, and when $X$ is a model $M$ and $A = M\setminus\UU$.
In particular, it is easy to express the bounded diameter lemma for surfaces in this language.

\begin{lemma}{split BDL}
Given a compact surface $S$ and $\epsilon>0$, there exists $b=b(S,\ep)$ such that
\begin{enumerate}
\item
If  $X$ is a finite volume surface homeomorphic to $S$, 
then
$$
\thdiam_{X_{thick(\ep)}}(X) < b.$$

\item
If $M$ is a model manifold associated to a hierarchy and
$\hat F$ is an extended split-level surface in $M$, then
 $$
\thdiam_{\hat F_{thick(\ep)}}(\hat F) < b.
$$
\end{enumerate}
\end{lemma}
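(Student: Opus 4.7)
The plan is to prove both parts by a unified argument: show each surface decomposes into a uniformly bounded number of pieces of uniformly bounded diameter, connected by bridges that lie entirely in the thin part and therefore contribute zero to the thick pseudometric $\thd$.

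For part (1), I would first observe that by Gauss--Bonnet the area of $X$ is $-2\pi\chi(S)$ and so is bounded in terms of the topology of $S$. The number of components of the $\ep$-thin part of $X$ is also uniformly bounded, since each is either a horocyclic cusp or the standard collar of a closed geodesic of length $<\ep$, and short geodesics are pairwise disjoint (at most $3g-3+n$ of them). Hence the number $n_0$ of components of $X_{thick(\ep)}$ is bounded in terms of $S$. Each thick component has injectivity radius $\ge\ep/2$, so a standard area-packing argument (disjoint embedded disks of radius $\ep/4$ have total area at most the area of $X$) bounds its diameter by some $d_0 = d_0(S,\ep)$. Given $x,y\in X$, one moves from $x$ through the thin part at zero $\thd$-cost to reach some thick component; then walks between thick components via the thin bridges, paying at most $d_0$ each time a thick component is entered. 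Since at most $n_0$ thick components need to be visited, the total $\thd$-length is at most $n_0 d_0$.

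For part (2), the extended split-level surface $\hat F=\hat F_\mu$ is a union of level three-holed spheres $P_1,\dots,P_k$ and annuli $A_1,\dots,A_m$, the latter contained in tubes $U(v)$ for $v\in\base(\mu)$. Each $P_i$ is isometric to a fixed standard three-holed sphere, and hence has a uniform diameter $D_0$; the number $k$ is at most $-\chi(S)$. Each annulus $A_j$ carries a $CAT(-1)$ metric, constructed by radial extension to the geodesic core of $U(v)$ in the loxodromic case, or by geodesic ruling between the boundaries in the parabolic case. In either case, the injectivity radius of $\hat F$ along the core of $A_j$ degenerates, so $\hat F_{thick(\ep)}\cap A_j$ is contained in a small collar of $\partial A_j$. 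Consequently, paths that thread through the thin central region of $A_j$ contribute zero to $\thd$, and the annuli function as free bridges between adjacent three-holed spheres. The argument of part (1) then applies verbatim, yielding a bound of $kD_0$. Setting $b$ to be the maximum of the two constants finishes the proof.

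The main obstacle, modest but the only non-routine point, is part (2): verifying that the specific $CAT(-1)$ metric on the annular portions constructed in \cite{ELC1} really does have injectivity radius going to zero along the core, so that the annuli can serve as zero-cost bridges between the three-holed spheres. Once this is in place, the rest is a direct graph-of-pieces argument parallel to the classical bounded-diameter lemma used in part (1).
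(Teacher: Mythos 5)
Your part (1) is fine and is just a fleshed-out version of the standard thick--thin argument the paper invokes. The problem is in part (2), and it is exactly at the point you flagged as the crux: the claim that ``the injectivity radius of $\hhat F$ along the core of $A_j$ degenerates, so $\hhat F_{thick(\ep)}\cap A_j$ is contained in a small collar of $\partial A_j$'' is false in general. The intrinsic length of the core of the annulus in $U(v)$ (radial extension to the core geodesic in the loxodromic case) is the length of the core of the model tube, and tubes in the model are standard Margulis tubes whose cores can have length anywhere up to the Margulis constant; hierarchy vertices are only guaranteed to have length at most $L_h$, not to be short. Since $\ep>0$ is an arbitrary given constant, possibly far smaller than the core length of $U(v)$, such an annulus can lie entirely in the $\ep$-thick part of $\hhat F$, so it is not confined to a collar of its boundary and does not function as a zero-cost bridge; your final accounting ($kD_0$, with all annuli free) therefore does not follow from what you proved.

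The statement is still true, and the repair is the observation the paper actually makes: for each annulus, the intersection with the $\ep$-thick part consists of one or two annuli of diameter bounded in terms of $\ep$ (and $S$). When the core is $\ep$-short you get two boundary collars of bounded diameter (bounded because the annulus is negatively curved and its boundary curves, being boundary curves of the standard three-holed spheres, have uniformly bounded length); when the core is not $\ep$-short the tube is uniformly shallow, so the whole annulus --- possibly entirely thick --- has diameter bounded in terms of $\ep$. With that dichotomy in place, each of the boundedly many annuli contributes a bounded (not zero) amount to $\thd$, and your graph-of-pieces argument from part (1) goes through with $b$ enlarged accordingly.
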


For hyperbolic surfaces, this is a standard consequence of the
thick-thin decomposition. For split-level surfaces, it follows from
the fact that each such surface is a union of three-holed spheres
whose metric is standard and a bounded number of  CAT($-1$) annuli
each of whose intersection with the  \hbox{$\ep$-thick} part consists of one or two annuli whose diameter
is uniformly bounded in terms of $\ep$.

The following remark will be useful for us. 
Let $f:X \to N$ be a \hbox{$\pi_1$-injective}
$K$-Lipschitz map. Then, since $f(X_{thin(\ep)}) \subset
N_{thin(K\ep)}$, we obtain:

\begin{equation}\label{thick diam bound}
\thdiam_{N_{thick(K\ep)},N}(f(X)) \le K\thdiam_{X_{thick(\ep)}}(X).
\end{equation}

Finally, let us also observe
\begin{lemma}{thick dist proper}
If $N$ is a hyperbolic 3-manifold and $N^0$ the complement
of cusp neighborhoods in $N$, then $\thd_{N_{thick(\ep)},N^0}$ is a
proper pseudometric on $N^0$, when $\ep$ is less than the Margulis constant.
\end{lemma}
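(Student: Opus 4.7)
Fix $x_0 \in N^0$ and $R > 0$, and set $B_R := \{y \in N^0 : \thd(x_0,y) \le R\}$. Since $\thd$ is continuous on $N^0$ (nearby points are joined by short paths, hence paths of small thick-length), $B_R$ is closed; the plan is therefore to exhibit a compact subset of $N^0$ that contains $B_R$.

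The argument rests on two ingredients. First, the thick part $Z := N^0 \cap N_{thick(\ep)}$, endowed with its intrinsic path metric $d_Z$, is a proper metric space: $N^0$ is closed in the complete manifold $N$ (hence complete), $Z$ is closed in $N^0$ (hence complete in $d_Z$), and $Z$ is locally compact as a 3-submanifold with corners, so Hopf--Rinow makes closed $d_Z$-balls compact. Second, because $\ep$ is less than the Margulis constant, the Margulis lemma yields (a) the components of $N^0 \cap N_{thin(\ep)}$ are compact Margulis tubes around short closed geodesics of $N$ (cusp components having been excised), (b) they form a locally finite family in $N^0$, and (c) distinct components are pairwise separated by a uniform positive distance $\delta = \delta(\ep) > 0$. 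Item (c) follows by applying the Margulis lemma at the midpoint of two points in different thin components: the associated primitive length-$\ep$ loops would otherwise generate a virtually abelian subgroup of $\PSL 2(\C)$, which is forced either to be cyclic (so the two tubes coincide) or to be a rank-two parabolic subgroup (excluded from $N^0$).

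Given these, $B_R$ lies in a compact set as follows. For $y \in B_R$ realized by a path $\alpha$ of thick-length $\le R$, decompose $\alpha$ into thick arcs in $Z$ separated by tube excursions. Any two distinct tubes visited by $\alpha$ are separated (in $\alpha$) by a thick sub-arc of length $\ge \delta$, so $\alpha$ visits at most $n_0 := \lfloor R/\delta\rfloor + 1$ distinct tubes. Define $\mathcal T_1$ to be the tubes whose boundary meets $\overline{B}_{d_Z}(x_0, R)$, and inductively $\mathcal T_{k+1}$ to be the tubes whose boundary meets $\overline{B}_{d_Z}\bigl(\bigcup_{T \in \mathcal T_k} \partial T,\, R\bigr)$; each such set is a closed $d_Z$-ball around a finite union of compact tube boundaries, hence compact by properness of $d_Z$, so each $\mathcal T_k$ is finite by local finiteness. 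Therefore $\mathcal T := \mathcal T_1 \cup \cdots \cup \mathcal T_{n_0}$ is a finite collection containing the tubes visited by any such $\alpha$, and $B_R$ is contained in the (compact) $d_Z$-ball of radius $R + \sum_{T \in \mathcal T} \diam_{d_Z}(\partial T)$ about $x_0$ together with the finitely many compact tubes of $\mathcal T$.

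The main obstacle is the uniform separation $\delta > 0$ in (c): without it, the ``electric shortcuts'' provided by free travel through tubes could let a bounded-thick-length path reach infinitely many tubes, and the inductive process above would not terminate. Once the separation is established, what remains is a straightforward finite induction using local finiteness of thin components and Hopf--Rinow properness of $d_Z$.
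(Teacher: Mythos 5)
Your argument rests on exactly the same key fact as the paper's one-line proof --- the uniform lower bound $\delta(\ep)>0$ on the distance between distinct components of the $\ep$-thin part when $\ep$ is below the Margulis constant --- and the remaining steps (Hopf--Rinow properness of the thick part with its intrinsic metric, compactness and local finiteness of the tubes, the count of at most $R/\delta+1$ distinct tubes) are precisely the bookkeeping the paper declares immediate, so this is essentially the paper's proof written out in full. One minor slip: a path may exit and re-enter the \emph{same} tube arbitrarily often, so the radius $R+\sum_{T\in\mathcal T}\diam_{d_Z}(\partial T)$ is not justified as stated, but the containment you need follows anyway since every thick point of such a path lies within $d_Z$-distance $R$ of $x_0$ or of $\partial T$ for some $T\in\mathcal T$, and hence $B_R$ lies in the union of the tubes of $\mathcal T$ with a compact $d_Z$-neighborhood of $\{x_0\}\cup\bigcup_{T\in\mathcal T}\partial T$.
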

This follows immediately from the fact that, with $\ep$ less than the
Margulis constant, the distance between any two components of
$N_{thin(\ep)}$ is uniformly bounded below.

\subsubsection*{Subsurface product regions}
Another important feature of Kleinian surface groups, and their
bilipschitz models, is the presence of ``thick product regions'',
namely regions in $S\times\R$ that are topologically products $W\times
J$, and geometrically anchored on Margulis tubes (or model tubes), and
bounded by split-level surfaces. We introduce some notation for
discussing these regions, and indicate their interaction with the
topological ordering relation and the structure of the hierarchy.

If $W$ is an essential non-annular subsurface of $S$ and $M$ is a model manifold
associated to $\rho\in AH(S)$, we say that $Q\subset M$ is a {\em $W$-product region}
if  there exists an orientation-preserving
homeomorphism $g:W\times [0,1]\to Q$ so that $g(\partial W\times [0,1])\subset U(\partial W)$
and, if $g_0:W\to Q$ is the inclusion map given by $g_0(x)=g(x,0)$, then $(g_0)_*$ is conjugate
to $\rho|_{\pi_1(W)}$. In this case, $\partial_0Q=g(W\times\{0\})$ and $\partial_1Q=G(W\times \{1\})$
are called the {\em horizontal} boundary components of $Q$.
Similarly, if $N=N_\rho$ and $h:M\to N$ is the model map, we say that $R\subset N$
is a  $W$-product region for $N$ if and only if $h^{-1}(Q)$ is a $W$-product region for $M$.
In this case, $\partial_0R=h(\partial_0Q)$ and $\partial_1R=h(\partial_1Q)$
are called the horizontal boundary components of $R$.

The following lemma shows that if one has a long geodesic $k_W\subset H$ associated to a level
subsurface, then one can find thick $W$-product regions in the model
manifold. 

\begin{lemma}{W cross sections}
Let $\rho\in AH(S)$ have associated hierarchy $H$ and model map
$h:M\to N_\rho$. 
Let $W\subset S$ be the support of a geodesic $k_W$ in $H$. 

For every simplex $v\in k_W$ there is an extended split-level surface
$\hhat F_v
\subset M$ in the isotopy class of $W$, 
passing through $U(v)$, such that, if $u,v\in k_W$ and $d_W(u,v) \ge 5$, then
\begin{enumerate}
\item $\hhat F_u$ and $\hhat F_v$ are
disjoint and comprise the horizontal boundaries of a $W$-product region
for $M$.
Moreover, if $u<v$ then, $\hhat F_u \topprec
  \hhat F_v$.
\item There exists $c_1 = c_1(S) > 0$ such that
$$\thd_{M\setminus\UU,Q}(F_u,F_v) > c_1 d_W(u,v).$$
\item Given $\ep>0$ there exists $c_2 = c_2(S,\ep)$ such that
for $R = h(Q)$ and $G_x = h(F_x)$, 
$$
\thd_{N_{thick(\ep)},R}(G_u,G_v) > c_2 d_W(u,v)
$$
\end{enumerate}
\end{lemma}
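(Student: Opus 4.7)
The plan is to construct $\hhat F_v$ for each simplex $v \in k_W$ by extracting it from an extended split-level surface attached to a slice of a resolution, and then to deduce (1)--(3) from the combinatorial structure of the hierarchy together with Theorem~\ref{bilipschitz model}. Fix a resolution $(\mu_n)$ of $H$. For each $v \in k_W$, pick $n_v \in J(v)$, so that $v \subset \base(\mu_{n_v})$. The extended split-level surface $\hhat F_{\mu_{n_v}}$ is isotopic to $S\times\{0\}$ and contains, restricted to an isotope of $W$, the desired extended split-level surface $\hhat F_v$ passing through $U(v)$. Let $F_v$ denote its non-extended counterpart lying in $M \setminus \UU$.

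For part (1), suppose $u, v \in k_W$ with $u < v$ in the geodesic order and $d_W(u,v) \ge 5$. The gap of $5$ forces $u$ and $v$ to fill $W$ together, so they overlap, whence $J(u) \cap J(v) = \emptyset$. The monotonicity of resolutions (as in the proof of Lemma~\ref{top order and geodesic}) then gives $\max J(u) < \min J(v)$, so the monotonic upward arrangement of extended split-level surfaces yields $\hhat F_u \topprec \hhat F_v$ and disjointness. Let $Q \subset M$ be the region between them. To see that $Q$ is a $W$-product region, I invoke the nesting structure of hierarchies: any geodesic of $H$ whose domain overlaps $W$ and whose active interval meets $[\max J(u), \min J(v)]$ must be subordinate to $k_W$, hence supported in $W$. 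In particular, no elementary move in this range affects $\partial W$, so the tubes $U(\partial W)$ form the vertical boundary of $Q$, and Lemma~\ref{unknotted subsurface} promotes the combinatorial product structure to a homeomorphism $W \times [0,1] \to Q$ with $\partial W \times [0,1]$ landing in $U(\partial W)$.

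For part (2), pick intermediate simplices $u = w_0, w_1, \ldots, w_N = v$ along $k_W$ at $\CC(W)$-spacings in $[5,10]$, so that $N$ is comparable to $d_W(u,v)$. The surfaces $\hhat F_{w_i}$ lie in $Q$ and are monotonically ordered by part~(1). Between $\hhat F_{w_i}$ and $\hhat F_{w_{i+1}}$ the resolution executes at least one elementary move along $k_W$, so at least one complete block of $M$ lies between them. Because blocks fall into finitely many isometry classes, each block contributes at least a uniform positive amount to $\thd_{M \setminus \UU}$, and summing gives $\thd_{M \setminus \UU, Q}(F_u, F_v) \ge c_1 d_W(u, v)$ for some $c_1 = c_1(S)$. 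Part~(3) follows by transporting this estimate through the bilipschitz model map $h$: Theorem~\ref{bilipschitz model} makes $h$ a $K_h$-bilipschitz homeomorphism sending model tubes of short core curves to Margulis tubes, so for a suitable $\ep'$ depending on $\ep$, $K_h$ and $\ep_h$, the parts of $h(M \setminus \UU)$ relevant to the length computation lie in $N_{thick(\ep)}$, and the bilipschitz distortion bound gives $\thd_{N_{thick(\ep)}, R}(G_u, G_v) \ge c_2 d_W(u, v)$ with $c_2 = c_2(S, \ep)$.

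The hardest step is part~(1), namely confirming that $Q$ is a genuine $W$-product region rather than a $W$-bundle with possibly nontrivial behavior near $\partial W$. This rests on two ingredients working in tandem: the active-interval structure of geodesics in the hierarchy, which controls which geodesics can participate during $[\max J(u), \min J(v)]$ and rules out interference from geodesics whose domains properly contain $W$ or cross $\partial W$, and the topological unknotting criterion of Lemma~\ref{unknotted subsurface}, which converts the combinatorial product structure into the required geometric homeomorphism with $\partial W \times [0,1] \subset U(\partial W)$.
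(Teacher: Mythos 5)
Your construction of the surfaces and your arguments for parts (2) and (3) run along essentially the same lines as the paper's (definite separation per step of $k_W$, summed over roughly $d_W(u,v)/5$ steps, then transported through the $K_h$-bilipschitz map using $h^{-1}(N_{thick(\ep_h)})\supset M\setminus\UU$). The genuine problem is part (1). The paper does not reprove the product-region statement; it cites Proposition 4.15 of \cite{ELC2}, where the pair $\hhat F_u,\hhat F_v$ is a special case of a ``cut system,'' and that proposition is exactly what delivers disjointness, the $W$-product structure anchored on $U(\partial W)$, and the agreement of $\topprec$ with the order of vertices. Your substitute argument has two concrete defects. First, the claim that any geodesic of $H$ whose domain overlaps $W$ and is active during $[\max J(u),\min J(v)]$ must be subordinate to $k_W$, hence supported in $W$, is false: the main geodesic of $H$ (domain $S$) overlaps $W$ and is active at all times, as are the geodesics in the subordinacy chain between it and $k_W$; so the assertion that ``no elementary move in this range affects $\partial W$'' does not follow from the stated nesting property, and in any case moves on domains disjoint from $W$ also occur in this range.

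Second, neither disjointness nor the product structure follows formally from $\max J(u)<\min J(v)$ together with monotonicity of the resolution. Monotonicity says each elementary move pushes a subsurface upward; it does not prevent two split-level surfaces taken at different times from sharing level three-holed spheres, and ruling this out is where the spacing hypothesis $d_W(u,v)\ge 5$ enters (the induced pants decompositions of $W$ can share no curve), an argument you never make. More seriously, Lemma \ref{unknotted subsurface} cannot ``promote the combinatorial product structure to a homeomorphism'': that lemma only extends an embedding of $W$ with unknotted boundary to an embedding of $S$ isotopic to a level surface; it says nothing about the region $Q$ between two disjoint embedded isotopes of $W$ being homeomorphic to $W\times[0,1]$ with $\partial W\times[0,1]$ carried into $U(\partial W)$, nor about the induced map on $\pi_1(W)$ being the right one, nor about which surface is on top. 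That is precisely the content of the cut-system machinery of \cite[\S 4]{ELC2}, so as written your proof of (1) has a gap that must be filled either by invoking Proposition 4.15 of \cite{ELC2} (as the paper does) or by supplying a genuine 3-manifold-topology argument in its place.
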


\begin{proof}
Each simplex
$v$ in $k_W$ can be extended to a marking $\mu(v)$ in $W$, and we can
let $\hhat F_v$ denote $\hhat F_{\mu(v)}$ with a slight abuse of
notation. For $u,v$ separated by at least 5, the pair $\hhat F_u$ and
$\hhat F_v$ form a special case of the ``cut systems'' described in
Section 4 of \cite{ELC2}.
Proposition 4.15 of \cite{ELC2}  implies that the surfaces
are disjoint, form the horizontal boundary of a $W$-product regions, and that the topological order
agrees with the ordering of vertices. This gives us (1).

To prove (2), we first observe that there is a definite lower bound $b_0$ on
separation between surfaces in the product region, namely
\begin{equation}\label{separate Fs}
\thd_{M\setminus\UU,Q}(\hhat F_u,\hhat F_v) > b_0 > 0
\end{equation}
when $d_W(u,v)\ge 5$. To see this, note that in $M$ every tube is
separated by a definite distance $b_1>0$ from every other tube -- this
is a consequence of the uniform geometry of the blocks that compose
$M\setminus\UU$.
Similarly, each level 3-holed sphere in a split-level surface has a
$b_2$-neighborhood which meets no other 3-holed spheres. We conclude
from this that the union of $\hhat F_u$ with all the tubes associated
to the base of its marking has a regular neighborhood of definite
width within $M\setminus \UU$. Since $d_W(u,v)\ge 5$, the two markings
cannot share any base curves, which implies that these regular
neighborhoods are disjoint. This gives (\ref{separate Fs}).

Now suppose that $d_W(u,v) \ge 5n$. Then by suitably subdividing the
interval $[u,v]$ in $k_W$ we 
we can subdivide their product region into a sequence of $n$ product
regions, each with the definite separation given by (\ref{separate Fs}). This suffices to give (2).

\medskip

Because $h$ is $K$-bilipschitz, we have the inequality
$$
\thd_{N_{thick(\ep_h)},R} \ge {1\over K}
\thd_{h^{-1}(N_{thick(\ep_h)}),Q}.
$$ 
Now since (by Theorem \ref{bilipschitz model})  $h^{-1}(N_{thick(\ep_h)})$ contains $M\setminus\UU$, we have
$$
\thd_{h^{-1}(N_{thick(\ep_h)}),Q} \ge \thd_{M\setminus\UU,Q}. 
$$
We conclude that 
$$
\thd_{N_{thick(\ep_h)},R}(G_u,G_v) > {1\over K}\thd_{M\setminus\UU,Q}(F_u,F_v)
$$
which gives (3). 

\end{proof}

We next observe that any large enough $W$-product region 
gives rise to a pants decomposition of $W$ consisting of hierarchy curves.

\begin{lemma}{pants hierarchy}
Given a compact surface $S$ and $\ep>0$, there exist $d_0=d_0(S,\ep)>0$ and
$d_1=d_1(S,\ep)>0$ with
the following properties. Let $\rho\in AH(S)$ have end invariants $\nu^\pm$ and associated
model manifold $M=M(\nu^\pm)$. Then, if $W$ is an essential non-annular subsurface of $S$,
$Q$ is a $W$-product region for $M$, 
\hbox{$z\in Q\setminus\UU$},
and 
$$\thd_{M_{thick(\ep)},Q}(z,W\times \{0,1\})>d_0,$$
then there exists a pants decomposition $\Gamma$ of $W$ so that
$\Gamma \subset \CH$ and \hbox{$U(\Gamma)\subset Q$.}

Similarly, if $h:M\to N=N_\rho$ is the model map, $R$ is a $W$-product region for $N$,
\hbox{$z\in R\cap N_{thick(\ep)}$} and 
$$\thd_{N_{thick(\ep)},R}(z,h(W\times \{0,1\}))>d_1,$$
then there exists a pants decomposition $\Gamma$ of $W$ so that
$\Gamma \subset \CH$ and \hbox{$h(U(\Gamma))\subset R$}.
\end{lemma}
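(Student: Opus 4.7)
The plan is to extract $\Gamma$ from the base of a marking $\mu_n$ in a resolution of $H(\nu^\pm)$ whose associated extended split-level surface $\hhat F_{\mu_n}$ passes close to $z$. By Lemma \ref{split level nearby}, I can choose $n$ with $d(z,F_{\mu_n})<c_0$; set $\Sigma=\base(\mu_n)$, fix a nearest point $p\in\hhat F_{\mu_n}$ to $z$ (which lies in $Q$ once $d_0>c_0$), and let $C$ be the component of $\hhat F_{\mu_n}\cap Q$ containing $p$. The goal is to show that $C$, viewed via the isotopy $\hhat F_{\mu_n}\cong S\times\{0\}$ composed with the product projection of $Q$, is identified with a subsurface isotopic to $W$; granting this, the set $\Gamma:=\Sigma\cap\interior(W)$ is a pants decomposition of $W$, and the tubes $U(v)$ for $v\in\Gamma$, being wedged between two three-holed-sphere faces of $C\subset Q$, lie in $Q$ after absorbing the maximum tube-extent into $d_0$.

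The central topological step is to show that for $d_0$ sufficiently large, the frontier $\partial C$ in $\hhat F_{\mu_n}$ lies in the vertical boundary $g(\partial W\times[0,1])\subset U(\partial W)$, and not on $\partial_0 Q\cup\partial_1 Q$. I would argue by contradiction: a hypothetical component of $\partial C$ on horizontal $\partial Q$ would, together with $p$, be joined by a path inside $C$; by Lemma \ref{split BDL}, the $\ep$-thick length of this path is at most $b(S,\ep)$, and since the inclusion $\hhat F_{\mu_n}\hookrightarrow M$ is $1$-Lipschitz and $\pi_1$-injective, the intrinsic $\ep$-thick part of $\hhat F_{\mu_n}$ is mapped into $M_{thick(\ep)}$, so the same estimate bounds the thick-length of the image path in $M$. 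Concatenating with a path from $z$ to $p$ of length less than $c_0$, and observing that both segments remain in $Q$ (the second because $C\subset Q$ by construction, the first because $d_0>c_0$), produces a path in $Q$ witnessing $\thd_{M_{thick(\ep)},Q}(z,W\times\{0,1\})\le c_0+b(S,\ep)$, contradicting the hypothesis once $d_0$ exceeds this sum plus the tube-extent buffer needed for the conclusion $U(\Gamma)\subset Q$.

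For statement (2), I would transfer via the bilipschitz model map $h\colon M\to\hhat C_{N_\rho}$ from Theorem \ref{bilipschitz model}. Because $h$ is $K_h$-bilipschitz and sends $M\setminus\UU$ into the thick part of the augmented convex core, the thick-pseudometrics on $R\subset N$ and $Q=h^{-1}(R)\subset M$ are comparable up to the factor $K_h$ (after rescaling $\ep$ accordingly), so taking $d_1=K_h\cdot d_0(S,\ep/K_h)$ reduces statement (2) to statement (1); then $h(U(\Gamma))\subset R$ follows at once from $U(\Gamma)\subset Q$. The main obstacle is the topological claim that $\partial C$ avoids the horizontal boundary of $Q$: it requires careful accounting of the ways $\hhat F_{\mu_n}$ can meet $\partial Q$, using that vertical boundary sits inside $U(\partial W)$ and that $\hhat F_{\mu_n}$ meets tubes only in annular loci indexed by $\Sigma$, together with verifying that the tubes $U(v)$ for $v\in\Gamma$ fit snugly inside $Q$ between their adjacent three-holed spheres.
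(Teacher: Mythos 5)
Your overall strategy (locate a split-level surface near $z$ via Lemma \ref{split level nearby}, trap a piece of it inside $Q$ using the bounded diameter lemma and the thick-distance hypothesis, and read off a pants decomposition from its base) is the paper's strategy, but the central step has a genuine gap. You take $C$ to be the component of $\hhat F_{\mu_n}\cap Q$ containing $p$ and try to rule out frontier points of $C$ on the horizontal boundary by invoking Lemma \ref{split BDL}. That lemma only says that \emph{some} path in all of $\hhat F_{\mu_n}$ between two given points has $\ep$-thick length less than $b$; it gives no bound for a path constrained to lie in $C$. If you instead use the unconstrained split-BDL path and truncate at its first exit from $Q$, the exit may occur through the vertical boundary $g(\partial W\times[0,1])\subset U(\partial W)$ -- which your $C$, unlike the paper's surface, is perfectly capable of meeting, since $\hhat F_{\mu_n}$ enters the tubes $U(\beta)$ for any $\beta\subset\partial W$ lying in $\base(\mu_n)$ -- and then no violation of the hypothesis on $\thd_{M_{thick(\ep)},Q}(z,W\times\{0,1\})$ is produced. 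The paper avoids this exactly by cutting first: it sets $\Delta$ equal to the components of $\base(\mu)$ lying in $\partial W$ and works with the component $Z$ of $\hhat F_{\mu}\setminus U(\Delta)$ near $z$; since an extended split-level surface meets tubes only along its own base annuli, $Z$ is disjoint from \emph{all} of $U(\partial W)$, hence from the vertical boundary of $Q$, so any escape from $Q$ along an intrinsic path in $Z$ (to which Lemma \ref{split BDL} does apply) must cross $W\times\{0,1\}$, giving the contradiction with $d_0>c_0+b$. Repairing your argument essentially forces you back to this cutting step.

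Two further points. First, your justification that $U(\Gamma)\subset Q$ ``after absorbing the maximum tube-extent into $d_0$'' is not valid: model tubes have no uniform size bound (they are large precisely when the corresponding curve is short), and $d_0$ may depend only on $S$ and $\ep$; the containment has to come from the topology of the situation (the tubes of $\Gamma$ meet $Q$ along the annuli of the trapped surface and are disjoint from $U(\partial W)$, hence from the vertical boundary), not from a metric buffer. Second, in the reduction of the hyperbolic case to the model case, taking $d_1=K_h\,d_0(S,\ep/K_h)$ is not quite enough: part (1) requires the basepoint to lie in $Q\setminus\UU$, while $h^{-1}(z)$ is only known to lie in $M_{thick(\ep/K_h)}$, which can meet $\UU$. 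The paper corrects for this by moving $h^{-1}(z)$ along a path of uniformly bounded length (avoiding $U(\partial W)$) to a point $z_0\in M\setminus\UU$ and adding the corresponding term $K_hc_1$ to $d_1$; your version should do the same.
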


\begin{proof}
We first prove our claim in the setting of the model manifold.
Lemma \ref{split level nearby} gives a maximal split-level surface
$F_{\mu}$ in $M$ whose image comes within $c_0$
of $z$.
Let $\Delta$ be the collection
of components of $\base(\mu)$ which are components of $\partial W$.
Let $Z$ be the component of  
$\hhat F_{\mu} \setminus U(\Delta)$ which comes within $c_0$ of $z$.

Since $Z$ is an extended split level surface it meets model tubes only
if they are associated to its base, and by construction no
base curve in $Z$ can be a component of $\boundary W$. Hence,
$Z$ cannot meet $U(\boundary W)$. Lemma \ref{split BDL} implies that
$$\thdiam_{Z_{thick(\ep)}}(Z)<b=b(S,\ep).$$
Therefore, if we choose $d_0>c_0+b$, then $Z\subset Q$.

We conclude from this that (the underlying subsurface associated to) $Z$ is
homotopic into $W$, which 
implies that $Z$ is isotopic to $W$ since $\boundary Z$ is
isotopic into $\boundary W$. This implies that $\base(\mu)$
contains a pants decomposition of $W$ and the lemma follows when $M$ is a model manifold.

We now assume we are in the hyperbolic manifold setting. Let $Q=h^{-1}(R)$.
If $z\in N_{thick(\ep)}\cap R$, then $h^{-1}(z)\subset M_{thick(\ep/K_h)}$ where $K_h$ is
the constant from Theorem \ref{bilipschitz model}. Therefore, there exists $z_0\in M\setminus\UU$
which may be joined to $h^{-1}(z)$ by a path in $M-U(\partial W)$ of length at most $c_1$
(where $c_1$ depends only on $S$ and on $\ep/K_h$). If
$$\thd_{N_{thick(\ep)},R}(z,h(W\times \{0,1\}))>d_1=K_hd_0(S,\ep/K_h)+K_h c_1,$$
then $z_0\in Q$ and
$$\thd_{M_{thick(\ep/K_h)},Q}(z_0,W\times \{0,1\})>d_0(S,\ep/K_h).$$
Then the model manifold case guarantees that there exists 
exists a pants decomposition $\Gamma$ of $W$ so that
$\Gamma \subset \CH$ and \hbox{$U(\Gamma)\subset Q$}. It follows that
\hbox{$h(U(\Gamma))\subset R$} and our proof is complete.

\end{proof}

\section{Controlled hierarchy curve systems}
\label{curve systems}

In this section we provide a tool for directly relating bounded-length curves
on pleated surfaces in a Kleinian surface group to the curves that
occur in the associated hierarchy. Lemma \ref{hierarchy system} says
that in any such pleated surface (or more generally a Lipschitz
surface with fixed bounds) there is a maximal collection of disjoint hierarchy curves, all
of uniformly bounded length, such 
that in the curve complexes of their complementary subsurfaces the projection of the entire
hierarchy is within bounded distance from the set of bounded-length curves.

\begin{lemma}{hierarchy system}
Given a compact surface $S$ and $K>0$ there exists \hbox{$B=B(S,K)>0$} such that if
$X \in \Teich(S)$ is a finite area hyperbolic surface,
$$f:X \to N$$
is a $K$-Lipschitz homotopy equivalence, $\rho=f_*\in {\mathcal D}(S)$
has end invariants $\nu^\pm$ 
and $H=H(\nu^\pm)$ is the associated hierarchy,
then there exists a curve system $\Gamma$ on $X$ such that if $\gamma\in \Gamma$,
then 
$$\gamma\in \CH \ \ \ \ {\rm and}\ \ \ \ \  l_X(\gamma)\le B.$$
Moreover,
if $W$ is a component of $ X\backslash \Gamma$ which is not a thrice-punctured
sphere, then
\begin{enumerate}
\item $\CC(W)$ contains no  curves in $\CH$, and
\item  there exists $\beta \in \CC(W)$ such that $l_X(\beta)\le B$ and 
$${\rm diam}(\pi_W(\beta\cup \CH))\le B$$
\end{enumerate}
\end{lemma}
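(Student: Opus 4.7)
The plan is to construct $\Gamma$ as a maximal disjoint collection of hierarchy curves whose associated model tubes are ``essentially traversed'' by the image of $X$, and then to derive conditions (1) and (2) from the bilipschitz model together with the hierarchy structure developed in Section \ref{back}. Throughout, let $g = h^{-1}\circ f : X \to M$, which is $KK_h$-Lipschitz by Theorem~\ref{bilipschitz model}. Define
$$\Gamma_0 = \{\gamma \in \CH : g^{-1}(U(\gamma))\text{ contains an essential annulus in } X \text{ with core isotopic to } \gamma\},$$
and let $\Gamma$ be any maximal disjoint subset of $\Gamma_0$.

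For the length bound $\ell_X(\gamma)\le B_0$ on $\Gamma_0$, I would use a Gauss--Bonnet/collar argument: for $\gamma\in\Gamma_0$, the annulus $A\subset g^{-1}(U(\gamma))$ contains the $X$-collar of the geodesic representative of $\gamma$ up to a definite width $w_0$ obtained from the uniform model-tube geometry (Theorem~\ref{bilipschitz model}) and the Lipschitz constant $KK_h$; since $\mathrm{Area}(A)\le \mathrm{Area}(X) = 2\pi|\chi(X)|$, the product of core length and collar width is bounded, yielding $\ell_X(\gamma)\le B_0 = B_0(S,K)$.

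For condition (1), I argue by contradiction: if $W$ is a complementary component (not a pants) and $\alpha \in \CH\cap\CC(W)$, then as an essential simple closed curve in $X$, $\alpha$ maps under $g$ to a loop freely homotopic to the core of $U(\alpha)$. Passing to the annular cover of $M$ corresponding to $\alpha$ and using that $g$ is a $\pi_1$-injective homotopy equivalence mapping $X$ into the product $M \subset S\times\R$ (together with the topological lemmas of \S\ref{toplemmas} applied to the lift of $g$), one extracts an essential annulus in $X$ that maps into $U(\alpha)$; hence $\alpha\in\Gamma_0$. Since $\alpha\subset W$ is disjoint from $\Gamma$ and $\Gamma$ is maximal disjoint in $\Gamma_0$, we conclude $\alpha\in\Gamma$, but $\alpha\subset W$ contradicts $\Gamma\cap\CC(W) = \emptyset$.

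For condition (2), once (1) is established $W$ supports no geodesic of $H$, so the contrapositive of Lemma~\ref{large link} gives $d_W(\mu^+,\mu^-)\le A$; Lemma~\ref{curves near geodesic} then yields $\diam \pi_W(\CH)\le A+2M$. Since $\partial W\subset \Gamma$ consists of short-$X$-length curves, the bordered hyperbolic surface $W\subset X$ has short boundary, so Bers' theorem produces $\beta\in\CC(W)$ with $\ell_X(\beta)$ bounded in terms of $\chi(W)$; hence $\beta\in\CC(\rho,L)$ for a uniform $L$. A case analysis using Theorem~\ref{kgccfact} (depending on whether some component of $\partial W$ is $\rho$-shorter than the Margulis constant) together with Lemma~\ref{mu contained 1} places $\pi_W(\beta)$ within bounded distance of $\pi_W(\mu^\pm)$, and hence of $\pi_W(\CH)$. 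Taking $B$ to exceed all constants above gives (2).

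The main obstacles are the two ingredients feeding condition (1): the $X$-length bound on $\Gamma_0$, which requires combining the uniform geometry of model tubes with a Gauss--Bonnet area argument for the preimage annulus; and the topological identification of every hierarchy curve on $X$ with an element of $\Gamma_0$, which rests on a careful annular-cover argument using that $g$ is a $\pi_1$-injective homotopy equivalence into the product $M\subset S\times\R$. Condition (2) is then a fairly routine consequence of the large-link machinery and the contrapositive of Theorem~\ref{kgccfact}.
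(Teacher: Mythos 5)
The central gap is in your argument for condition (1). From the fact that $g(\alpha)$ is freely homotopic to the core of $U(\alpha)$ you cannot conclude that $g(X)$ meets $U(\alpha)$ at all, much less that $g^{-1}(U(\alpha))$ contains an essential annulus with core isotopic to $\alpha$: the image of a homotopy equivalence can be entirely disjoint from any given tube. Concretely, if $\rho$ is doubly degenerate and $\alpha$ is a hierarchy curve supported far out in an end, then $U(\alpha)$ lies far from $g(X)$ and $g^{-1}(U(\alpha))=\emptyset$, yet such an $\alpha$ can perfectly well lie in a complementary component $W$ of your $\Gamma$; the annular-cover step and the lemmas of \S\ref{toplemmas} (which concern maps disjoint from level curves and surfaces) do nothing to force the image into the tube. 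Worse, in a bounded-geometry doubly degenerate example with no short curves your set $\Gamma_0$ can simply be empty (the model tubes have small but definite size, and a $K$-Lipschitz homotopy equivalence need not contain any essential annulus mapping into one of them), so $\Gamma=\emptyset$, $W=S$, and both (1) and (2) fail outright, since the main geodesic of $H$ has vertices in $\CC(S)$ with unbounded projection. What the lemma actually requires in such cases is a pants decomposition of $W$ by hierarchy curves of bounded $X$-length, and producing it is the hard content: the paper obtains it by a compactness argument (remarking the surfaces, extracting limits of $\rho_n|_{\pi_1(Y)}$, the Covering Theorem and geometric limits, and wide $W$-product regions fed into Lemma \ref{pants hierarchy}), none of which your direct tube-preimage construction replaces. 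Your Gauss--Bonnet length bound has a related flaw: an essential annulus inside $g^{-1}(U(\gamma))$ need not contain a definite-width collar of the $X$-geodesic of $\gamma$ (it can be long and thin and far from that geodesic), so it does not bound $\ell_X(\gamma)$.

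There is also a secondary problem in your treatment of condition (2). Granting (1), the contrapositive of Lemma \ref{large link} and Lemma \ref{curves near geodesic} do bound $\diam\pi_W(\CH)$, and a Bers-type argument gives a bounded-length $\beta\in\CC(W)$; but the step placing $\pi_W(\beta)$ within bounded distance of $\pi_W(\mu^\pm)$ is not justified. Theorem \ref{kgccfact} only tells you that $\partial W$ is short when $\pi_W(\CC(\rho,L))$ has large diameter, and (\ref{mu contained 1}) only says that \emph{some} bounded-length curve projects near $\pi_W(\mu^\pm)$; neither controls $d_W(\beta,\mu^\pm)$ for your particular $\beta$. The statement you would need here is essentially Theorem \ref{bounded}, which is proved \emph{using} the present lemma, so invoking it would be circular. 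In the paper this bounded-projection alternative emerges only at the end of the limiting argument (its Case 2), where the entire projected image of $\CH_n$ is shown to stabilize to a fixed finite set.
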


The proof will proceed by contradiction. We assume that we have a sequence $\{\rho_n\}$ where it is not possible
to choose appropriate collections of hierarchy curves for any uniform choice of constants. We then re-mark
and pass to a subsequence so that there is a maximal collection $\Gamma_1$ of curves so that $l_{\rho_n}(\Gamma_1)\to 0$ and
if $Y_1$ is a component of $S-\Gamma_1$, then $\{\rho_n|_{\pi_1(Y_1)}\}$ is convergent. For large enough values of
$n$, $\Gamma_1$ are hierarchy curves in $H_n$. 
If $\lim\rho_n|_{\pi_1(Y_1)}$ is geometrically infinite, 
we pull-back a wide $Y_1$-product region from the corresponding 
end of the limit manifold and consider a split-level
surface passing through the middle of the product region in the approximates to find a pants decomposition of $Y_1$
by hierarchy curves for all
large enough values of $n$.
If $\lim\rho_n|_{\pi_1(Y_1)}$ is geometrically finite, then the set of
projections of bounded length curves  to any subsurface of $Y_1$ is
finite and our result follows as well.

\begin{proof}
We suppose that, for some $S$ and $K$, it is not possible to choose such a value of $B$ and proceed to find a contradiction. So assume there exists a sequence $\{f_n:X_n\to N_n\}$
of $K$-Lipschitz homotopy equivalences with associated representations $\{\rho_n=(f_n)_*\}$
and hierarchies $\{H_n\}$,
and $B_n\to\infty$ such that, for each $n$, one cannot find a disjoint collection of curves
in $H_n$ which have length at most $B_n$ on $X_n$  whose complementary regions
which are not thrice-punctured spheres have properties
(1) and (2) with constant $B_n$.

By passing to a subsequence and remarking the $X_n$ we can assume
that there is a curve system
$\Gamma_0$ on $S$ such that $\ell_{X_n}(\Gamma_0)\to 0$ and there is a uniform
lower bound on the length of any homotopically non-trivial, non-peripheral curve disjoint from 
$\Gamma_0$. We may further remark the $X_n$ by homeomorphisms of 
$S$ fixing $\Gamma_0$ to guarantee that if $\gamma$ is any fixed curve in $S\ssm\Gamma_0$,
then $\{\ell_{X_n}(\gamma)\}$ is bounded (where the bound depends on $\gamma$).
Notice that, by Theorem \ref{bilipschitz model}, each curve in $\Gamma_0$ eventually lies in the  set $\CH_n$
of hierarchy curves.
If each component of $S\ssm \Gamma_0$ is a thrice-punctured sphere, then we have
already achieved a contradiction.

We now focus on a component $Y$ of $S\ssm \Gamma_0$ which is not a thrice-punctured
sphere. Since each curve on $Y$ has bounded length on $X$, we may pass to a
subsequence so that 
$\{\rho_n|_{\pi_1(Y)}\}$ converges (up to conjugation). Let $\rho_Y\in AH(Y)$
be the limit of (the subsequence) $\{\rho_n|_{\pi_1(Y)}\}$.

Let $\Gamma_1$ be a maximal collection of disjoint simple closed curves on $Y$ such that
if $\gamma\in\Gamma_1$, then $\rho_Y(\gamma)$ is parabolic.
Notice that if $\gamma\in\Gamma_1$, then eventually $\gamma\in \CH_n$ and 
$\ell_{X_n}(\gamma)$ is bounded independent of $n$. We are again finished if
every component of $Y\ssm\Gamma_1$ is a thrice-punctured sphere.

Given a component $Y_1$ of $Y\ssm\Gamma_1$ which is not a thrice-punctured sphere,
we will construct a further subsequence and a curve system $\Gamma_2$
in $Y_1$  consisting of hierarchy curves in $\CH_n$ for all $n$
such that if 
$W$ is a component of $Y_1 \ssm \Gamma_2$ which is not a thrice punctured sphere, then $\CC(W)$ contains  no curves
in $\CH_n$ and if $\beta\in\CC(W)$ then there is an upper bound on
${\rm diam}(\pi_W(\beta\union\CH_n))$ for all $n$.
As we can apply this procedure iteratively to each component of $Y\ssm \Gamma_1$ which is
not a thrice-punctured sphere, this will achieve the desired
contradiction and complete the proof.

Fix, then,  a component $Y_1$ of $Y\ssm\Gamma_1$ which is not a thrice-punctured sphere.
We first claim that, for each $\alpha\in\CH_n$ which overlaps $Y_1$, 
there exists a curve $\psi_n(\alpha)$ in $\CC(Y_1)$
such that $\ell_{\rho_n}(\psi_n(\alpha))$ and
$d_{Y_1}(\psi_n(\alpha),\alpha)$ are uniformly bounded (independently
of $\alpha$ and $n$). 

Recall that $\ell_{\rho_n}(\alpha)\le L_0$ for all $\alpha\in \CH_n$ (see Theorem
\ref{bilipschitz model}). Hence we can let $\psi_n(\alpha)=\alpha$ if $\alpha$ is already in
$\CC(Y_1)$. In general, 
Lemma 4.1 of \cite{minsky:kgcc}
provides for each $\rho_n$ and each $\alpha\in \CH_n$ a pleated surface
$g:Z\to N_n$ (in the homotopy class of $f_n$, where $Z$ denotes a
hyperbolic structure on $S$) mapping $\boundary Y_1$ geodesically,
so that every minimal $Z$-length proper arc $\tau$ in $Y_1$ satisfies
a bound $d_{\AAA(Y_1)}(\tau,\alpha) \le c$, where $c$ depends only on $S$ and $L_0$.

Now given our upper bound on $\ell_{\rho_n}(\boundary Y_1)$
(and hence on $\ell_Z(\boundary Y_1)$), we
can combine arcs in $\tau$ and $\boundary \collar(\boundary Y_1)$ to
obtain an essential simple closed curve $\alpha'$ in $Y_1$ with 
$\ell_{\rho_n}(\alpha')\le \ell_Z(\alpha') \le d'$ and $d_{Y_1}(\alpha,\alpha') \le c'$, for
uniform $c'$ and $d'$.  This is the desired $\psi_n(\alpha)$. 

We can break into two cases now:

{\bf Case 1:}
There is a sequence $\alpha_n\in \CH_n$ 
such that $\alpha'_n = \psi_n(\alpha_n)$ takes on infinitely many values. 
After taking a further subsequence we can assume that
$\{\alpha'_n\}$ converge in $\PML(Y_1)$ to a lamination $\mu$.
Let $V$ be the support
of $\mu$. Using an argument of Kobayashi and Luo (see
\cite[\S4.3]{masur-minsky}), $d_V(\beta,\alpha'_n) \to \infty$, for
any fixed $\beta$,  and
hence, by Theorem \ref{kgccfact},  $\ell_{\rho_n}(\boundary V)$ goes to 0. Since
there are no further parabolics within $Y_1$, this means that $V=Y_1$,
and $\mu$ is filling in $Y_1$. Since $\ell_{\rho_n}(\alpha'_n)$ is
bounded while the length of $\alpha'_n$ in the fixed surface $Y_1$
goes to $\infty$, continuity of length \cite{Brockcont} implies that
$\ell_{\rho_Y}(\mu) = 0$, and 
therefore that $N_{Y}^0$ has a degenerate end with
base surface $Y_1$.

The idea now is that this degenerate end will correspond to a part of
the model manifold from which we can extract a pants decomposition of
$Y_1$ consisting of hierarchy curves. The details of this are a bit
delicate because we have to consider how the hierarchies $H_n$
interact with the structure of this limiting degenerate end. 

We may assume, after passing to a further subsequence, that $\{N_n\}$ converges geometrically to
$N_G$ and that there is a covering map $p:N_Y\to N_G$.
The Covering Theorem \cite{thurston-notes,canary-cover} can then be used to show
that there is a neighborhood  of this degenerate end which embeds in $N_G$ 
(see, for example, the proof of  Proposition 6.10 in \cite{ELC2}). Let $\cE$ be the image of
this neighborhood in $N_G$.
If $Y_1$ is identified with the interior of a compact surface $\bar Y_1$,
one may further assume that the closure of $\cE$ is homeomorphic to
$\bar Y_1 \times [0, \infty)$.
Let $\boundary_0\cE$  be the image of $\bar Y_1\times\{0\}$ and $\boundary_1\cE$ be the image of 
$\boundary\bar Y_1 \times [0,\infty)$ under this homeomorphism.

For any fixed   $R \subset \cE$ which is identified with $\bar Y_1\times [0,a]$ for some $a>0$,
for all large $n$
there exist 2-bilipschitz comparison maps 
$$\phi_n:R\to N_n$$
in the homotopy class of $\rho_n\circ(\rho_Y|_{\pi_1(Y_1)})^{-1}$ such
that 
$$\phi_n(R\intersect \boundary\MT(\partial Y_1))\subset\boundary
\MT(\partial Y_1,n)$$
where $\MT(\partial Y_1,n)$ is the collection of Margulis tubes  in $N_n$ associated to
the components of $\partial Y_1$.
(See Lemma 2.8 in \cite{ELC2}.)
Let $H_n$ and $M_n$ be the hierarchy and model manifold associated to $N_n$ and
let $h_n:M_n\to N_n$ be the model map. For all sufficiently large $n$, $\ell_{\rho_n}(\partial Y_1)<\ep_h$,
so $\partial Y_1\subset \CH_n$ and $\MT(\partial Y_1,n)=h_n(U(\partial Y_1)$.
Therefore, $R_n=\phi_n(R)$ is a $Y_1$-product region for $N_n$ for
all sufficiently large $n$.

Since the pseudometric 
$\thd_{(N_G)_{thick(\ep_1/4)},\EE}$ is proper 
(see Lemma \ref{thick dist  proper}),
we may choose $R$ so that there exists $z \in R\cap (N_G)_{thick(\ep_1)}$ with
$$\thd_{(N_G)_{thick(\ep_1/4)},R}(z,  Y_1 \times \{0,a\})) > 2d_1(S,\ep_1/2)$$
where $d_1(S,\ep_1/2)$ is the constant from Lemma \ref{pants hierarchy}. 
For large enough $n$, $z_n=\phi_n(z)\in N_{thick(\ep_1/2)}$ and
$$\thd_{(N_G)_{thick(\ep_1/2)},R_n}(z,  \phi_n(Y_1 \times \{0,a\})) > d_1(S,\ep_1/2).$$
Therefore, by Lemma \ref{pants hierarchy}, there exists a pants decomposition $\Gamma_n$
of $Y_1$ so that  $\Gamma_n\subset \CH_n$  and $h(U(\Gamma_n)) \subset R_n$. 
Since every curve in $\Gamma_n$ has a representative of uniformly bounded length in $R_n$,
it also has a representative of uniformly bounded length in $R$.
Since there are only finitely many such curves in $R$, we can pass to a subsequence such that $\Gamma_n$ 
is a fixed pants decomposition $\Gamma$. The fixed set of curves $\Gamma$ will have uniformly bounded length on $X_n$. 
This  completes the proof in this case.

{\bf Case 2:}
For some $k$, the union $\union_{n\ge k}\psi_n(\CH_n)$ is finite.
Taking a subsequence, we can assume that $\psi_n(\CH_n)$ is a constant
set $\Psi$.
Since $\CH_n\intersect \CC(Y_1)$ is
contained in $\Psi$, we may assume that it is constant for all $n$ as
well. 
Let $\Gamma_2$ be a maximal curve system in $Y_1$ whose elements are
in $\CH_n \intersect \CC(Y_1)$. 

Suppose first that $\Gamma_2$ is empty. Then there are no hierarchy
curves in $\CC(Y_1)$. The projection $\pi_{Y_1}(\CH_n)$  lies, for
all $n$, within a
uniform distance of the finite set 
$\psi$, and hence within uniform distance of any fixed curve
$\beta\in\CC(Y_1)$. This concludes the proof in this case. 

If $\Gamma_2$ is nonempty, consider any component $W$
of $Y_1\ssm \Gamma_2$. By maximality of $\Gamma_2$, $\CC(W)$ contains no hierarchy curves.
We can now repeat the argument replacing $Y_1$ by $W$. We construct a
new collection $\psi_n(\CH_n)$ in $\CC(W)$, and find ourselves either
in case 2 but with $\Gamma_2$ empty, or in case 1.

If it is case 2 we can complete the proof as above, with a uniform
bound on the set of projections of $\CH_n$ into $W$. If we are in case
1, we note that the argument shows that in fact the boundary of $W$
must consist of parabolics for $\rho_Y$, so that in fact $W=Y_1$,
contradicting the fact that $\Gamma_2$ is nonempty.
\end{proof}

\section{Projections of the bounded curve set}
\label{bounded curve set}

In this section, we prove Theorem \ref{bounded}, which we restate here
for convenience: 

\restate{Theorem}{bounded}{
Given $S$, there exists $L_0$ such that for all $L\ge L_0$ there exists
$D=D(S,L)$, such that given $\rho\in AH(S)$ with end invariants $\nu^\pm$ and an essential
subsurface $W\subset S$ which is not an annulus or a pair of pants, 
$$
d_{\rm Haus}\left(\pi_W(\CC(\rho,L)) , \hull_W(\nu^\pm(\rho))\right) \le D.
$$
Moreover, if $d_W(\nu^+(\rho),\nu^-(\rho)) > D$ then 
$\CC(\rho,L) \intersect\CC(W)$ is nonempty and 
$$
d_{\rm Haus}\left(\CC(\rho,L)\intersect\CC(W) , \hull_W(\nu^\pm(\rho))\right) \le D.
$$
}

The main new content of this theorem is the statement
that $\pi_W(\CC(\rho,L))$ is contained in a uniform neighborhood of
$\hull_W(\nu^\pm)$, and for this we use Lemma \ref{hierarchy system},
which gives us a comparison between the short curves in  pleated
surfaces and hierarchy curves.
The other inclusions were already known, and are essentially
consequences of the Bilipschitz Model Theorem \ref{bilipschitz
  model} which relates hierarchy curves to the hyperbolic structure,
and Lemma \ref{curves near geodesic} which controls hierarchies in
terms of subsurface projections. 

The main new ingredient in the proof is Lemma \ref{hierarchy system}. Given a curve
$\alpha\in\CC(\rho,L)$, we consider a pleated surface realizing $\alpha$ and the system $\Gamma$
of hierarchy curves produced by Lemma \ref{hierarchy system}. If some element of $\Gamma$ overlaps
$W$, then the result follows from Lemma \ref{curves near geodesic} which is essentially a version
of Theorem \ref{bounded} for hierarchy curves. If not, then  $\pi_W(\CH)$ has bounded diameter, and Lemma \ref{hierarchy system} provides a bounded
length curve whose projection to $W$ is uniformly near $\pi_W(\CH)$, which again allows us to complete the proof.

\begin{proof}
We first recall (from \ref{mu contained 1} in Section 2) that if $L_0$ is chosen to
be greater than the Bers constant $L_B$, then
\begin{equation}\label{mu contained}
\pi_W(\nu^\pm) \cap \overline{\pi_W(\CC(\rho,L))} \ne\emptyset
\end{equation}
where the closure is in the Gromov closure 
$\overline{\CC(W)} = \CC(W)\union\EL(W)$. 
From this we immediately have
\begin{equation}\label{bounded control mu}
\diam \pi_W(\CC(\rho,L)) \ge d_W(\nu^+,\nu^-).
\end{equation}
We next wish to get an inclusion in one direction, 
\begin{equation}\label{bounded contained}
\pi_W(\CC(\rho,L)) \subset \NN_{d_1} ( \hull_W(\nu^+,\nu^-) )
\end{equation}
for a uniform $d_1$. We recall that Theorem \ref{bilipschitz model} provides
$\ep_h>0$
such that $\CC(\rho,\ep_h) \subset\CH(\nu^\pm)$. Theorem \ref{kgccfact}
gives a constant $K=K(S,\ep_h,L)$ such that if 
$\diam \pi_W(\CC(\rho,L)) > K$, then $\ell_\rho(\gamma) <\ep_h$
for each component of $\boundary W$. Thus we 
suppose for now that $\diam \pi_W(\CC(\rho,L)) > K$, and
therefore that $[\boundary W] \subset\CH(\nu^\pm)$.

Given $\alpha\in\CC(\rho,L)$, let $f:X\to N_\rho$ be
a pleated surface, in the homotopy class of $\rho$, realizing
$\alpha$. Let $\Gamma$ be the curve system provided by Lemma
\ref{hierarchy system},  which consists of curves in $\CH(\nu^\pm)$
whose length in $X$ is at most $B=B(S,1)$.

If a component $\gamma\in \Gamma$ intersects $W$ essentially then,
since the length bound on $\gamma$ and $\alpha$ in $X$ implies a uniform
upper bound on the intersection number of $\gamma$ and $\alpha$ and hence
an uniform upper bound on $d_W(\alpha,\gamma)$ (see \cite[Lemma 2.1]{masur-minsky}),
we obtain a uniform upper bound on $d_W(\alpha,\CH(\nu^\pm))$. 
Lemma \ref{curves near geodesic} gives a uniform bound on the
Hausdorff distance between $\pi_W(\CH(\nu^\pm))$ and $\hull_W(\nu^\pm)$,
so we obtain a uniform upper bound  on $d_W(\alpha,\hull_W(\nu^\pm))$.

If, on the other hand, $W$ is disjoint from $\Gamma$, consider the component $Z$ of
$S-\collar(\Gamma)$ containing $W$. Since, by Lemma \ref{hierarchy system},
$\CC(Z)\cap\CH(\nu^\pm)$ is empty and $[\partial W]\subset\CH(\nu^\pm)$, we must
have $W=Z$.
Moreover, again by Lemma \ref{hierarchy system}, there exists $\beta\in\CC(W)$
such that $l_X(\beta)\le B$ and 
$$\diam_W(\beta\union \pi_W\CH(\nu^\pm)))\le B.$$
The 
length bounds on $\alpha$ and $\beta$ again give a uniform upper bound on
$d_W(\alpha,\beta)$, so we again obtain a uniform upper bound on
$d_W(\alpha,\hull_W(\nu^\pm))$.

Since, we have obtained a uniform upper bound on $d_W(\alpha,\hull_W(\nu^\pm))$
for all $\alpha\in\CC(\rho,L)$, we have established the containment
(\ref{bounded contained}), for some uniform $d_1$. 

Lemma \ref{large link} implies that there exists $A=A(S,L)$ 
such that if \hbox{$d_W(\nu^+,\nu^-) > A$}, then
the hierarchy $H$ contains a geodesic $k_W$ supported on the subsurface
$W$, whose initial and terminal vertices lie within $A$ of
$\pi_W(\nu^-)$ and $\pi_W(\nu^+)$. If 
$\diam(\pi_W(\CC(\rho,L)) > 2d_1 +A$, then (\ref{bounded contained})
implies that $d_W(\nu^+,\nu^-) > A$, and hence that we have $k_W$ in
$H$. 

In this case, assuming $L_0\ge L_h$, Theorem \ref{bilipschitz model} implies that the vertices of $k_W$ are all
contained in $\CC(\rho,L)$, and since the Hausdorff distance between $k_W$ and $\hull_W(\nu^\pm)$
is uniformly bounded (since $\CC(W)$ is Gromov hyperbolic),
we may conclude that there exists a uniform $d_2$ so that
\begin{equation}
\hull_W(\nu^\pm) \subset \NN_{d_2} (\CC(\rho,L)\cap\CC(W) ).
\end{equation}
In particular,
\begin{equation}
\hull_W(\nu^\pm) \subset \NN_{d_2} (\pi_W(\CC(\rho,L))).
\end{equation}
Therefore, if $\diam\pi_W(\CC(\rho,L)) >2 d_1 +K+ A$, then both 
the Hausdorff distance between $\hull_W(\nu^\pm)$ and $\pi_W(\CC(\rho,L))$
and the Hausdorff distance between $\hull_W(\nu^\pm)$ and $\CC(\rho,L)\cap\CC(W)$ are  bounded from
above by $d_1+d_2$. Therefore, we have established our theorem in this case if
we choose \hbox{$D=2d_1+d_2+A+K$}. 

It remains to consider the case that
$\diam\pi_W(\CC(\rho,L)) \le 2d_1+A+K$. However in this case
the conclusion of the theorem is immediate from (\ref{mu contained}).
\end{proof}

\section{Ending laminations in the algebraic limit}
\label{endlams}

We now prove Theorem \ref{limits}
which asserts that ending laminations of geometrically infinite ends arise
as limits of projections of end invariants.

\restate{Theorem}{limits}{
Let $\rho_n\to\rho$ in $AH(S)$.
If $W \subseteq S$ is an essential subsurface of $S$, other than an annulus or a pair of pants,
and $\lambda\in\EL(W)$ is a lamination supported on $W$, the following statements are
equivalent:
\begin{enumerate}
\item
$\lambda$ is a component of $\nu^+(\rho)$.
\item
$\{\pi_W(\nu^+(\rho_n))\}$ converges to $\lambda$
\end{enumerate}
Furthermore we have,
\begin{enumerate}[\indent (a)]
\item \label{converges} 
if $\{\pi_W(\nu^+(\rho_n))\}$ accumulates on $\lambda \in \EL(W)$ then it
  converges to $\lambda$,
\item \label{notincommon}
the sequences $\{\nu^+(\rho_n)\}$ and $\{\nu^-(\rho_n)\}$ do not converge to a common
$\lambda \in \EL(S)$, and
\item \label{iftopnotbottom}
if $W \subsetneq S$ is a proper subsurface then convergence of
  $\{\pi_W(\nu^+(\rho_n))\}$ to $\lambda \in \EL(W)$ implies
  $\{\pi_W(\nu^-(\rho_n))\}$ does not accumulate on $\EL(W)$.
\end{enumerate} 
The same statements hold with ``$+$'' replaced by  ``$-$''.
}

\medskip
We note that we allow the case $W = S$ unless explicitly noted
otherwise.  

For simplicity of notation, we let $\nu^+_n = \nu^+(\rho_n)$ and
$\nu^-_n = \nu^-(\rho_n)$.
If $\lambda$ is a component of $\nu^+(\rho)$, it is not difficult to
show that $\lambda$ is an accumulation point of either
$\{\pi_W(\nu^+_n)\}$ or of $\{\pi_W(\nu^-_n)\}$.  We first show, in
Lemma \ref{moreover}, that it cannot be both.  In order to show that
it is an accumulation point of $\{\pi_W(\nu_n^+)\}$, we use the
Covering Theorem and geometric limit arguments, to pull-back larger
and larger $W$-product regions from $N_\rho$ to the approximates. We
then consider intersections of split-level surfaces with these product
regions to find pairs of hierarchy curves in $\CH_n$, one of which
lies in a bounded set and the other of which approximates $\lambda$,
such that the geodesic representative of the approximation to
$\lambda$ lies above the geodesic representative of the curve in the
bounded set. This allows us to prove that (1) implies (2). On the
other hand, if $\{\pi_W(\nu_n^+)\}$ converges to $\lambda$, we check
that $\lambda$ is the ending lamination of a geometrically infinite
end of $N_\rho$. Then, using the fact that (1) implies (2), we see
that the end must be upward-pointing, which establishes that (2)
implies (1).

\begin{proof}
We first observe that if $\{\pi_W(\nu_n^+)\}$ converges to $\lambda$,
then $\{\pi_W(\nu_n^-)\}$ cannot accumulate at $\lambda$.

\begin{lemma}{moreover}{}{}
  Suppose that $\{\rho_n\}$ is a convergent sequence in $AH(S)$.  If
  $W \subseteq S$ is a (not necessarily proper) subsurface of $S$,
  $\{\pi_W(\nu_n^+)\}$ converges to $\lambda\in\EL(W)$, then
  $\pi_W(\nu_n^-)$ does not accumulate at $\lambda$.  Similarly, if
  $\{\pi_W(\nu_n^-)\}$ converges to $\lambda\in\EL(W)$, then
  $\{\pi_W(\nu_n^+)\}$ does not accumulate at $\lambda$.
\end{lemma}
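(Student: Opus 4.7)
\noindent The plan is to argue by contradiction. Suppose $\pi_W(\nu_n^+)\to\lambda\in\EL(W)$ and, along a subsequence, $\pi_W(\nu_n^-)\to\lambda$ as well; I will exhibit a single vertex of $\CC(W)$ whose subsurface-projection distance to $\hull_W(\nu_n^\pm)$ stays bounded in $n$ while the same distance is forced to tend to infinity, contradicting Theorem~\ref{bounded}.

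\smallskip

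\noindent The steps go as follows. First, because $\EL(W)\ne\emptyset$, $W$ is neither an annulus nor a pair of pants (nor a thrice-punctured sphere), so $W$ admits infinitely many isotopy classes of essential simple closed curves; since $\rho(\pi_1(S))$ is a finitely generated Kleinian group, at most finitely many conjugacy classes in $\pi_1(W)$ are $\rho$-parabolic, so one can choose an essential $\gamma\subset W$ with $\ell_\rho(\gamma)<\infty$. Because $W$ is non-annular, $\pi_W(\gamma)=\gamma$ as a vertex of $\CC(W)$. Next, algebraic convergence $\rho_n\to\rho$ gives $\ell_{\rho_n}(\gamma)\to\ell_\rho(\gamma)$, so for any $L\ge\max\{L_0,\,\ell_\rho(\gamma)+1\}$ one has $\gamma\in\CC(\rho_n,L)$ for all sufficiently large $n$, and Theorem~\ref{bounded} then provides $D=D(S,L)$ with
$$
d_W\!\left(\gamma,\ \hull_W(\nu_n^\pm(\rho_n))\right)\ \le\ D
$$
for all such $n$. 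Finally, by Klarreich's identification of $\partial\CC(W)$ with $\EL(W)$, the convergence of both $\pi_W(\nu_n^+)$ and $\pi_W(\nu_n^-)$ to the common boundary point $\lambda$ forces the Gromov product $(\pi_W(\nu_n^+)\mid\pi_W(\nu_n^-))_\gamma\to+\infty$; in a $\delta$-hyperbolic space this product is, up to additive $\delta$, the distance from the basepoint to any geodesic joining the two sequences. Hence $d_W(\gamma,\hull_W(\nu_n^\pm))\to\infty$, contradicting the uniform bound.

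\smallskip

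\noindent The main thing to watch is the first step, producing the benchmark curve $\gamma$: one must ensure $\gamma$ lies essentially in $W$ (so that $\pi_W(\gamma)=\gamma$ is a \emph{fixed} vertex of $\CC(W)$) and has finite $\rho$-length (so that length continuity eventually places $\gamma$ in $\CC(\rho_n,L)$). Both of these follow from the hypothesis that $W$ is non-annular and non-pants together with the standard fact that a finitely generated Kleinian group has only finitely many conjugacy classes of parabolic subgroups; the remaining steps are bookkeeping with Theorem~\ref{bounded} and the standard behavior of geodesics whose endpoints converge to a common ideal point in a Gromov-hyperbolic space.
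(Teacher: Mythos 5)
Your proposal is correct and follows essentially the same route as the paper: fix a curve in $W$ whose $\rho_n$-lengths stay uniformly bounded (by algebraic convergence), note that if both $\pi_W(\nu_n^+)$ and $\pi_W(\nu_n^-)$ converged to the common point $\lambda\in\partial\CC(W)$ then the distance from that curve to $\hull_W(\nu_n^\pm)$ would diverge, and contradict Theorem~\ref{bounded}. Your extra care in choosing $\gamma$ non-parabolic is harmless but not needed, since continuity of length already gives a uniform bound on $\ell_{\rho_n}(\gamma)$ for any fixed curve $\gamma\subset W$.
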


\begin{proof}{}
  Let $W$ be any non-annular subsurface of $S$ that is also not a
  three-holed sphere.  Suppose that $\{\pi_W(\nu_n^+)\}$ converges to
  $\lambda\in\EL(W)$.  If $\{\pi_W(\nu_n^-)\}$ also accumulates at
  $\lambda$, then we can pass to a subsequence, still called
  $\{\rho_n\}$, such that both $\{\pi_W(\nu_n^+)\}$ and
  $\{\pi_W(\nu_n^-)\}$ converge to $\lambda$. Let $\rho=\lim \rho_n$.

  Let $\alpha$ be a curve on $W$. Then the distance of $\alpha$ to any
  geodesic joining $\pi_W(\nu_n^+)$ and $\pi_W(\nu_n^+)$ diverges to
  $\infty$.  Since there exists some $L\ge L_0$ such that
  $l_{\rho_n}(\alpha)\le L$ for all $n$, this contradicts Theorem
  \ref{bounded}.

The proof of the other case is exactly the same.
\end{proof}

Suppose that $\lambda\in\EL(W)$ is a component of $\nu^+(\rho)$ --
that is, $N_\rho^0$ has an upward-pointing end with base surface $W$
and  ending lamination $\lambda$.  In order to show that (1) implies (2),
it suffices to show that any subsequence
of $\{\rho_n\}$  has a further subsequence 
$\{\rho_{n_k}\}$ such that $\pi_W(\nu_{n_k}^+) \to \lambda$. 

Given any subsequence of $\{\rho_n\}$,  we may pass to a subsequence (still denoted $\{\rho_n\}$) such that
$\{ N_n\}=\{N_{\rho_n}\}$ converges geometrically to a manifold
$N_G$, which is covered by $N_\rho$. It is a consequence of the Covering Theorem \cite{thurston-notes,canary-cover}
(see Proposition 5.2 in \cite{AC-cores})
that there is a neighborhood of the geometrically infinite end of $N^0_\rho$
with ending lamination $\lambda$ which embeds in $N_G$. Let $\cE$ be the image of this neighborhood
in $ N_G$.
We may identify $\cE$ in an orientation-preserving way with $W\times[0,\infty)$.

After passing to a subsequence, we may assume that there exist 2-bilipschitz maps
$$\phi_n : W \times [0,n+1] \to N_n$$
so that $\phi_n(\partial W\times [0,n])\subset \MT(\partial W,n)$ where
$\MT(\partial W,n)$ is the collection of Margulis tubes in $N_n$ associated to the curves in $\partial W$.
After passing to a further subsequence we can adjust the product structure on $\cE$ and choose points
$$z_n \in  (N_G)_{thick(\ep_1)}\cap (W \times [n,n+1])$$
so that $\phi_n(W \times [k,k+1])$ is a $W$-product region,
$\phi_n(z_k)\in (N_G)_{thick(\ep_1/2)}$, and
$$\thd _{N_{thick(\ep_1/2)},\phi_n(W\times [k,k+1])}(\phi_n(z_k),\phi_n(W\times\{k,k+1\}))>d_1(S,\ep_1/2)$$
for all $k=0,\ldots, n$. In other words,
$(\phi_n(W \times [k,k+1]), \phi_n(z_k))$ satisfy the assumptions of Lemma \ref{pants hierarchy}. (See the proof of Lemma \ref{hierarchy system}.)

Let $h_n:M_n\to N_n$ be the model map provided by Theorem \ref{bilipschitz model}.
Lemma \ref{pants hierarchy} guarantees that for all $n$ there exists $\alpha_n \in\CH_n$ with 
\hbox{$h_n(U(\alpha_n)) \subset \phi_n(W \times [0,1])$}. 
Each $\alpha_n$ has a representative  in $W \times [0,1]\subset \cE$ of uniformly
bounded length. As in the proof of Lemma \ref{hierarchy system} there will be a finitely many such curves and we can pass to a further subsequence such that $\alpha_n$ is a fixed curve $\alpha$.

Similarly , for all $n$, Lemma \ref{pants hierarchy} implies that there exists a curve $\beta_n \in\CH_n$ with 
$h_n(U(\beta_n)) \subset \phi_n(W \times [n,n+1])$. Then $\{\phi_n^{-1}(\beta_n)\}$ is a sequence of bounded length curves
exiting $\cE$, so we have that $\beta_n \to \lambda$. 
In particular, for large $n$, $\alpha$ and $\beta_n$ overlap.
Furthermore, by construction $\phi^{-1}_n(h_n(U(\beta_n)))$ lies above 
$\phi^{-1}_n(h_n(U(\alpha)))$ in $\cE$.
Lemma \ref{unknotted subsurface} implies that $\phi_n(W\times \{1\})$ extends to an embedded surface $X$
isotopic to a level surface in $N_n$. Since $h_n(U(\alpha_n))$ and $h_n(U(\beta_n))$ lie in
a collar neighborhood of $\phi_n(W\times \{1\})$ we may assume they are disjoint from $X$.
Since each $\phi_n$ is orientation-preserving, this implies that
$$h_n(U(\alpha))\topprec h_n(U(\beta_n))$$
in $N_n$. Since $h_n$ is orientation-preserving, we may conclude that
$$U(\alpha)\topprec U(\beta_n)$$
in $M_n$.

Since $d_W(\alpha,\beta_n)\to\infty$ and $\alpha,\beta_n\in \pi_W(C(\rho_n,L_0))$,
Theorem \ref{bounded} implies that $d_W(\nu_n^+,\nu_n^-)\to\infty$.
We may pass to a subsequence so that \hbox{$d_W(\nu_n^+,\nu_n^-)\ge A(S)$} for all $n$, so
Lemma \ref{large link} implies that $H_n$ 
contains a geodesic $k_n$ with domain $W$. 
Lemma \ref{curves near geodesic}
implies that  $d_W(\pi_{k_n}(\alpha),\alpha)$ and 
$d_W(\pi_{k_n}(\beta_n),\beta_n)$ are uniformly bounded, where
$\pi_{k_n}$ is the projection from $\CC(S)$ to $k_n$ through
$\CC(W)$. 
After identifying $k_n$ with an interval,
Lemma \ref{top order and geodesic} implies that there exists $r$ so that
$$\pi_{k_n}(\alpha)<\pi_{k_n}(\beta_n)+r.$$

Now, since $\beta_n\to\lambda\in\EL(W)$,  it follows
that $\pi_{k_n}(\beta_n)\to\lambda$. Since $\pi_{k_n}(\alpha)<\pi_{k_n}(\beta_n)+k$,
$\beta_n$ lies between $\alpha$ and the terminal vertex $\tau_n$ of $k_n$ for all large
enough $n$.
Recalling that all the $\pi_{k_n}(\alpha)$ lie in a finite diameter set in $\CC(W)$,
we may conclude that the terminal vertex $\tau_n$ also converges to $\lambda$.
(Here, we use that $\CC(W)$ is hyperbolic and Klarreich's theorem \cite{klarreich}
identifying $\partial\CC(W)$ with $\EL(W)$.)
Since, by Lemma \ref{large link}, $d_W(\tau_n,\nu_n^+)$ is uniformly bounded, we
further conclude that $\{\pi_W(\nu_n^+)\}$ converges to $\lambda$, as desired. 
This completes the proof that (1) implies (2).

\medskip

Now suppose that 
$\{\pi_W(\nu_n^+)\}$ converges to $\lambda\in\EL(W)$. 
Lemma \ref{moreover} then implies that $\{\pi_W(\nu_n^-)\}$ does not accumulate
at $\lambda$. Therefore, $d_W(\nu_n^+,\nu_n^-)\to \infty.$
Lemma \ref{large link} implies that, for all large enough $n$, $H_n$ contains a geodesic $k_n$ with
base surface $W$.
For each $n$ choose a vertex $\beta_n$ of $k_n$ so that
$\beta_n\to \lambda$. 
By Klarreich's theorem,   there is a subsequence so that $\{\beta_n\}$
converges projectively to  
a measured lamination $\nu$ on $W$ whose support is $\lambda$ -- that is,
$\{\beta_n/l_X(\beta_n)\}$ converges to $\nu$ where $X$ is a fixed finite area hyperbolic
metric on $W$. By continuity of length \cite{Brockcont}, 
$l_\rho(\nu)=\lim l_{\rho_n}(\beta_n)/l_X(\beta_n) = 0$, so
$\lambda$ is unrealizable in $\rho$. 
This implies that $\lambda$ is an ending lamination for an end based
on $W$ (see \S\ref{back}).

If this end were
downward-pointing then, since (1) implies (2) (applied in the downward-pointing case),
$\{\pi_W(\nu_n^-)\}$ would converge to $\lambda$, which would contradict
Lemma \ref{moreover}. Therefore,  the end must be upward-pointing. This completes the proof
that (2) implies (1).

In order to establish Claim~(\ref{converges}), we assume that
$\{\pi_W(\nu^+_n)\}$ accumulates at $\lambda\in\EL(W)$. By applying implication (2) implies
(1) to a subsequence $\{\rho_{n_j}\}$ where $\{\pi_W(\nu^+_{n_j})\}$ converges to $\lambda$, we see that $\lambda$ is a component of $\nu^+(\rho)$, and 
so, applying  implication (1) $\implies$ (2), we see that the entire sequence
$\{\pi_W(\nu^+_n)\}$ converges to $\lambda$.
Claim~(\ref{notincommon}) in the statement follows
immediately from Lemma \ref{moreover}.  Finally, for
claim~(\ref{iftopnotbottom}), note that for $W$ a proper subsurface of
$S$, if $\{\pi_W(\nu_n^+)\}$ converges to $\lambda^+ \in \EL(W)$ and
$\{\pi_W(\nu_n^-)\}$ converges to $\lambda^- \in \EL(W)$, then
$\lambda^+$ is a component of $\nu^+(\rho)$ and $\lambda^-$ is a
component of $\nu^-(\rho)$ by an application of (2) $\implies$ (1).
Therefore, for some boundary component $\gamma$ of $W$ that is
non-peripheral in $S$, $\gamma \times \{0\}$ is isotopic in $S \times
I$ into the annuli $P^+$ and $P^-$ determined by the relative compact
core for $N_\rho$, contradicting that no two components of $P^+$ and
$P^-$ are isotopic.
\end{proof}

\section{Overlapping curves}
\label{overlap}

In this section we prove Theorem \ref{topistop}, which
states that if a curve $\alpha\in\CC(\rho,L)$ overlaps and lies above $\boundary W$ in  $N_\rho$,
then $\pi_W(\alpha)$ is uniformly close to $\pi_W(\nu^+(\rho))$.

\restate{Theorem}{topistop}{
Given $S$ and $L>0$ there exists $c$ such that, given
$\rho\in AH(S)$, an essential subsurface
$W\subset S$ which is not  a pair of pants, 
and a curve $\alpha\in\CC(\rho,L)$ such that
$\alpha^*$ lies above the geodesic representative of some
component of $(\partial W)^*$ that it overlaps, then
$$d_W(\alpha,\nu^+(\rho))\le c.$$
Furthermore, if $W$ is not an annulus
or a pair of pants, $\alpha\in\CC(\rho,L)$ overlaps $\partial W$, and
$$ d_W(\alpha,\nu^-) > c$$
then $\alpha^*$ lies above the geodesic representative of every component of $\partial W$
that it overlaps.

The same holds when replacing ``above'' with ``below'' and $\nu^+$
with $\nu^-$.
}

When $\rho(\alpha)$ or $\rho(\beta)$ is parabolic we interpret the statement
``$\alpha^*$ lies above $\beta^*$'' to mean that, all sufficiently
short representatives of $\alpha$ in $N_\rho$ lie above
all sufficiently short representatives of $\beta$. 
Equivalently, $\alpha^*$ lies above $\beta^*$
if $\alpha$ is in the top end invariant or $\beta$ is in the
bottom end invariant.

In the course of the proof we will notice that if $l_\rho(\alpha)\le L$, $W$ is not an annulus,
and $d_W(\nu^+,\nu^-)$ is sufficiently
large, then $\alpha^*$ either lies above or below all geodesic
representatives of components of $\partial W$ which it overlaps
(see Proposition \ref{curve above or below}).

\medskip

We will first prove the theorem when $W$ is not an annulus. The result
will be straightforward if $d_W(\nu^+,\nu^-)$ is small. 
We sketch the argument when
$d_W(\nu^+,\nu^-)$ is large and therefore all the components of
$\partial W$ lie in $\CH$, are very short, and 
Lemma \ref{W cross sections} gives us a wide $W$-product region.
If $\alpha$ is a curve in the hierarchy
the theorem follows from statement (2) of Lemma \ref{top order and
  geodesic}. If not we may realize $\alpha$ by a pleated surface $f:X
\to N$ and then replace $\alpha$ with a hierarchy curve $\gamma$ that
overlaps $W$ given to us by Lemma~\ref{hierarchy system}. If
$\alpha^*$ overlaps and lies above a component $\beta^*$ of $(\boundary W)^*$ then the
pleated surface also lies above $\beta^*$. Therefore, if $\gamma$ and
$\beta^*$ overlap, $f(\gamma)$ also lies above $\beta^*$ and
the theorem again follows from (2) of Lemma \ref{top order and
  geodesic}.
Most of the work in the proof involves the case when
there is no such $\beta^*$ that both $\alpha$ and $\gamma$ overlap.
For example, it is possible that $\gamma$ lies in
$W$. Here we employ the observation that the pleated surface has
bounded penetration into the wide $W$-product region, and an
application of part (1) of Lemma \ref{top order and geodesic} completes the
proof.

\medskip

In the case that $W$ is an annulus, note that the argument above works
perfectly well as long as the curve $\gamma\in\Gamma$ overlaps the core of
$W$. However, the possibility that $\gamma$ is equal to the core of
$W$, together with the phenomenon of wrapping, force
a completely different approach.  Given a curve $\alpha$ of bounded
length such that $\alpha^*$ lies above $\gamma^*$ (where $\gamma$ is
now the core of $W$), we first note that there does exist a curve $\beta$
of bounded length such that $\beta^*$ also lies above $\gamma^*$, and
that moreover $d_\gamma(\beta,\nu^+)$ is bounded ($\beta$ is
essentially obtained from the top ending data of $N_\rho$, in Lemma
\ref{nearby curve}).  We therefore have to bound
$d_\gamma(\alpha,\beta)$. To do this we consider a ``model manifold''
constructed with ending data $\alpha$ and $\beta$. If
$d_\gamma(\alpha,\beta)$ is large enough then this model will have a
deep tube $U(\gamma)$, and a Lipschitz model map constructed in Lemma
\ref{special model} using a variation on the work in \cite{ELC1} will
take $\boundary U(\gamma)$ to the boundary of the corresponding Margulis tube
$\MT(\gamma)$. The fact that $\alpha^*$ and $\beta^*$ are
both above this tube will be used to show that the map $\boundary
U(\gamma) \to \boundary \MT(\gamma)$ has degree 0, but a Lipschitz map
of degree 0 between tori of bounded geometry cannot have kernel
generated by a very long curve (Lemma \ref{lipschitz_torus}). This
bounds the length of the meridian of $U(\gamma)$, and hence bounds
$d_\gamma(\alpha,\beta)$.

\subsection{Proof in the non-annular case}
\label{overlap nonannular}

When $W$ is not an annulus the theorem will follow from this proposition:
\begin{proposition}{curve above or below}
 If $S$ is a compact surface and $L>0$, then there exists $c_2=c_2(S,L)$ such that
 if $\rho\in AH(S)$,
 $W$ is a non-annular subsurface, 
 $$d_W(\nu^+,\nu^-)>c_2$$
 and $\alpha\in\CC(S,L)$
 overlaps $\partial W$, then either $\alpha^*$ lies above the geodesic representative in $N_\rho$ of
 every component of $\partial W$ it overlaps or  $\alpha^*$ lies below the geodesic representative of
 every component of $\partial W$ it overlaps. 
 
 Moreover, if $\alpha^*$ lies above $(\partial
 W)^*$ then 
$$d_W(\alpha, \nu^+) \le c_2.$$
The same holds when replacing ``above'' with ``below'' and ``+'' with ``-''.
 \end{proposition}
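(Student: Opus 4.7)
The plan is to combine the bilipschitz model machinery of Section \ref{models} with Lemma \ref{hierarchy system} to locate, near $\alpha$, a bounded-length hierarchy curve whose topological position in $M$ (and hence in $N_\rho$) is controlled by the ordering relations of Section \ref{ordering}. Choose $c_2$ large; then $d_W(\nu^+,\nu^-) > c_2$ forces, via Lemma \ref{large link}, the hierarchy $H(\nu^\pm)$ to contain a tight geodesic $k_W$ supported on $W$ with $\iota_{k_W}$ within $A$ of $\pi_W(\nu^-)$ and $\tau_{k_W}$ within $A$ of $\pi_W(\nu^+)$; and via Theorem \ref{kgccfact} combined with Theorem \ref{bilipschitz model}, each component $\beta$ of $\partial W$ has $\rho$-length below $\epsilon_h$, belongs to $\CH$, and satisfies $h(U(\beta))=\MT(\beta)$. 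Lemma \ref{W cross sections}, applied to any vertices $u,v \in k_W$ with $d_W(u,v) \ge 5$, provides a $W$-product region $Q \subset M$ with horizontal boundaries the extended split-level surfaces $\hhat F_u, \hhat F_v$ whose thick distance is proportional to $d_W(u,v)$; set $R = h(Q)$.

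For the ``moreover'', assume $\alpha$ overlaps some component $\beta$ of $\partial W$ with $\alpha^*$ above $\beta^*$ (the ``below'' case being symmetric). Realize $\alpha$ by a pleated surface $f : X \to N_\rho$; Lemma \ref{above and below} lifts the ordering to the whole image, so $f(X)$ lies above $\beta^*$. Apply Lemma \ref{hierarchy system} to produce a curve system $\Gamma \subset \CH$ on $X$ of bounded $X$-length. Since $\CC(W) \cap \CH$ contains the vertices of $k_W$, Lemma \ref{hierarchy system}(1) forbids $W$ from being a complementary region of $\Gamma$, so some $\gamma \in \Gamma$ must overlap $W$. The central geometric input is that $f(X)$ has bounded thick diameter in $N_\rho$ (Lemma \ref{split BDL} together with the $1$-Lipschitz property of pleated maps), while $R$ is arbitrarily wide in thick distance; one argues that $f(X) \cap R$ is confined to a bounded thick neighborhood of $\partial_1 R = h(\hhat F_v)$.

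Two subcases remain. If $\gamma$ overlaps a component $\beta' \in \partial W$, the confinement yields $f(X)$ above $\beta'^*$, so by Lemma \ref{above and below} $\gamma^*$ lies above $\beta'^*$, giving $U(\beta') \topprec U(\gamma)$ in $M$; Lemma \ref{top order and geodesic}(2) then gives $d_W(\gamma,\nu^+) \le r$. Otherwise $\gamma \subset W$ is disjoint from $\partial W$, and the confinement places $U(\gamma)$ within bounded thick distance of $\hhat F_v$ in $Q$ (transferring across $h$). Choosing $v' \in k_W$ a bounded distance before $\tau_{k_W}$ so that Lemma \ref{W cross sections}(2) puts $\hhat F_{v'}$ strictly below $U(\gamma)$, we obtain $U(v') \topprec U(\gamma)$, and Lemma \ref{top order and geodesic}(1) yields $\pi_k(\gamma) \ge \pi_k(v')-r$, forcing $\pi_k(\gamma)$ within bounded distance of $\tau_{k_W}$ and hence of $\pi_W(\nu^+)$. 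In either case the bounded $X$-lengths of $\alpha$ and $\gamma$ bound their intersection number and hence $d_W(\alpha,\gamma)$ (\cite[Lemma 2.1]{masur-minsky}), yielding a uniform bound on $d_W(\alpha,\nu^+)$.

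The dichotomy follows by contradiction: if $\alpha^*$ were above some $\beta$ and below some $\beta'$ in $\partial W$ overlapping $\alpha$, then the moreover and its symmetric version give uniform bounds on both $d_W(\alpha,\nu^+)$ and $d_W(\alpha,\nu^-)$, contradicting $d_W(\nu^+,\nu^-) > c_2$ once $c_2$ exceeds twice that bound. The principal obstacle is the confinement estimate: one must argue rigorously that the bounded thick diameter of $f(X)$ in $N_\rho$, combined with the fact that $f(X)$ lies above the ``mid-level'' core $\beta^*$, forces $f(X) \cap R$ to sit near $\partial_1 R$, using the orientation-preserving bilipschitz $h$ to transfer the statement to the model side. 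A minor additional subtlety is verifying the overlap hypothesis of Lemma \ref{top order and geodesic}(1) in the second subcase: if $v'$ and $\gamma$ fail to overlap then $d_W(v',\gamma) \le 1$, which already places $\pi_k(\gamma)$ within $1$ of $\pi_k(v') \approx \tau_{k_W}$, so the conclusion follows directly.
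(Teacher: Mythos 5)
Your outline assembles the right ingredients (pleated surface realizing $\alpha$, Lemma \ref{hierarchy system} to produce a hierarchy curve $\gamma$ overlapping $W$, the product regions of Lemma \ref{W cross sections}, the ordering Lemma \ref{top order and geodesic}, and bounded intersection numbers to compare $\alpha$ with $\gamma$), but the step you yourself flag as the principal obstacle --- the ``confinement'' claim --- is not just unproven, it is based on a false picture, and it is exactly where the real work lies. The curve $\beta^*$ is not a ``mid-level core'' of the $W$-product region $R$: it is the core of a Margulis tube running alongside $R$, anchored to the vertical boundary $\partial W\times[0,1]$. Since $W$ does not overlap its own boundary component $\beta$, \emph{every} horizontal cross-section of $R$ (including the bottom one $h(\hhat F_u)$) lies above $\beta^*$ in the sense of the definition in \S\ref{ordering}; consequently the hypothesis ``$f(X)$ lies above $\beta^*$'' carries no information about where $f(X)\cap R$ sits in the vertical direction of $R$, and cannot force $f(X)\cap R$ to lie near $\partial_1R$, nor force $\hhat F_{v'}$ to lie below $U(\gamma)$ in your second subcase. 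The paper's proof uses the same ingredients but in a different logical order: bounded thick diameter of $f(X)$ (Lemma \ref{split BDL}), avoidance of the deep thin part of $\partial W$ (which itself requires an argument --- your appeal to Lemma \ref{above and below} to get ``$f(X)$ above $\beta^*$'' already presupposes $f(X)$ is disjoint from $\beta^*$, which is the paper's $\ep_3$ discussion), and the fact that a homotopy equivalence cannot have image inside $R\cup\MT(\partial W)$, together show that $f(X)$ misses a \emph{middle} cross-section $G_1$ (suitably extended by annuli $A_0$ to the cores $(\partial_0W)^*$) entirely. Only then is the hypothesis ``$\alpha^*$ above $\beta^*$'' used, to decide on which side of $\bar G_1$ the surface lies, giving $G_1\topprec h(U(\gamma))$ (after a bounded-track homotopy of $f(\gamma)$ into $h(U(\gamma))$ avoiding $G_1$, another step you elide) and then Lemma \ref{top order and geodesic}(1).

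A secondary gap: you obtain the above/below dichotomy by contradiction from the ``moreover,'' but the negation of the dichotomy is not ``above one component and below another''; a priori $\alpha^*$ could be neither above nor below some component it overlaps, and your argument says nothing in that case. In the paper the dichotomy is a positive byproduct of the same disjointness statement: once $f(X)$ is disjoint from $\bar G_1$, Lemma \ref{above or below} places it above or below $\bar G_1$, and Lemma \ref{above and below} then transfers this to $\alpha^*=f(\alpha)$ relative to every component of $(\partial W)^*$ it overlaps. Repairing your proposal essentially amounts to replacing the confinement step by this disjoint-from-the-middle-cross-section argument, at which point you recover the paper's proof.
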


\begin{proof}[Proof of \ref{topistop} in non-annular case, given Proposition \ref{curve above or below}:]
If $d_W(\nu^+,\nu^-) > c_2$, then the second claim of Proposition \ref{curve above or
  below} is exactly the first claim of the Theorem. For the second claim of the theorem we note that the first claim implies that  $\alpha^*$ cannot lie below $(\boundary W)^*$. Proposition \ref{curve above or below} then implies that $\alpha^*$ lies above $(\boundary W)^*$.
  
If $d_W(\nu^+,\nu^-) \le
c_2$ it is convenient to assume, without loss of generality, 
that $L\ge L_0$ where $L_0$ is the constant from Theorem
\ref{bounded}.
Theorem \ref{bounded}  then implies, since $l_\rho(\alpha)\le L$, that
$\pi_W(\alpha)$ lies within $D=D(S,L)$ of  $\hull_W(\nu^\pm)$. 
Therefore,
\begin{eqnarray*}\label{consequence of bounded}
d_W(\alpha,\nu^+) &\le& D + d_W(\nu^+,\nu^-)\\
& \le & D +c_2
\end{eqnarray*}
regardless of whether or not $\alpha^*$ lies above any component of $(\partial W)^*$. In particular, we have both claims of the theorem. \end{proof}

\begin{proof}[Proof of \ref{curve above or below}]
We first assume that
$$c_2 > A,$$
where $A=A(S)$ is the constant  given in Lemma
\ref{large link}. Therefore, the assumption that $d_W(\nu^+,\nu^-) > c_2$ implies that $W$ is the
support of  a geodesic $k_W$ in the hierarchy  $H=H(\nu^+,\nu^-)$.

We  may further  assume that 
$$c_2>K+2D,$$
where $K=K(S,\ep_h,L)$ is the constant
from Theorem \ref{kgccfact} and $D=D(S,L)$ is the constant from Theorem \ref{bounded}.
Then Theorem \ref{bounded} implies
\hbox{$\diam_W(\CC(\rho,L))>K$}, 
so that \hbox{$l_\rho(\partial W)<\ep_h$}.

Let $N=N_\rho$ and let $h:M\to N$ be the model map provided by
Theorem \ref{bilipschitz model}. Since  $l_\rho(\partial W)<\ep_h$,
$\partial W\subset\CH$ (where $H=H(\nu^\pm)$) and
$$h(U(\partial W))=\MT(\partial W).$$

If $d_W(\nu^+, \nu^-) = \infty$ then $W$ supports a geometrically infinite end and every component of $(\partial W)^*$ will be parabolic and will either lie in $\nu^+$ or $\nu^-$. The ordering claim of the proposition then follows. For the remainder of the proof we assume that if $W$ supports a geometrically infinite end it is downward-pointing and therefore $k_W$, the hierarchy geodesic supported on $W$, has a terminal vertex $\tau_W$ and $\alpha^*$ lies above every component of $\partial W$ that it overlaps.

Let $f:X\to N$ be a pleated surface, in the homotopy class
of $\rho$, realizing $\alpha$. 
Let $\Gamma$ be the system of hierarchy curves and $B=B(S,1)$ the constant provided by
Lemma \ref{hierarchy system} which bounds the length on $X$ of every curve in $\Gamma$.
Since we know that $W$ is the support of a geodesic
in $H$, and there are no hierarchy curves in the complement of
$\Gamma$,  there exists a hierarchy curve $\gamma\in \Gamma$ which
overlaps $ W$. 
Notice that,  since $l_X(\alpha)\le L$ and $l_X(\gamma)\le B$,
$d_W(\alpha,\gamma)\le a$ for some uniform constant $a$.
Therefore,
\begin{equation}\label{reduction to t_W}
d_W(\alpha,\nu^+) \le d_W(\gamma,\tau_W)+a+A.
\end{equation}
Thus our goal now is to bound $d_W(\gamma,\tau_W)$.

We will define a constant $a_1$ and require that 
$$c_2>2a_1+2A,$$
so that, by Lemma \ref{large link}, $k_W$ has length at least $2a_1$.
The constant $a_1$ will be chosen so that the $W$-product region provided by
Lemma \ref{W cross  sections} will be ``thick'' enough for our purposes.

We begin by giving the argument in a simpler case, where it is easier to understand the 
structure of the argument.

\subsubsection*{Simplified ordering argument}
Consider the case in which the following hold:
\begin{enumerate}
\item[(S1)] $f(X)$ is disjoint from $\MT(\boundary W)$,
\item[(S2)] $f(\gamma) \subset h(U(\gamma))$.
\end{enumerate}

Recall from Lemma \ref{split BDL} that there exists a constant $b=b(S,\ep_1)$,
so that 
$$\thdiam_{X_{thick(\ep_1)}}(X)\le b.$$
Since $f$ is 1-Lipschitz it follows, see (\ref{thick diam bound}), that
$$\thdiam_{N_{thick(\ep_1)}}(f(X))\le b.$$

Let $c_3=c_3(S,\ep_1)$ be the constant given by Lemma \ref{W cross sections}, 
and assume that $a_1$ has been chosen so that
$$a_1\ge\lceil b/c_3 \rceil.$$

Let $v_0<v_1<v_2=\tau_W$ be three vertices in $k_W$ such that \hbox{$d_W(v_i,v_{i+1}) = a_1$}.
Lemma \ref{W cross  sections} gives us images of extended split-level surfaces
$G_i = h(\hhat F_{v_i})$ and two $W$-product regions $R_1$ and $R_2$ where $R_1$
has horizontal boundary $G_0\cup G_1$ and $R_2$ has horizontal boundary $G_1\cup G_2$.
If we let $R=R_1\cup R_2$, then
$G_1 \subset R$, and
\begin{equation}\label{F1 Fj thick far simp}
\thd_{N_{thick(\ep_1)},R}(G_1,G_j) > b
\end{equation}
for $j=0$ and $j=2$.  

We claim now that $f(X)$ is disjoint from $G_1$. 
If not, then, since $f(X)$  is disjoint from $\MT(\partial W)$ and
$$\thdiam_{N_{thick(\ep_1)}}(f(X))\le b,$$
$f(X)$ would have to be contained in $R$. However, this is impossible, since
$f$ is a homotopy equivalence.

Since $f(X)$ is disjoint from $G_1$, it lies above it or below it by
Lemma \ref{above or below}. By Lemma \ref{above and below}, $\alpha^*$ lies either above or below every component of $(\partial W)^*$ that it overlaps. This  proves the ordering statement of the proposition.

Assume that the former holds, that $f(\alpha) = \alpha^*$ lies above the core curve
of $\MT(\beta)$ for every component $\beta\subset \boundary W$ that $\alpha$ overlaps. Since $f(\alpha)$ is disjoint from the tube $\MT(\beta)$, it  also lies above the corresponding
boundary component of $G_1$. Hence $G_1 \topprec f(X)$. 
We therefore can conclude that $G_1 \topprec f(\gamma)$, and finally,
since $f(\gamma)\subset h(U(\gamma))$, that
\begin{equation}\label{G1 below gamma simp}
G_1 \topprec h(U(\gamma))
\end{equation}
provided $G_1$ is disjoint from $h(U(\gamma))$. 

Note that $G_1$ intersects $h(U(\gamma))$ only if $\hhat F_{v_1}$
intersects $U(\gamma)$. Since $\hhat F_{v_1}$ is an extended
split-level surface, this can only happen if $\gamma$ is one of the
base curves of $\hhat F_{v_1}$, and in particular if $\gamma$ is
disjoint from $v_1$ (as curves on $S$).  In this case 
$d_W(\gamma,v_1)\le 1$, so 
$$d_W(\gamma,\tau_W)\le a_1+1.$$

If $\gamma$ does intersect $v_1$, then $G_1$ and $h(U(\gamma))$ are
disjoint and so
$$U(v_1)\topprec U(\gamma).$$
By Lemma \ref{top order and geodesic},
$\pi_{k_W}(v_1) \le \pi_{k_W}(\gamma)+r$, which implies that
$$d_W(\gamma,\tau_W)\le a_1+r.$$

We have uniformly bounded $d_W(\gamma,\tau_W)$ in all cases. In
combination with (\ref{reduction to t_W}), this completes the proof of the second claim
in our simplified case.

\subsubsection*{General ordering argument}
We now adapt the above argument to hold in the general setting where (S1) and (S2) may not hold.
It will be convenient to divide $\partial W$ into  the collection  $\partial_1W$ of components which do
not overlap $\alpha$, and the collection $\partial_0W$ of components which do overlap $\alpha$. Moreover,
we will assume that the pleated surface $f$ realizes $\alpha\cup\partial_1W$.  
As in the simplified case,
we will construct extended split level surfaces $G_0$, $G_1$ and $G_2$ such that $G_0$ and $G_2$ 
are the horizontal boundary components of a $W$-product region $R$ which
contains $G_1$. We will choose the spacing constant  to guarantee that
\begin{enumerate}
\item
$f(X)$ is disjoint from $G_1$,
\item
there exists a collection of annuli $A_0$ joining (suitable components
of) $\partial G_1$ to $(\partial_0W)^*$ such that $f(X)$ misses $A_0$, and
\item
there exists a homotopy from $f(\gamma)$ into $h(U(\gamma))$ which is disjoint from $G_1$.
\end{enumerate}
By (1) and (2), $f(X)$ is disjoint from $\bar G_1=G_1\cap A_0$ and therefore, by Lemma \ref{above or below}, $f(X)$ lies above or below $\bar G_1$ and, as in the simplified case, Lemma \ref{above and below} implies that $\alpha^*$ lies either above or below every component of $(\partial W)^*$ that it overlaps. This proves the ordering statement. 
Since $f(X)$ lies above $\bar G_1$ and hence above $G_1$, then (3) implies
that $h(U(\gamma))$ lies above $G_1$ if it is disjoint from $G_1$. Therefore, once we have established (1)--(3), we can 
use Lemma \ref{top order and geodesic} to complete the proof just as we did in the simplified case.

We remark that with a little more work, we could homotope $f$ to a uniformly Lipschitz map $g$ such that
$g(\gamma)\subset h(U(\gamma))$ and  $g(X)$ is disjoint from $\MT(\partial_0 W)$. Our proof would then
resemble the simplified case even more closely.

We now establish (1)--(3). Notice that our assumptions imply that 
$$f(\collar(\partial_1W))\subset\MT(\partial_1 W).$$
Since $l_X(\alpha)\le L$ and $\alpha$ overlaps every component of $\partial_0W$, there
exists $L_2=L_2(S,L)$ so that passing through every point in $X-\collar(\partial_1(W))$ there is
a curve of length at most $L_2$ which is not homotopic into
$\collar(\partial W)$.
Moreover, there
exists $\ep_3=\ep_3(\max\{L_2,B,K_h\})$ so that any curve of length at most
$\max\{L_2,B,K_h\}$ which intersects $\MT_{\ep_3}(\partial W)$ is
contained in $\MT(\partial W)$.
It follows that $f(X-\collar(\partial_1W))$ is
disjoint from $\MT_{\ep_3}(\partial W)$.

Applying Theorem \ref{kgccfact}, 
we may assume that $d_W(\nu^+,\nu^-)$ is large enough that 
$l_\rho(\partial W)\le \ep_3/4$. Therefore, there exists a constant $K_3$ so that the radial projection
$$r_W:\MT(\partial W)-\MT_{\ep_3}(\partial W)\to \partial \MT(\partial W)$$
is $K_3$-Lipschitz. We may extend $r_W$ to a $K_3$-Lipschitz map defined on  \hbox{$N-\MT_{\ep_3}(\partial W)$}
by setting it to be the identity off of $\MT(\partial W)$.

Let $Z$ be a component of $X-\collar(\partial_1 W)$. Lemma \ref{split BDL} implies that there exists $b=b(S,\ep_3)$ so
that 
$$\thdiam_{X_{thick(\ep_3)}}(Z)\le b.$$
Since $f(\gamma)$ has length at most $B$ and the core curve of $h(U(\gamma))$ has length at most $K_h$,
one may homotope $f(\gamma)$ into
$h(U(\gamma))$  through a family of curves of length at most $\max\{B,K_h\}$. 
It follows that this homotopy may be completed in the complement of $\MT_{\ep_3}(\partial W)$.
Therefore, there exists a constant $b_1=b_1(B,\ep_3)$ so that there is a homotopy from $f(\gamma)$
into $h(U(\gamma))$ all of whose tracks have length at most $b_1$ and are disjoint from $\MT_{\ep_3}(\partial W)$.
(If $\ell_\rho(\gamma)<\ep_h$, then $h(U(\gamma))=\MT(\gamma)$ and this follows from Lemma 2.6 in
\cite{ELC2}. If not, then the ruled homotopies from $f(\gamma)$ to $\gamma^*$ and from the core curve
of $h(U(\gamma))$  to $\gamma^*$ each have uniformly bounded length tracks  and avoid $\MT_{\ep_3}(\partial W)$, so
they may be concatenated
to produce the desired homotopy.)

Let $c_3=c_3(S,\ep_3)$ be the constant given by Lemma \ref{W cross sections}, and 
assume that
$$a_1 \ge \lceil K_3(b+b_1)/c_3 \rceil.$$
As in the simplified case, this implies that $k_W$ has length at least $2a_1$.

Let $v_0<v_1<v_2=\tau_W$ be three vertices in $k_W$ such that \hbox{$d_W(v_i,v_{i+1}) = a_1$.}
Lemma \ref{W cross  sections} provides extended split level surfaces
$G_i = h(\hhat F_{v_i})$ and a $W$-product region $R$ with
horizontal boundary $G_0\cup G_2$ such that $G_1\subset R$
and
\begin{equation}\label{F1 Fj thick far simpG}
\thd_{N_{thick(\ep_3)},R}(G_1,G_j) >K_3 (b+b_1)
\end{equation}
for $j=0$ and $j=2$.

We first claim that  if $Z$ is a component of $X-\collar(\partial_1W)$, then $f(Z)$ is disjoint from $G_1$. 
Since $f(\collar(\partial_1W))\subset \MT(\partial_1(W))$, 
this implies that $f(X)$ is disjoint from $G_1$, which establishes (1). 
Notice that $Z$ is not homotopic into $W$, so $f(Z)$ cannot be contained in
$R\cup\MT(\partial W)$. So, if $f(Z)$ intersects $G_1$, 
there is a path $\nu$ in $Z$ so that $\nu\cap Z_{thick(\ep_3)}$ has length at most $b$ and
$f(\nu)$ joins $G_1$ to a point outside of $R\cup\MT(\partial W)$. 
Since $f( Z_{thin(\ep_3)})$ is disjoint from $\MT_{\ep_3}(\partial W)$ and $f(Z_{thin(\ep_3)})\subset N_{thin(\ep_3)}$,
$\bar\nu=r_W(f(\nu))$ is a path contained in $N-\MT(\partial W)$ joining
a point on $G_1$ to a point outside of $R$ such that $\bar\nu\cap N_{thin(\ep_3)}$ has length at most $K_3b$.
However this would contradict
(\ref{F1 Fj thick far simpG}).

A very similar argument establishes (2). 
Let $A_0$ be the collection of radial annuli in $\MT(\partial_0W)$ joining components of $(\partial_0W)^*$ to
the appropriate components of $\partial G_1$.
If $Z$ is a component of $X-\collar(\partial_1W)$ and
$f(Z)$ intersects $A_0$, then there is a path $\nu$ in $Z$ so that
$\nu\cap Z_{thick(\ep_3)}$ has length at most $b$ and $f(\nu)$ joins $A_0$ to a point outside of $R\cup\MT(\partial_0W)$. 
Then $\bar\nu=r_W(f(\nu))$ would be a path contained in $N-\MT(\partial W)$ joining
a point on $G_1$ to a point outside of $R$ such that $\bar\nu\cap N_{thin(\ep_3)}$ has length at most $K_3b$.
Again, this contradicts (\ref{F1 Fj thick far simpG}). Therefore, $f(X)$ is disjoint from $A_0$ and we have established (2).

To establish (3), consider the homotopy with track lengths at most $b_1$ joining $f(\gamma)$ to a curve
in $h(U(\gamma))$ (in the complement of $\MT_{\ep_3}(\partial W)$). If  the homotopy
intersects $G_1$ then there is a path $\eta$ of length $b_1$ joining $G_1$ to
$f(X)$. Then $r_W(\eta)$  is a path of length $Kb_1$ in $N-\MT(\partial W)$ joining $G_1$ to a point in
$z\in f(X)$. One may then apply the construction in (1) to find a path $\bar \nu$ in $N-\MT(\partial W)$ joining
$r_W(z)$ to a point in $N-R$
such that  $\bar\nu\cap N_{thin(\ep_3)}$ has length at most $K_3b$. Concatenating $\eta$ and $\bar \nu$ would
again contradict (\ref{F1 Fj thick far simpG}). This completes the proof of the first claim in the non-annular case.
\end{proof}

\subsection{Proof in the annular case}
\label{overlap annular}

We now proceed to give a proof of Theorem \ref{topistop} in the case when $W$ is an annulus.
Let $\gamma$ be the core of $W$. 
Assume that $\alpha\in C(\rho,L)$ and that $\alpha^*$ lies above $\gamma^*$.

We first observe that we may assume that there is a bounded length curve $\beta$ such that
$d_W(\beta,\nu^+)\le1$ whose geodesic representative lies above $\gamma^*$.

\begin{lemma}{nearby curve}
Let  $S$ be a compact surface and $L\ge L_0$. There exists $K_4=K_4(S,L)$  and $L_1=L_1(S,L)$
such that if
$\rho\in AH(S)$ has end invariants $\nu^\pm$,
$\alpha\in \CC(\rho,L)$, $\gamma\in \CC(S)$, $\alpha^*$ lies above $\gamma^*$ and
$$d_\gamma(\alpha,\nu^+)>K_3,$$
then there exists $\beta\in\CC(\rho,L_1)$ such that $d_\gamma(\beta,\nu^+)\le 1$
and $\beta^*$ lies above $\gamma^*$.
\end{lemma}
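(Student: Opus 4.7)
The plan is to produce $\beta$ directly from the upward ending data of $\nu^+$ responsible for the nontrivial projection to $\CC(\gamma)$, using the product structure of $N_\rho^0 \cong S \times \R$ to place $\beta^*$ above $\gamma^*$. First I would fix $K_4$ large enough (using the definition of $\mu^+$ together with Theorems \ref{bounded} and \ref{kgccfact}) so that the hypothesis $d_\gamma(\alpha,\nu^+) > K_4$ forces some component $\sigma$ of the generalized marking $\mu^+$ to essentially overlap $\gamma$, with $\pi_\gamma(\sigma)$ lying within bounded distance of $\pi_\gamma(\nu^+)$. Let $W^+$ denote the essential subsurface of $S$ supporting $\sigma$; by construction $W^+$ faces an upward-pointing end of $N_\rho$.

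I would then treat two cases. If $\sigma$ is a closed curve---either a component of $P^+$ or a curve in a bounded length marking on a geometrically finite subsurface of $\nu^+$---set $\beta = \sigma$. Then $\ell_\rho(\beta) \le L_B$, where $L_B$ is the Bers constant, and $d_\gamma(\beta,\nu^+) \le 1$ is immediate. If $\sigma = \lambda \in \EL(W^+)$ is an ending lamination on a simply degenerate upward end, I would invoke Bonahon \cite{bonahon} to obtain pleated surfaces $f_n \colon X_n \to N_\rho$ with base surface $W^+$ exiting the end, and take $\beta_n$ to be a shortest curve on $X_n$ (so $\ell_\rho(\beta_n) \le L_B$). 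Since $\beta_n \to \lambda$ in the Hausdorff topology on laminations and $\gamma$ crosses $\lambda$ essentially, the annular projections $\pi_\gamma(\beta_n)$ lie within bounded distance of $\pi_\gamma(\lambda)$ for $n$ large, so $d_\gamma(\beta_n,\nu^+) \le 1$ for $n$ sufficiently large; set $\beta = \beta_n$ for such an $n$.

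The main obstacle is showing that $\beta^*$ lies above $\gamma^*$. Identifying the upward end $U_{W^+}$ with $W^+ \times [1,\infty)$ in the product structure coming from the relative compact core, in the first case $\beta = \sigma$ admits a representative at any level $W^+ \times \{t\}$, while in the second case the pleated surface $f_n(X_n)$ (and hence a representative of $\beta_n$) can be arranged to lie in $W^+ \times [T_n,\infty)$ with $T_n \to \infty$. Pushing such a representative to $+\infty$ within the product structure yields a proper isotopy; the delicate step is arranging this push-up to remain disjoint from $\gamma^*$, which is where the hypothesis that $\alpha^*$ lies above $\gamma^*$ enters, constraining the position of $\gamma^*$ relative to $U_{W^+}$. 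Once this is established, the push-up witnesses that $\beta^*$ lies above $\gamma^*$ by definition, with Lemma \ref{above and below} (applied in the appropriate homotopy class) used to transfer the conclusion between the level and geodesic representatives. The parabolic conventions described after the statement of Theorem \ref{topistop} handle the cases in which $\rho(\sigma)$ or $\rho(\gamma)$ is parabolic: in particular, when $\sigma$ is itself a parabolic component of $\nu^+$, the condition $\beta^* \text{ above } \gamma^*$ holds automatically from the convention.
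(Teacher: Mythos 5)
Your handling of the easy cases (a parabolic curve of $P^+$ crossing $\gamma$, or an ending lamination of a degenerate upward end, approximated by bounded-length curves exiting the end) matches the paper. The genuine gap is in the remaining case, where $\gamma$ lies in a subsurface $Y$ facing an upward \emph{geometrically finite} end, and it occurs at two points. First, your claim that the marking curve $\sigma$ crossing $\gamma$ satisfies $\ell_\rho(\sigma)\le L_B$ is unjustified: only the base curves of the minimal marking on $Y_\infty$ have length bounded by the Bers constant. If $\gamma$ is very short in $Y_\infty$ then $\gamma$ is itself a base curve, the only marking components crossing it are its transversals, and their length (hence any a priori bound on $\ell_\rho$) blows up as $\ell_{Y_\infty}(\gamma)\to 0$. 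The paper rules this out using exactly the hypotheses you leave idle: if $\gamma$ were shorter than $\ep_4(L)$ on the top convex hull boundary $Y_h$, there would be an annulus inside $\MT_{\ep_4}(\gamma)$ joining $\gamma^*$ to its $Y_h$-representative, and $\alpha^*$ (which lies in the convex core, above $\gamma^*$, and has length at most $L$) would be forced to cross it and hence be homotopic to a power of $\gamma$, a contradiction. The resulting lower bound $\ell_{Y_h}(\gamma)\ge\ep_4$, pushed through the nearest point retraction, is what guarantees a marking curve $\beta$ of uniformly bounded length $L_1$ overlapping $\gamma$.

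Second, the step you defer (``the delicate step is arranging this push-up to remain disjoint from $\gamma^*$ \dots Once this is established'') is the real content of the lemma in this case, and your sketch points at the wrong mechanism for it. A representative of $\beta$ far out in the end, or its image $r(\beta)$ on $Y_h$, lies above $\gamma^*$ essentially for free because $\gamma^*$ is in the convex core; the difficulty is transferring ``above'' to the geodesic representative $\beta^*$, i.e.\ showing that the straightening homotopy misses $\gamma^*$. Lemma \ref{above and below} cannot do this transfer: it compares a curve on a surface with the whole surface, not two representatives of the same curve. The paper obtains the transfer from the hypothesis $d_\gamma(\alpha,\nu^+)>K_4$: since $\alpha$ and bounded-length curves near $\pi_\gamma(\nu^+)$ give $\operatorname{diam}\pi_\gamma(\CC(\rho,L))$ large, Theorem \ref{kgccfact} forces $\ell_\rho(\gamma)<\ep_5$, and then the bounded-track homotopy from $r(\beta)$ to $\beta^*$ cannot enter $\MT_{\ep_5}(\gamma)$, in particular misses $\gamma^*$. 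In your outline the large-projection hypothesis is only invoked at the start to ensure some component of $\mu^+$ overlaps $\gamma$ (which is automatic if $\pi_\gamma(\nu^+)$ is to be defined, and note Theorem \ref{bounded} does not even apply to annuli), so neither of the two hypotheses that drive the hard case is actually used where it is needed; as written the argument does not close.
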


\begin{proof}
First notice that
if $\gamma$ overlaps a simple closed curve component of $\nu^+$ then we can choose
$\beta$ to be that curve. (Recall that, by convention, we say that $\beta^*$ lies above $\gamma^*$ if
$\beta$ is an upward-pointing parabolic).

If $\gamma$ overlaps a lamination component $\lambda$ of $\nu^+$ with support $Y$,
then there exists a sequence $\{\beta_n\}\subset\CC(\rho,L_h)\cap \CC(Y)$ so that
$\beta_n\to\lambda$ in $\CC(Y)$. For all sufficiently large $n$,
$d_W(\beta_n,\nu^+)\le 1$ and $\beta_n^*$ lies above $\gamma^*$, so we may choose
$\beta=\beta_n$ for some specific large enough $n$.

In the remaining case, $\gamma$ is contained in $Y$ where $Y$ is the support of an upward
pointing geometrically finite end.  Let $Y_h$ be the component of the
(upper) convex hull boundary 
associated to $Y$, and let $Y_\infty$ be the
corresponding component of the boundary at infinity, with its
Poincar\'e metric. Recall that there is a 2-Lipschitz map
$r:Y_\infty\to Y_h$ in the correct homotopy class, called the nearest
point retraction \cite[Theorem 3.1]{EMM}.

There exists $\epsilon_4=\epsilon_4(L)$ such that any curve of length at most
$L$ which intersects 
$\MT_{\ep_4}(\gamma)$ is homotopic to a power of $\gamma$. If
$l_{Y_h}(\gamma)<\ep_4$,
then there is an annulus $A$ in $\MT_{\ep_4}(\gamma)$ joining
$\gamma^*$ to its representative on $Y_h$. But then $\alpha^*$, which
lies in $C(N)$ and  above $\gamma^*$, would be forced to intersect
$A$, contradicting the assumption that $l(\alpha^*)<L$. 
Therefore, we may assume that $l_{Y_h}(\gamma)\ge \ep_4$, which
implies (via the map $r$) that $l_{Y_\infty}(\gamma) \ge \ep_4/2$.
It follows that the minimal $Y_\infty$-length marking $\nu^+|_Y$ contains a curve $\beta$ of length at most
$L_1=L_1(S,\ep_4)$ which overlaps $\gamma$.

It is clear that $r(\beta)$ lies above $\gamma^*$, unless it intersects $\gamma^*$, since $\gamma^*\subset C(N)$.
There exists $\ep_5=\ep_5(L_1)$ so that there is a homotopy from $r(\beta)$ to $\beta^*$ which is
disjoint from $\MT_{\ep_5}(\delta)$ for any curve $\delta\in\CC(S)-\{\beta\}$. Therefore, if
$l_\rho(\gamma)<\ep_5$, then $\beta^*$ lies above $\gamma^*$.
Theorem \ref{kgccfact} gives 
$K_4=K(S,L,\ep_5)$ so that if
$\diam_\gamma(\CC(\rho,L))\ge K_4$, then $l_\rho(\gamma)<\ep_5$.
But, since
$$\diam_\gamma((C(\rho,L))\ge d_\gamma(\alpha,\nu^+)\ge K_4$$
by assumption, we
may conclude that $\beta^*$ lies above $\gamma^*$ in this case as well.
\end{proof}

The annular case of Theorem \ref{topistop} then follows quickly from the following result:

\begin{proposition}{annulus case}
Given a compact surface $S$ and $L,D>0$ ,there exists $F=F(D,L)$ such that if $\rho \in AH(S)$ and
$\alpha$, $\beta$ and $\gamma$ are curves in  $\CC(S)$  such that
\begin{enumerate}
\item $\alpha$ and $\beta$ overlap $\gamma$,
\item $\ell_\rho(\alpha)\le L$ and $\ell_\rho(\beta)\le L$,
\item $\alpha^*$ and $\beta^*$ lie above $\gamma$, and
\item if $Y$ is non-annular essential subsurface with $\gamma\in [\partial Y]$,
then 
$$d_Y(\alpha,\beta) \le D.$$
\end{enumerate}
then $$d_\gamma(\alpha,\beta) \le F.$$
\end{proposition}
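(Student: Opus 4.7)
The plan is proof by contradiction: assume $d_\gamma(\alpha,\beta)$ exceeds a function $F(D,L)$ to be determined and derive a contradiction, following the sketch given in the section introduction. First I would complete $\alpha$ and $\beta$ to generalized markings $\mu_\alpha,\mu_\beta$ by invoking Lemma \ref{hierarchy system} on pleated surfaces realizing them; this ensures these auxiliary markings consist of $\rho$-bounded-length curves with controlled subsurface projections. I would then form the hierarchy $H'=H(\mu_\alpha,\mu_\beta)$ and its associated model manifold $M'=M(\mu_\alpha,\mu_\beta)$. Hypothesis (4) together with Lemma \ref{large link} guarantees that every geodesic of $H'$ whose domain has $\gamma$ in its boundary has length uniformly $O(D)$, with the sole exception of the annular geodesic on $\gamma$ itself, whose length is comparable to $d_\gamma(\alpha,\beta)$. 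Consequently the model tube $U(\gamma)\subset M'$ is ``deep'' in a quantitative way: after arranging $\ell_\rho(\gamma)$ to be small via Theorem \ref{kgccfact}, its boundary torus carries a flat metric of bounded injectivity radius in which some essential loop (tracking the twist coordinate that distinguishes $\alpha$ from $\beta$) has length growing linearly in $d_\gamma(\alpha,\beta)$.

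The main input, promised as Lemma \ref{special model}, is a uniformly Lipschitz map $\Phi:M'\to N_\rho$ in the correct homotopy class, sending $\partial U(\gamma)$ into $\partial\MT(\gamma)$; I expect its construction to follow the template of \cite{ELC1}, adapted to the ``two-curve'' input data $(\alpha,\beta)$. Given $\Phi$, I would show that the induced map $\Phi|_{\partial U(\gamma)}:\partial U(\gamma)\to\partial\MT(\gamma)$ has degree zero by invoking Lemma \ref{both above}. The two horizontal boundaries of $M'$ correspond under $\Phi$ to maps of $S$ into $N_\rho$ whose visible curves are $\alpha$ and $\beta$ respectively; since both $\alpha^*$ and $\beta^*$ lie above $\gamma^*$, Lemma \ref{above and below} applied to each of these maps shows that each is upward-pushable in the complement of $\MT(\gamma)$, so the two are homotopic to each other in $N_\rho\setminus\MT(\gamma)$. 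This verifies the hypothesis of Lemma \ref{both above} applied to a regular neighborhood of $\partial U(\gamma)$ in $M'$ (viewed as an $(S\times[0,1])\setminus\NN(\gamma)$-shaped piece), yielding the degree-zero conclusion.

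The final step, and the main technical hurdle, is Lemma \ref{lipschitz_torus}, which will assert that a uniformly Lipschitz map of degree zero between flat tori of uniformly bounded geometry cannot have kernel generated by an arbitrarily long essential simple loop. Since the long loop identified above on $\partial U(\gamma)$ must lie in the kernel of $\Phi_*$ (its image on $\partial\MT(\gamma)$ bounds a disk in the Margulis tube), and both tori have uniformly bounded geometry, this forces $d_\gamma(\alpha,\beta)$ to be bounded, producing the desired $F=F(D,L)$. The main obstacle in executing this plan is the construction of $\Phi$ in Lemma \ref{special model}: although $M'$ is not a priori the model of any Kleinian surface group, one must adapt the machinery of \cite{ELC1} to produce a uniformly Lipschitz comparison to $N_\rho$ that correctly identifies $U(\gamma)$ with $\MT(\gamma)$ using only the bounded-length input data $(\alpha,\beta)$; ensuring Lipschitz control across the boundary of this distinguished tube will be the delicate point.
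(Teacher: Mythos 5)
Your proposal follows essentially the same route as the paper: build a special model and uniformly Lipschitz map to $N_\rho$ from the two bounded-length curves by adapting \cite{ELC1} (this is exactly the paper's Lemma \ref{special model}, including the use of hypothesis (4) to bound the geometry of $\partial U(\gamma)$ and of $d_\gamma(\alpha,\beta)$ to bound the meridian length from below), obtain degree zero on the tube boundary from Lemma \ref{both above} using that both horizontal boundary maps lie above $\gamma^*$, and conclude with the Lipschitz torus lemma. One justification needs repair: your parenthetical claim that the long loop lies in the kernel of the torus map \emph{because} its image bounds a disk in $\MT(\gamma)$ is not valid as stated. Bounding a disk in the solid torus only says the image is a multiple of the meridian of $\partial\MT(\gamma)$; in the wrapped examples this multiple is nonzero, so the loop is not in the kernel of the map between the two boundary tori, even though it dies in $\pi_1(N_\rho)$. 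The correct deduction, which your ingredients already supply and which the paper uses, is: degree zero forces the kernel of $f_*:\pi_1(\partial U(\gamma))\to\pi_1(\partial\MT(\gamma))$ to be nontrivial, while the fact that $f_{\alpha,\beta}$ is a homotopy equivalence forces this kernel to lie inside the kernel of $\pi_1(\partial U(\gamma))\to\pi_1(S\times[0,1])$, which is the cyclic group generated by the meridian; hence the kernel is generated by a power of the meridian, whose geodesic length is at least $d_\gamma(\alpha,\beta)/K_2$, and Lemma \ref{lipschitz_torus} then gives the bound.
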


\begin{proof}[Proof of the annular case of Theorem \ref{topistop}]
We may assume that \hbox{$d_W(\alpha,\nu^+)>K_4$} where $K_4=K_3(S,L)$ is the constant from Lemma
\ref{nearby curve}. Let $\beta$ be the curve provided by Lemma \ref{nearby curve}. Recall that
$l_\rho(\beta)<L_1=L_1(S,L)$, $d_\gamma(\beta,\nu^+)\le 1$, and $\beta^*$ lies above $\gamma^*$. 

If $D=D(S,L)$ is the constant from the non-annular case of Theorem \ref{topistop}, then
if $Y$ is any non-annular surface with $\gamma\in [\partial Y]$, then 
$$d_Y(\alpha,\beta)\le 2D.$$
We may then apply Proposition \ref{annulus case} to conclude that
$$d_\gamma(\alpha,\beta) \le F=F(2D,\max\{L,L_1\}).$$
It follows that
$$d_\gamma(\alpha,\nu^+)\le F+1$$
and the proof is complete.
\end{proof}

\medskip

We now turn to the proof of Proposition \ref{annulus case}.
The first lemma we need is a mild variation of
the model manifold theorem from \cite{ELC1}, in
which the end invariants have been replaced by the bounded-length
curves $\alpha$ and $\beta$. Our statement of this lemma
forgets most of the structure of the model and the Lipschitz
properties of the map, remembering only the properties of the map concerning
the tube $U(\gamma)$ and the images of  $\alpha$ and $\beta$. 
We will have to take a bit of care because the statements in 
\cite{ELC1} are given in the setting where the initial and terminal
markings of the hierarchy are actually the end invariants of
$\rho$, although the proofs go through verbatim in this setting. 
In Bowditch \cite{bowditch}, simpler proofs of the a priori length
bounds  of \cite{ELC1} are given, in the more general setting, and
this will simplify the discussion. We will point out the details as we go.

\begin{lemma}{special model}
Given a compact surface $S$ and $L, D>0$ there exist $K_1=K_1(S,L,D)$ and $K_2=K_2(S,L,B)$ such that if
$\rho\in AH(S)$,
$\alpha$ and $\beta$ are intersecting curves in $\CC(S)$
and  \hbox{$\gamma\in\CC(S)$} intersects both $\alpha$ and $\beta$,
\begin{itemize}
\item $\ell_\rho(\alpha),\ell_\rho(\beta) \le L$,

\item $d_\gamma(\alpha,\beta) > K_1$, and

\item $d_Y(\alpha,\beta) < D$ if $\gamma \in [\boundary Y]$ and
$Y$ is non-annular,
\end{itemize}
then there exists a map of pairs
$$f_{\alpha,\beta}: (S\times[0,1], \boundary S\times[0,1])  \to
(N^0_\rho,\boundary N^0_\rho),$$
in the homotopy class determined by $\rho$, such that
\begin{enumerate}
\item The preimage $f_{\alpha,\beta}^{-1}(\MT(\gamma))$ is a
  regular neighborhood $U(\gamma)$ of $\gamma\times\{1/2\}$.
\item The restriction of $f_{\alpha,\beta}$ to $\boundary U(\gamma)$,
  as a map to $\boundary \MT_\rho(\gamma)$, 
  is $K_2$-Lipschitz with respect to a Euclidean metric $\sigma$ on
  $\boundary U(\gamma)$ which has area at most $K_2$, and meridian
  length bounded below by $d_\gamma(\alpha,\beta)/K_2$.
\item If $\alpha^*$ lies above (respectively below) $\gamma^*$,
then $f|_{S\times\{0\}}$ lies above (respectively below) $\gamma^*$.
\item If $\beta^*$ lies above (respectively below) $\gamma^*$,
then $f|_{S\times\{1\}}$ lies above (respectively below) $\gamma^*$.
\end{enumerate}

\end{lemma}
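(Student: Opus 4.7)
The plan is to imitate the construction of the model manifold from \cite{ELC1}, but using $\alpha$ and $\beta$ in place of the end invariants $\nu^\pm$. First I would extend $\alpha$ and $\beta$ to complete (generalized) markings $\mu_\alpha$ and $\mu_\beta$ of $S$ whose base curves all have $\rho$-length bounded by some $L' = L'(L)$: fill out $\alpha$ and $\beta$ to pants decompositions using a bounded number of curves, using the Bers constant applied to pleated surfaces realizing $\alpha$ and $\beta$, and pick transversals of bounded length similarly. I would then form the hierarchy $H_{\alpha,\beta} = H(\mu_\alpha,\mu_\beta)$ and its associated model manifold $M_{\alpha,\beta} = M(\mu_\alpha,\mu_\beta)$ as in \cite{ELC1}.

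The second step is to produce a Lipschitz map $f_{\alpha,\beta}: M_{\alpha,\beta} \to N^0_\rho$ in the correct homotopy class, with uniform Lipschitz constant, sending each model tube $U(v)$ into $\MT(v)$ when $v$ is short and sending it into a uniform neighborhood of the geodesic representative $v^*$ otherwise. The one nontrivial ingredient is the a-priori length bound: every simple closed curve appearing in $H_{\alpha,\beta}$ has $\rho$-length bounded uniformly in $S$ and $L$. Once the markings $\mu_\alpha,\mu_\beta$ have bounded length, this is exactly the setting treated in Bowditch~\cite{bowditch}, whose arguments do not use that $\mu_\alpha,\mu_\beta$ are actual end invariants. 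Granted the length bounds, the construction of the Lipschitz map follows the block-by-block scheme of \cite[\S8]{ELC1} verbatim. Precomposing with a map $S\times[0,1] \to M_{\alpha,\beta}$ that collapses the (topologically trivial) upper and lower portions onto $\partial M_{\alpha,\beta}$ gives the desired $f_{\alpha,\beta}$.

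Now for the tube $U(\gamma)$. The hypothesis $d_Y(\alpha,\beta) \le D$ for every non-annular $Y$ containing $\gamma$ in its boundary, combined with the large link lemma \ref{large link}, implies (once $K_1$ is chosen larger than all the relevant thresholds) that the only hierarchy geodesic in $H_{\alpha,\beta}$ with domain an annulus whose core is $\gamma$ has length comparable to $d_\gamma(\alpha,\beta)$, while no non-annular geodesic has $\gamma$ on its boundary of long length. By the explicit description of model tubes in \cite[\S9]{ELC1}, $U(\gamma)$ is then an explicit solid torus in the isotopy class of a neighborhood of $\gamma\times\{1/2\}$, carrying a Euclidean metric $\sigma$ on its boundary torus whose area is uniformly bounded, and whose meridian length is comparable to the length of the annular geodesic, hence bounded below by $d_\gamma(\alpha,\beta)/K_2$ for some uniform $K_2$. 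The restriction $f_{\alpha,\beta}\colon \partial U(\gamma) \to \partial \MT(\gamma)$ is then a $K_2$-Lipschitz map of tori, giving (1) and (2). The main obstacle here is verifying that the tube in $M_{\alpha,\beta}$ really does correspond to $\MT(\gamma)$ under $f_{\alpha,\beta}$ rather than just being mapped near $\gamma^*$; this is where one uses that $d_\gamma(\alpha,\beta)$ is large, via Theorem \ref{kgccfact} applied to $\CC(\rho,L')$, to guarantee $\ell_\rho(\gamma)<\ep_h$ and hence that the tubes-to-tubes condition of Theorem \ref{bilipschitz model}(4) applies.

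For (3) and (4), observe that $f_{\alpha,\beta}|_{S\times\{0\}}$ is homotopic through the model (staying outside $U(\gamma)$) to a pleated or Lipschitz realization of $\mu_\alpha$, and in particular of $\alpha$. If $\alpha^*$ lies above $\gamma^*$, then the realization of $\alpha$ lies above $\gamma^*$ in $N_\rho$, and the homotopy remains disjoint from $\MT(\gamma)$; thus $f_{\alpha,\beta}|_{S\times\{0\}}$ lies above $\gamma^*$ as well. The symmetric statement for $\beta$ gives (4). The one subtlety, parallel to what happens in \S\ref{overlap nonannular}, is controlling the homotopy so it stays out of $\MT(\gamma)$; this uses that the markings have bounded length and that any bounded-length curve that hits a sufficiently deep sub-tube of $\MT(\gamma)$ must be a power of $\gamma$, so the homotopy can be pushed out radially.
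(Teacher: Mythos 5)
Your overall route is the same as the paper's: extend $\alpha,\beta$ to bounded-length curve systems, build the hierarchy between them, use Bowditch \cite{bowditch} (or \cite[\S7]{ELC1}) for a-priori length bounds, run the model and Lipschitz-map construction of \cite{ELC1}, read off the geometry of $\boundary U(\gamma)$ from the \S 9 tube-coefficient estimates to get (2), and use Theorem \ref{kgccfact} to make $\gamma$ short. But there are genuine gaps. First, you cannot in general ``pick transversals of bounded length'': if some base curve of $P_\alpha$ is very short on the pleated surface realizing $\alpha$, the collar lemma forces every transversal to be long, and bounded geometry of the full markings is exactly what you lean on for the a-priori bounds and for the map near $S\times\{0,1\}$. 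The paper avoids transversals entirely and works with the pants decompositions $P_\alpha,P_\beta$ alone; the price it pays, which your proposal never confronts, is that the \cite{ELC1} construction does not directly apply when $P_\alpha$ and $P_\beta$ share curves without transversals, and a separate modification is needed (tubes of the form $(\text{annulus})\times\R$ for the shared curves, trimming to get a model homeomorphic to $R\times[0,1]$ with $R=S\setminus\collar(\Delta)$, then re-extending by ruled annuli and using a further smallness condition on $\ell_\rho(\gamma)$ so these annuli miss $\MT(\gamma)$ and the preimage condition (1) survives). Relatedly, justifying $f_{\alpha,\beta}^{-1}(\MT(\gamma))=U(\gamma)$ by the tubes-to-tubes statement of Theorem \ref{bilipschitz model}(4) is a category error: that statement concerns the model built from the actual end invariants $\nu^\pm$, not this ad hoc model; the correct source is the corresponding conclusion of the Lipschitz model theorem of \cite[\S10]{ELC1} applied to $H(P_\alpha,P_\beta)$, which is what the paper quotes.

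Second, your argument for (3) and (4) does not work as stated. You homotope the entire surface map $f_{\alpha,\beta}|_{S\times\{0\}}$ to a pleated realization of $\mu_\alpha$ and assert that this realization lies above $\gamma^*$ and that the homotopy stays out of $\MT(\gamma)$. Neither is automatic: a pleated surface realizing $\mu_\alpha$ may enter $\MT(\gamma)$ and even meet $\gamma^*$, so ``lies above'' need not make sense for it, and your radial push-out remedy controls homotopies of a single bounded-length curve, not of a whole surface. The paper argues at the level of the curve and then upgrades: $f(\alpha\times\{0\})$ has uniformly bounded length, so the ruled homotopy from it to $\alpha^*$ has bounded-length tracks and misses $\gamma^*$ once $K_1$ is large enough that $\ell_\rho(\gamma)$ is sufficiently small (Theorem \ref{kgccfact}); hence $f(\alpha\times\{0\})$ lies above $\gamma^*$ whenever $\alpha^*$ does, and then Lemma \ref{above and below} --- the tool you never invoke, and which is precisely what is needed --- converts this curve-level statement into the statement that $f_{\alpha,\beta}|_{S\times\{0\}}$ lies above $\gamma^*$, using that its image misses $\MT(\gamma)$ by conclusion (1). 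Rewriting your last step in this curve-plus-Lemma-\ref{above and below} form, and either dropping the bounded-transversal claim in favor of the paper's shared-curve modification or otherwise handling markings with unbounded transversals, would close the gaps.
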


\begin{proof} We first extend $\alpha$ to a  pants decomposition $P_\alpha$ 
each of whose curves has length at most $L'=L'(L)$. We do so by considering
a pleated surface \hbox{$f:X\to N$} realizing $\alpha$ and letting $P_\alpha$ be a minimal
length pants decomposition of $X$ which contains $\alpha$.
We similarly extend $\beta$ to a pants decomposition $P_\beta$ 
each of whose curves has length at most $L'=L'(L)$.

Construct a hierarchy $H$ whose initial and terminal markings are
$P_\alpha$ and $P_\beta$, respectively. Section 7 of \cite{ELC1}, as
well as the main theorem of Bowditch \cite{bowditch}, give us an uniform upper
bound on the lengths of all curves  in $\CH$. 

Choose $K_1>K(S,\ep_h,L)$, where $K(S,\ep_h,L)$ is the constant from Theorem \ref{kgccfact}, 
so that, with our assumptions, $\ell_\rho(\gamma) < \ep_h$. Moreover, we may choose
$K_1>A(S)$ where $A(S)$ is the constant from Lemma \ref{large link}, so that
$\gamma$ must lie in $\CH$.

We assume, for the moment, that $P_\alpha$ and $P_\beta$ have no
curves in common. The construction in Section 8 of \cite{ELC1} produces 
a ``model manifold'' from the hierarchy $H$. This is a manifold,
equipped with a path metric, homeomorphic to $S\times[0,1]$ minus
the curves $P_\alpha\times\{0\}$ and $P_\beta\times\{1\}$. To each
curve $v$ in $H$ is associated a ``tube'' $U(v)$ which is a solid
torus regular neighborhood of a level curve isotopic to $v$. 
This applies in particular to $\gamma$ and the curves of $P_\alpha$
and $P_\beta$. After removing $\bbar U(v)$ for each curve
$v$ in $P_\alpha$ and $P_\beta$, and taking the closure of what
remains (this has the effect of removing the part of $\boundary U(v)$
that is in the boundary of the model), we obtain a subset $M$ of the model that is homeomorphic to
$S\times[0,1]$, and such that $P_\alpha$ and $P_\beta$ are realized with
uniformly bounded length on its boundary (we again label these curves
$P_\alpha\times\{0\}$ and $P_\beta\times\{1\}$). This is the manifold on
which we will define our map.

The boundary of the tube $U(\gamma)$ is a Euclidean torus, whose geometry
is controlled in terms of the coefficients
$\{d_Y(P_\alpha,P_\beta)\}_{\gamma\subset\boundary Y}$, by Theorem 9.1
(and the discussion in Section 9) in \cite{ELC1}. 
In particular, given our uniform  upper bound on $d_Y(P_\alpha,P_\beta)$ over all
non-annular $Y$ with $\gamma \subset \boundary Y$, we obtain an  uniform upper
bound on the area of $\boundary U(\gamma)$. 
The same theorem gives a lower bound of the form $d_\gamma(\alpha,\beta)/K_2$ for
the length of the meridian of this torus. Together these bounds give us conclusion (2). 

We now note that the construction of a map
from $M$ to $N_\rho$ carried out in Section 10 of \cite{ELC1},
depends only on the length bounds on hierarchy curves. Thus the proof
of the Lipschitz Model Theorem in that section, carried out in our
setting, yields 
a continuous map $f:M\to N_\rho$ such that $f^{-1}(\MT(\gamma)) = U(\gamma)$
(conclusion (3) of that theorem),
takes $\alpha\times\{0\}$ and $\beta\times\{1\}$ to curves of
uniformly bounded length, and is $K_2$-Lipschitz  on $\partial \MT(\gamma)$
(for some uniform choice of $K_2$ depending only on $S$ and $L$).

Since $f(\alpha\times\{0\})$ has bounded length, there exists $\ep_6>0$ so that the ruled homotopy
from $f(\alpha\times \{0\})$ to $\alpha^*$ cannot intersect $\MT_{\ep_6}(\alpha)$.
We again use Theorem \ref{kgccfact} to observe that we may choose
$K_1$ large enough that 
\hbox{$l_\rho(\gamma)<\ep_6/2$}, so that the ruled homotopy is disjoint from $\gamma^*$.
Therefore, 
if $\alpha^*$ lies above (below) $\gamma^*$, then
$f(\alpha\times\{0\})$ lies above (below) $\gamma^*$, which implies, by Lemma \ref{above and below},
that $f|_{S\times \{0\}}$ lies above (below) $\gamma^*$. 
Similarly, if $\beta^*$ lies above (below) $\gamma^*$, then
$f(\beta\times\{1\})$ lies above (below) $\gamma^*$, which implies, by Lemma \ref{above and below},
that $f|_{S\times \{1\}}$ lies above (below) $\gamma^*$. 

Thus we obtain a map satisfying all the conditions of the lemma. This
completes the proof under our assumption that $P_\alpha$ and $P_\beta$
have no common curves.

\medskip

We now explain how to handle the case when $P_\alpha$ and $P_\beta$ do have common curves. Let 
$\Delta$ denote the union of their shared components.

A hierarchy between $P_\alpha$ and $P_\beta$ still exists, and the 
a priori length bounds on the hierarchy curves
are obtained in exactly the same way. The construction of the model,
however, is now slightly different: In \cite{ELC1}, the initial and
terminal markings are allowed to have common components only if at
least one of them comes with a transversal. This is not the case here,
so we must use a variation of the construction.

The construction in \cite{ELC1} proceeds just as before, except at the
stage where a tube is inserted for a component $\delta$ of
$\Delta$. This tube will now be of the form $(annulus)\times\R$, and
so the removal of the tubes associated to $\Delta$ will produce a
model naturally homeomorphic to $R\times [0,1]$, where $R$ is the
(possibly disconnected) surface $S\setminus \collar(\Delta)$.

The construction of the Lipschitz map $f$ proceeds as before on each
component of $R\times[0,1]$. We note that the restriction of the model
map to $R\times\{1\}$ may be extend to a map defined on $S\times\{1\}$
by appending ruled annuli connecting $f(\partial R)$ to $\partial
R^*$.  Since there is an uniform upper bound on the the length of
$f(\partial R)$, there exists $\ep_7>0$, so that these appended annuli
cannot intersect $\MT_{\ep_7}(\gamma)$.  We may assume that $K_1$ is
chosen large enough that \hbox{$l_\rho(\gamma)<\ep_7/2$}, so that a
retraction can be used to adjust the map so that the pre-image of
$\MT(\gamma)$ is just $U(\gamma)$.  We can extend this to a slight
thickening of $S\times\{1\}$, so that our final model is homeomorphic
to $S\times[0,1]$, and again the proof is complete.

\end{proof}

We next establish a lower bound on the Lipschitz constant of a degree zero map between two Euclidean tori,
which depends on the minimal length of a homotopically non-trivial geodesic generating the kernel of
the associated map between the fundamental groups.

\begin{lemma}{lipschitz_torus}
If $T_0$ and $T_1$ are Euclidean tori and $f:T_0 \rightarrow T_1$ is  a Lipschitz  map  such that
the kernel of $f_*:\pi_1(T_0)\to \pi_1(T_1)$ is infinite cyclic and is generated by an element whose 
geodesic representative has length $L$, then
$${\rm Lip}(f)\ge\frac{2\inj(T_1)L}{\Area(T_0)},$$
where ${\rm Lip}(f)$ is the minimal Lipschitz constant for $f$.
\end{lemma}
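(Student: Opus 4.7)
The plan is to lift $f$ to universal covers and exploit the fact that a Lipschitz map invariant along a direction must have differential vanishing in that direction. Write $T_i = \R^2/\Lambda_i$ and let $\tilde f\colon\R^2\to\R^2$ be the $K$-Lipschitz lift, where $K={\rm Lip}(f)$. Let $v\in\Lambda_0$ be the generator of $\ker f_*$; since $\Lambda_1$ is torsion-free, $v$ is primitive, so we may complete it to a $\Z$-basis $\{v,u\}$ of $\Lambda_0$, with $|v|=L$. Set $g=f_*(u)\in\Lambda_1\setminus\{0\}$; hence $|g|\ge 2\inj(T_1)$, since the systole of a flat torus equals twice its injectivity radius. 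The equivariance of the lift gives $\tilde f(x+v)=\tilde f(x)$ and $\tilde f(x+u)=\tilde f(x)+g$.

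The key observation is that at each point where $\tilde f$ is differentiable, which is a.e.\ by Rademacher's theorem, the differential $d\tilde f_x$ annihilates $v$, because $\tilde f$ is constant in the $v$-direction. Decompose $u=u_\parallel+u_\perp$ orthogonally relative to $v$; then $|u_\perp|=W:=\Area(T_0)/L$, and at any such point,
$$|d\tilde f_x(u)| = |d\tilde f_x(u_\perp)| \le K|u_\perp| = KW.$$

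To convert this pointwise bound into a bound on $|g|$, parameterize the fundamental parallelogram by $F(s,t)=\tilde f(sv+tu)$ for $(s,t)\in[0,1]^2$. For each $s$, the curve $t\mapsto F(s,t)$ is Lipschitz, hence absolutely continuous, and equivariance gives $F(s,1)-F(s,0)=g$; the fundamental theorem of calculus then yields
$$g = \int_0^1 \partial_t F(s,t)\,dt.$$
At a.e.\ $(s,t)$, $\partial_t F(s,t)=d\tilde f_{sv+tu}(u)$ has norm at most $KW$ by the previous paragraph. Fubini's theorem implies that for a.e.\ $s$ this bound holds for a.e.\ $t$, so integrating gives $|g|\le KW$. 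Combining with $|g|\ge 2\inj(T_1)$ and $W=\Area(T_0)/L$ yields $K\ge 2\inj(T_1)\,L/\Area(T_0)$, as required.

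The main subtlety is purely measure-theoretic: I need the Rademacher differentiability set of $\tilde f$ on $\R^2$ to meet a.e.\ line in the $u$-direction, which is exactly what Fubini delivers once the full-measure set is established, while absolute continuity of the one-dimensional restrictions justifies the FTC. These steps are standard but must be handled cleanly. The real content of the proof is geometric: the derivative of $\tilde f$ has rank at most one and is "blind" to the $v$-direction, so the transverse width $W$, rather than the possibly much larger length $|u|$, is what controls $|g|$.
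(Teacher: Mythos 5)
There is a genuine gap at the heart of your argument: the claim that $d\tilde f_x$ annihilates $v$ at a.e.\ point ``because $\tilde f$ is constant in the $v$-direction.'' The lift is not constant in that direction, only \emph{periodic}: equivariance gives $\tilde f(x+v)=\tilde f(x)$, but says nothing about $\tilde f(x+sv)$ for $0<s<1$, and $\tilde f$ may oscillate within each period. For a concrete counterexample take $\Lambda_0=\Z^2$, $v=(1,0)$, $u=(0,1)$, and $\tilde f(s,t)=(\sin 2\pi s,\,0)+tg$ for a primitive $g\in\Lambda_1$: this descends to a Lipschitz map whose kernel is exactly $\langle v\rangle$, yet $d\tilde f(v)=\partial_s\tilde f\neq 0$ almost everywhere. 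Once the annihilation claim is dropped, your pointwise estimate degrades to $|\partial_t F(s,t)|\le K|u|$, and the integral only gives $|g|\le K|u|$. That is far too weak: $|u|$ is at least $\sqrt{|u_\perp|^2+|u_\parallel|^2}$, and in the intended application (the boundary torus of a deep Margulis tube) the kernel length $L$ is enormous while $\Area(T_0)$ is bounded, so $|u|$ is of order $L$ rather than $W=\Area(T_0)/L$, and the resulting lower bound on ${\rm Lip}(f)$ does not yield $2\inj(T_1)L/\Area(T_0)$.

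The strategy itself is sound and can be repaired. Within your framework, average over $s$ as well: for each fixed $t$, periodicity gives $\int_0^1 d\tilde f_{sv+tu}(v)\,ds=\tilde f(v+tu)-\tilde f(tu)=0$, so writing $u=u_\perp+cv$ and integrating your identity $g=\int_0^1\partial_t F(s,t)\,dt$ over $s\in[0,1]$ kills the $v$-component on average and yields $|g|\le K|u_\perp|=KW$ honestly (the same Rademacher--Fubini bookkeeping you already invoke suffices). The paper takes a coarser route that avoids differentiability altogether: the line $\ell_0=\R v$ and its translates $\ell_n=\ell_0+nu$ are parallel at distance $n\Area(T_0)/L$; since $\tilde f$ is $v$-periodic, $\tilde f(\ell_0)$ has bounded diameter, and $\tilde f(\ell_n)=\tilde f(\ell_0)+ng$, so $d(\tilde f(\ell_0),\tilde f(\ell_n))\ge 2\inj(T_1)n-2\diam(\tilde f(\ell_0))$; comparing with $K\cdot n\Area(T_0)/L$ and letting $n\to\infty$ gives the lemma. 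Either repair completes your proof; as written, the pointwise rank-one claim is false and the estimate does not follow.
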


\begin{proof}
Let $a$ be a generator of $\ker( f_*)$. Then $a$ stabilizes a line $\ell_0$ in 
the universal cover $\tilde{T}_0 = \R^2$. Choose $b \in \pi_1(T_0)$ so that $a$ and $b$ generate
and let $\ell_n = b^n(\ell_0)$. Notice that $\Area(T_0) = L d(\ell_0, \ell_1)$. 
It follows that
$$d(\ell_0, \ell_n) = \frac{n\Area(T_0)}{L}.$$

The lift $\tilde{f}: \tilde{T}_0 \rightarrow \tilde{T}_1$ factors through a map to $\tilde{T}_0/\langle a \rangle$ 
and in $\tilde{T}_0/\langle a \rangle$ the image of $\ell_0$ is compact. Notice that
$\diam(\tilde{f}(\ell_0))=\diam(\tilde{f}(\ell_n)).$
Since $f_*(b)$  act as translation on $\tilde{T}_1 = \R^2$ with translation distance at least $ 2\inj(T_1)$,
we see that
$$d(\tilde{f}(\ell_0), \tilde{f}(\ell_n)) \ge 2\inj(T_1)n - 2\diam(\tilde{f}(\ell_0)).$$
Therefore, 
$$\operatorname{Lip}(f) \ge \operatorname{Lip}(\tilde{f}) \ge \frac{\left(2\inj(T_1)n - 2\diam(\tilde{f}(\ell_0))\right)L}{n\Area(T_0)}.$$
Letting $n \rightarrow \infty$ gives the desired estimate.
\end{proof}

\begin{proof}[Proof of Proposition \ref{annulus case}]
We may assume that $d_\gamma(\alpha,\beta)>K_1$,  where $K_1$ is the constant from Lemma \ref{special model},
since otherwise we are done.

Let $f_{\alpha,\beta}$ be the map given by Lemma \ref{special model}
and let  $f: \boundary U(\gamma) \to \boundary\MT(\gamma)$ be the restriction of
$f_{\alpha, \beta}$ to the torus $\boundary U(\gamma)$. Since $f_{\alpha,\beta}|_{S\times \{0\}}$ and
$f_{\alpha,\beta}|_{S\times \{1\}}$ both lie above $\gamma^*$ and have image disjoint from $\MT(\gamma)$, they are homotopic
in the complement of $\MT(\gamma)$.
Lemma \ref{both above} then implies that  $\deg(f) = 0$ and thus  that $\ker(f) \neq \{\id\}$. 
Since $f_{\alpha, \beta}$ is a homotopy equivalence the kernel of $f$ has to be contained in the kernel of the inclusion of 
$\boundary U(\gamma)$ in $S \times [0,1]$. The kernel of this second map is generated by the meridian of $\boundary U(\gamma)$ and therefore $\ker(f)$ is generated by a power of the meridian.

By Lemma \ref{special model} there exists a constant $K_2>0$ such that  $\boundary U(\gamma)$ is an Euclidean torus with area bounded above by $K_2$, the length of the meridian is bounded below by $d_\gamma(\alpha,\beta)/K_2$ and $f$ is a 
$K_2$-Lipschitz map. The boundary of the Margulis tube $\MT(\gamma)$ is also a Euclidean torus and its injectivity radius
is uniformly bounded below by some constant $C_1$.
We then apply
Lemma \ref{lipschitz_torus} to conclude that
$$K_2> \frac{2C_1(d_\gamma(\alpha,\beta)/K_2)}{K_2} =\frac{ 2C_1d_\gamma(\alpha,\beta)}{(K_2)^2}.$$
Rearranging we have 
$$d_\gamma(\alpha,\beta)< \frac{(K_2)^3}{2C_1}$$
which completes the proof.
\end{proof}

\end{document}